\documentclass[11pt,a4paper]{article}
\usepackage{amsmath}
\usepackage{amsthm}
\usepackage{amssymb}
\usepackage{amscd}
\usepackage{graphicx}
\usepackage{epsfig}
\usepackage[matrix,arrow,curve]{xy}
\usepackage{color}
\usepackage{mathrsfs}
\usepackage[scr=rsfs,cal=boondox]{mathalpha}

\usepackage{bbm}
\usepackage{stmaryrd}
\usepackage{hyperref}

\usepackage{latexsym}
\usepackage{amsfonts}
\input xy

\newtheorem{theorem}{Theorem}
\newtheorem{proposition}[theorem]{Proposition}

\newtheorem{corollary}[theorem]{Corollary}

\theoremstyle{definition}
\newtheorem{example}[theorem]{Example}
\newtheorem{remark}[theorem]{Remark}
\newtheorem{definition}[theorem]{Definition}


\font\black=cmbx10 \font\sblack=cmbx7 \font\ssblack=cmbx5 \font\blackital=cmmib10  \skewchar\blackital='177
\font\sblackital=cmmib7 \skewchar\sblackital='177 \font\ssblackital=cmmib5 \skewchar\ssblackital='177
\font\sanss=cmss11 \font\ssanss=cmss9 scaled 900 \font\sssanss=cmss8 scaled 600 \font\blackboard=msbm10
\font\sblackboard=msbm7 \font\ssblackboard=msbm5 \font\caligr=eusm10 \font\scaligr=eusm7 \font\sscaligr=eusm5

\font\bsymb=cmsy10 scaled\magstep2
\def\all#1{\setbox0=\hbox{\lower1.5pt\hbox{\bsymb
       \char"38}}\setbox1=\hbox{$_{#1}$} \box0\lower2pt\box1\;}
\def\exi#1{\setbox0=\hbox{\lower1.5pt\hbox{\bsymb \char"39}}
       \setbox1=\hbox{$_{#1}$} \box0\lower2pt\box1\;}

\def\tx#1{{\fam0\relax#1}}

\newfam\bifam
\textfont\bifam=\blackital \scriptfont\bifam=\sblackital \scriptscriptfont\bifam=\ssblackital

\newfam\blfam
\textfont\blfam=\black \scriptfont\blfam=\sblack \scriptscriptfont\blfam=\ssblack

\newfam\bbfam
\textfont\bbfam=\blackboard \scriptfont\bbfam=\sblackboard \scriptscriptfont\bbfam=\ssblackboard

\newfam\ssfam
\textfont\ssfam=\sanss \scriptfont\ssfam=\ssanss \scriptscriptfont\ssfam=\sssanss
\def\sss#1{{\fam\ssfam\relax#1}}

\newfam\clfam
\textfont\clfam=\caligr \scriptfont\clfam=\scaligr \scriptscriptfont\clfam=\sscaligr

\def\pmb#1{\setbox0\hbox{${#1}$} \copy0 \kern-\wd0 \kern.2pt \box0}
\def\pmbb#1{\setbox0\hbox{${#1}$} \copy0 \kern-\wd0
      \kern.2pt \copy0 \kern-\wd0 \kern.2pt \box0}
\def\pmbbb#1{\setbox0\hbox{${#1}$} \copy0 \kern-\wd0
      \kern.2pt \copy0 \kern-\wd0 \kern.2pt
    \copy0 \kern-\wd0 \kern.2pt \box0}
\def\pmxb#1{\setbox0\hbox{${#1}$} \copy0 \kern-\wd0
      \kern.2pt \copy0 \kern-\wd0 \kern.2pt
      \copy0 \kern-\wd0 \kern.2pt \copy0 \kern-\wd0 \kern.2pt \box0}
\def\pmxbb#1{\setbox0\hbox{${#1}$} \copy0 \kern-\wd0 \kern.2pt
      \copy0 \kern-\wd0 \kern.2pt
      \copy0 \kern-\wd0 \kern.2pt \copy0 \kern-\wd0 \kern.2pt
      \copy0 \kern-\wd0 \kern.2pt \box0}


\mathchardef\za="710B  
\mathchardef\zb="710C  
\mathchardef\zg="710D  
\mathchardef\zd="710E  
\mathchardef\zve="710F 
\mathchardef\zz="7110  
\mathchardef\zh="7111  
\mathchardef\zvy="7112 
\mathchardef\zi="7113  
\mathchardef\zk="7114  
\mathchardef\zl="7115  
\mathchardef\zm="7116  
\mathchardef\zn="7117  
\mathchardef\zx="7118  
\mathchardef\zp="7119  
\mathchardef\zr="711A  
\mathchardef\zs="711B  
\mathchardef\zt="711C  
\mathchardef\zu="711D  
\mathchardef\zvf="711E 
\mathchardef\zq="711F  
\mathchardef\zc="7120  
\mathchardef\zw="7121  
\mathchardef\ze="7122  
\mathchardef\zy="7123  
\mathchardef\zf="7124  
\mathchardef\zvr="7125 
\mathchardef\zvs="7126 
\mathchardef\zf="7127  
\mathchardef\zG="7000  
\mathchardef\zD="7001  
\mathchardef\zY="7002  
\mathchardef\zL="7003  
\mathchardef\zX="7004  
\mathchardef\zP="7005  
\mathchardef\zS="7006  
\mathchardef\zU="7007  
\mathchardef\zF="7008  
\mathchardef\zW="700A  

\newcommand{\be}{\begin{equation}}
\newcommand{\ee}{\end{equation}}
\newcommand{\raa}{\rightarrow}

\newcommand{\bea}{\begin{eqnarray}}
\newcommand{\eea}{\end{eqnarray}}
\newcommand{\beas}{\begin{eqnarray*}}
\newcommand{\eeas}{\end{eqnarray*}}
\def\*{{\textstyle *}}

\newcommand{\ot}{\otimes}

\newcommand{\pa}{\partial}
\newcommand{\ti}{\times}

\def\ran{\rangle}

\def\cA{{\cal A}}
\def\cD{{\cal D}}

\def\cL{{\cal L}}

\def\cO{{\cal O}}
\def\cU{{\cal U}}

\def\cV{\mathcal{V}}

\def\Sec{\sss{Sec}}



\def\sA{{\sss A}}

\def\sJ{{\sss J}}

\def\sT{{\sss T}}
\def\sV{{\sss V}}

\def\sv{{\sss v}}

\def\sj{{\sss j}}

\def\xd{\tx{d}}

\def\dt{\xd_{\sss T}}
\def\dts{\xd_{\sss T^*}}

\newdir{|>}{%
!/4.5pt/@{|}*:(1,-.2)@^{>}*:(1,+.2)@_{>}}
\newdir{ (}{{}*!/-5pt/@^{(}}







\newcommand{\la}{\langle}

\newcommand{\Z}{\mathbb{Z}}
\newcommand{\R}{\mathbb{R}}

\newcommand{\n}{\nabla}

\newcommand{\op}[1]{\!\!\mathop{\rm ~#1}\nolimits}

\newcommand{\id}{\op{id}}

\def\Rt{{\R^\ti}}
\def\Lt{{L^\ti}}

\def\mh{\mathcal{h}}


\newcommand{\we}{\wedge}

\tolerance=500 \textwidth16cm \textheight26cm \hoffset-1.2cm \voffset-2.5cm


\begin{document}
\title{\bf Contact geometric mechanics:\\ the Tulczyjew triple}
\date{}
\author{\\ Katarzyna  Grabowska$^1$\\ Janusz Grabowski$^2$ 
        \\ \\
         $^1$ {\it Faculty of Physics}\\
                {\it University of Warsaw}\\
                \\$^2$ {\it Institute of Mathematics}\\
                {\it Polish Academy of Sciences}
                }
\maketitle

\begin{abstract}\noindent We propose a generalization of the classical Tulczyjew triple as a geometric tool in Hamiltonian and Lagrangian formalisms which serves for contact manifolds. The r\^ole of the canonical symplectic structures on cotangent bundles in the Tulczyjew's case is played by the canonical contact structures on the bundles $\sJ^1L$ of first jets of sections of line bundles $L\to M$. Contact Hamiltonians and contact Lagrangians are understood as sections of certain line bundles, and they determine (generally implicit) dynamics on the contact phase space $\sJ^1L$. We study also a contact analog of the Legendre map and the Legendre transformation of generating objects in both contact formalisms. Several explicit examples are offered.

\medskip\noindent
{\bf Keywords:} contact structures; symplectic structures; principal bundles; jet bundles; Hamilton formalism; Lagrangian formalism; Legendre map.
\par

\smallskip\noindent
{\bf MSC 2020:} 53D10; 53D35;  35F21; 70H20; 70G45; 70S05.
\end{abstract}

\tableofcontents

\maketitle


\section{Introduction}
The classical \emph{Tulczyjew triple} is the following commutative diagram of canonical vector bundle morphisms:
\begin{equation}\label{e:0}\xymatrix{
\sT^\ast\sT^\ast M\ar[rd]_{\pi_{\sT^\ast M}} & & \sT\sT^\ast M\ar[ll]_{\beta_M}\ar[rr]^{\alpha_M}\ar[ld]^{\tau_{\sT^\ast M}}\ar[rd]_{\sT\pi_M} & & \sT^\ast\sT M\ar[ld]^{\pi_{\sT M}}  \\
 & \sT^\ast M\ar[rd]_{\pi_M} & & \sT M\ar[ld]^{\tau_M} & \\
 & & M & & .
}
\end{equation}
The top line consists of isomorphisms of \emph{double vector bundles} (for double vector bundles we refer to \cite{Grabowski:2009,Konieczna:1999,Pradines:1974}). These double vector bundles are canonically symplectic manifolds and $\za_M$ is simultaneously a symplectomorphism, while $\zb_M$ is an anti-symplectomorphism (the latter depends on convention). Starting with local coordinates $(x^i)$ in $M$, we obtain the adapted local coordinates in all parts of the triple, in which the maps $\zb_M$ and $\za_M$ read
$$\zb_M(x^i,p_j,\dot x^k,\dot p_l)=(x^i,p_j,-\dot p_k,\dot x^l)\,,\quad
\za_M(x^i,p_j,\dot x^k,\dot p_l)=(x^i,\dot x^j,\dot p_k,p_l).
$$
The composition
\begin{align}\label{ci}R_{\sT M}=\zb_M\circ \za_M^{-1}:\sT^*\sT M\to\sT^*\sT^*M\,,\\
R_{\sT M}(x^i,\dot x^j,a_k,b_l)=(x^i,b_j,-a_k,\dot x^l),\nonumber
\end{align}
is also a canonical isomorphism of double vector bundles, but this time it has an important generalization for any vector bundle $E\to M$ replacing $\sT M\to M$,
$$R_{E}:\sT^*E\to\sT^*E^*\,.$$
The latter is fundamental for the construction of geometric mechanics on Lie algebroids \cite{Grabowska:2008,Grabowska:2006}, and the proof is completely analogous to the Tulczyjew's proof for $E=\sT M$.

The importance of the Tulczyjew triple in analytical mechanics comes from the fact that it is a very elegant and useful tool in Lagrangian and Hamiltonian formalisms, including non-regular Lagrangians. Here, $M$ is a configuration manifold for a physical system, $\sT^\ast M$ is the phase space of the system, and $\sT M$ is the space of infinitesimal configurations involving positions and velocities. In this triple, the right-hand side (associated with $\sT^\ast\sT M$) is Lagrangian, the left-hand side (associated with $\sT^\ast\sT^\ast M$) is Hamiltonian, and the (generally implicit) dynamics lives {in} $\sT\sT^*M$.

In Tulczyjew's approach, this dynamics (the phase equation) is an implicit first-order differential equation on the phase space, represented by a Lagrangian submanifold $\cD$ of $\sT\sT^* M$. The latter is canonically a symplectic manifold equipped with the tangent lift $\dt(\zw_M)$ of the canonical symplectic form $\zw_M$ on $\sT^*M$. The fact that the dynamics is \emph{implicit} means that $\cD$ is generally not the image of a vector field on the phase space $\sT^*M$.

A simple way to obtain the phase dynamics is to start with a Lagrangian function $L:\sT M\to\R$, then
 $$\mathcal{D}=\alpha_M^{-1}\left(\xd L(\sT M)\right),$$
or with a Hamiltonian function $H:\sT^\ast M\to\R$, then
$$\mathcal{D}=\beta_M^{-1}(\xd H(\sT^\ast M)).$$
Of course, if the dynamics is generated by a Hamiltonian, then it is explicit, i.e., $\cD$ is the image of a Hamiltonian vector field $X_H$ on $\sT^*M$, $i_{X_H}\zw_M=-\xd H$. If the dynamics is generated by a Lagrangian, it may be implicit, since singular Lagrangians are fully accepted in this formalism. In some cases (for regular Lagrangians), the dynamics can be described simultaneously by a Lagrangian and a Hamiltonian, and we can obtain one from the other by means of the \emph{Legendre transformation}. In the case of implicit dynamics, we may still look for a Hamiltonian generating object, but then it is not a single function on the whole phase space, but a function on a submanifold or a family of functions (Morse family). The well-known formula
$$H=\langle p,v\rangle-L\,,$$
relating Hamiltonians (Morse families) with Lagrangians comes from composing generating objects for symplectic relations (see e.g. \cite{Benenti:2011}). Several examples of physical systems with various Lagrangian and Hamiltonian generating objects may be found in \cite{Tulczyjew:1999}.

Note that diagram (\ref{e:0}) is interesting also from a purely mathematical point of view. All three iterated tangent and cotangent bundles are examples of \emph{double vector bundles} which are equipped additionally with symplectic structures. The maps $\alpha_M$ and $\beta_M$ are \emph{double vector bundle morphisms} and they respect the symplectic structures; depending on conventions they are symplectomorphisms or anti-symplectomorphisms. The Hamiltonian side of the triple is built upon the canonical symplectic structure of the phase space $\sT^\ast M$, while the Lagrangian side is related to the canonical Lie algebroid structure of the tangent bundle (which again is another instance of the canonical symplectic structure on $\sT^*M$).

The origins of the Tulczyjew triple go back to the 70s' of the 20th century. The triple appeared in Tulczyjew papers \cite{Tulczyjew:1974, Tulczyjew:1976a, Tulczyjew:1976b, Tulczyjew:1977} and book \cite{Tulczyjew:1989}. For the complete theory of Legendre transformation for singular systems, we refer to \cite{Tulczyjew:1999}. The interest in the Tulczyjew triple was renewed recently after considerable developments in the theory of double vector bundles, defined first by Pradines \cite{Pradines:1974} and then developed in the context of analytical mechanics by Konieczna and Urba\'nski in \cite{Konieczna:1999}. A substantial simplification in the understanding of double vector bundles one can find in \cite{Grabowski:2009} and \cite{Grabowski:2012}.

The ideas originated in articles of Weinstein \cite{Weinstein:1996} and Libermann \cite{Libermann:1996} and concerning mechanics on Lie algebroids and Lie groupoids were developed in two distinct proposals for mechanics on Lie algebroids. One of them, by Mart\'{\i}nez, de Leon, and other members of the Spanish School, uses the concept of a Lie algebroid prolongation and follows the traditional paths of geometric mechanics \cite{deLeon:2005,Martinez:2001}. The other, by Grabowska, Grabowski, and Urba\'nski, uses the concept of the Tulczyjew triple, adapted to an algebroid as a certain double vector bundle morphism \cite{Grabowska:2006, Grabowska:2008,Grabowska:2011}. A simplified version of the Tulczyjew triple for a Lie algebroid is that for mechanics on a Lie algebra, which is deduced from the triple on a Lie group \cite{Grabowska:2016}. There are also affine triples for time-dependent mechanics \cite{Grabowska:2004}, and mechanics in Newtonian space-time \cite{Grabowska:2006a}, as well as versions of the triple for field theories, including higher-order theories \cite{Grabowska:2010,Grabowska:2012,Grabowska:2013,Grabowska:2015}.

Note that Hamiltonian formalisms are well-suited only for isolated systems with reversible dynamics. External forces can be included, especially in Lagrangian formulation, but mathematically this takes us out of the language of Lagrangian submanifolds and their generating objects. A natural question was then if it {was} possible to construct a theory with most of the geometric advantages of Hamiltonian mechanics, but with wider applications, e.g., to dissipative systems.
Hence, a major goal, at least in some areas of physics, is that of finding generalizations of Hamilton equations that apply to systems exchanging energy with the environment.
A nice model of such a theory is provided by the contact geometry. Actually, the roots of contact geometry are related to physics, e.g., to Gibbs’ work on thermodynamics, Huygens’ work on geometric optics, Hamiltonian dynamics, fluid mechanics, etc.

There is extensive literature on contact Hamiltonian systems,  e.g., \cite{Bravetti:2017,Bravetti:2017a, Ciaglia:2018,Cruz:2018,deLeon:2017,deLeon:2019,deLeon:2020,deLeon:2021,deLeon:2021a,Gaset:2020} to mention only some recent papers. They deal with various aspects of contact mechanics, including systems with constraints and time-dependent systems. Some papers include Lagrangian formalism, but mainly only in the Hamiltonian context (regular Lagrangians). There are very few publications trying to deal with singular systems. For instance, one attempt to develop a concept of a contact Tulczyjew triple can be found in \cite{Esen:2021}. Mathematically, most of these works have one disadvantage: they use only trivial (co-oriented) contact structures for which a global contact form is chosen. If we have a manifold $Q$ with a global contact form $\eta$, then we can define the Hamiltonian contact vector field $X^c_H$ for a function $H:Q \rightarrow\R$ as uniquely determined by the formulae
\begin{equation}\label{e:0.1}i_{X^c_H}\eta=-H, \qquad i_{X^c_H}d\eta=\xd H-R_\eta(H)\eta,\end{equation}
where $R_\eta$ is the {\it Reeb vector field} for $\eta$.
On the other hand, for nontrivial contact manifolds only local contact forms exist, and they are determined only up to a conformal equivalence. Even if the structure is trivializable (a global contact form exists), changing the contact form by a conformal factor results in different Hamiltonian contact vector fields associated with a given Hamiltonian so that formulae (\ref{e:0.1}) do not have geometrical character if only a global contact form is not explicitly provided. Note that examples of nontrivial contact structures are such important cases as the canonical contact structures on first jet bundles $Q=\sJ^1 L$, where $L$ is a non-trivial line bundle, or $Q=\mathbb{P}\,\sT^\ast M$, i.e., projectivized cotangent bundles for odd dimensions of $M$.

In \cite{Grabowska:2022}  we have developed a completely different picture for contact Hamiltonian formalism (and contact Hamilton-Jacobi theory) which is intrinsically geometric and valid for general contact structures. Our approach is based on a crucial observation (see \cite{Bruce:2017,Grabowski:2013}) that contact structures are, in fact, not odd-dimensional variants of symplectic structures but rather special examples of symplectic structures. There is a one-to-one canonical correspondence between contact structures (i.e., maximally non-integrable hyperplanes on odd dimensional manifolds $Q$) and certain homogeneous symplectic structures on principal bundles $\zt:P\to Q$ with the structure group $\Rt$ of multiplicative nonzero reals. This is a topologically nontrivial version of the \emph{symplectizations} of contact forms.
In this picture, contact Hamiltonians are not functions on $Q$ but 1-homogeneous functions $H$ on $P$ (or, equivalently, sections of a certain line bundle over $Q$). The Hamiltonian vector fields $X_H$ defined on $P$ are projectable to $Q$, producing the corresponding contact Hamiltonian vector fields $X^c_H$ on $Q$.

This approach can be extended, almost directly, to Jacobi structures, understood in full generality as local Lie brackets on sections of line bundles, called sometimes \emph{Kirillov brackets}. Instead of symplectic $\Rt$-bundles, we have to work with \emph{Poisson $\Rt$-bundles}, called also \emph{Kirillov manifolds} (see e.g. \cite{Bruce:2017,Grabowski:2013,Le:2018}). We can develop a Hamiltonian picture exactly like in the contact case \cite{Zapata:2020}. Of course, the Lagrangian picture in the Poisson case is not so clearly defined. A solution is to use the corresponding Lie algebroid structure on the cotangent bundle of a Poisson manifold and to develop the mechanics on Lie algebroids.

In the present paper, we extend this contact Hamiltonian formalism by a contact Lagrangian formalism represented by a contact Tulczyjew triple. Let us first stress that the original Tulczyjew triple is fundamentally related to linear symplectic geometry. It is well known that all linear symplectic structures on vector bundles over $M$ are equivalent to $(\sT^*M,\zw_M)$. The map $\beta_M$ in the triple is an instance of the canonical symplectic structure on $\sT^\ast M$. The dynamics is a Lagrangian submanifold for the canonical linear symplectic structure on $\sT\sT^*M$, and the canonical isomorphism between $\sT^*\sT^\ast M $ and $\sT^\ast\sT M$, that makes the Legendre transformation possible, is also an isomorphism of linear symplectic structures.

For contact structures, an analogous result \cite{Grabowski:2013} states that any linear contact structure on a vector bundle over $M$ is equivalent to the canonical contact structure on the vector bundle $\sJ^1L\to M$ of first jets of sections of a line bundle $L\to M$. It is then natural to expect that in the contact Tulczyjew triples the r\^ole of $\sT^*M$ with its canonical linear symplectic structure will be played by the linear contact structures on $\sJ^1L$.

A brief summary of the symplectic approach to contact structures is given in sections \ref{sec:2} and \ref{sec:3} with a focus on the canonical contact structure of $\sJ^1 L$, which is a fundamental contact structure of the contact Tulczyjew triple. In section \ref{sec:ham} we discuss Hamiltonian mechanics for contact systems, while in section \ref{sec:lag} we propose an approach to Lagrangian mechanics on contact manifolds which is different from that present in the literature. In section \ref{sec:trip} we determine the concept of a contact Tulczyjew triple and discuss the structure of all the bundles involved. Note that the contact Tulczyjew triple we propose is essentially different from that present in \cite{Esen:2021} for trivial contact structures. There are also some resemblances to affine geometric mechanics as described by Urba\'{n}ski \cite{Urbanski:2003}, but affine bundles are trivializable, so topological obstructions do not appear.

Note finally that the present paper is mainly devoted to developing the concept of contact Tulczyjew triples and basic studies on their geometry. We hope to continue working on the subject with more emphasis on applications and physics-motivated examples.

\section{Line and $\R^\ti$-principal bundles}\label{sec:2}

The content of this section is a summary of the relevant material from \cite{Grabowski:2013} and \cite{Grabowska:2022} with a few additions. Vector bundles with one-dimensional fibers will be called \emph{line bundles}. If $\rho: L\rightarrow M$ is a line bundle over a manifold $M$, then the submanifold $L^\times\subset L$ of nonzero vectors, $L^\times=L\setminus 0_M$, is canonically a principal bundle over $M$ with the structure group $(\R^\times, \cdot)$, i.e., the group of nonzero reals with multiplication. The $\R^\times$-action on $L^\times$ comes from the multiplication by reals in $L$.

Conversely, if we start with an $\Rt$-principal bundle $\pi: P\rightarrow M$, we can construct the line bundle $L_P\to M$ as the vector bundle associated with the standard $\Rt$-action on $\R$,
$$\R^\times\times\R\ni(s,r)\mapsto sr\in\R.$$
This means that elements of $L_P$ are the equivalence classes $[(p,r)]$ of pairs $(p,r)\in P\times \R$,
$$[(p,r)]=\{(h_s(p),r/s),\; s\in\R^\times\},$$
where $h$ denotes the action of $\R^\times$ on $P$,
$$h:\R^\times \times P\rightarrow P\,,\quad h(s,p)=h_s(p).$$
A fundamental observation in this context is the following.
\begin{proposition}\label{prop:6}
There is a canonical principal bundle isomorphism between $L_P^{\times}$ and $P$. In particular, the line bundle $L_P$ is trivializable if and only if the principal bundle $P$ is trivializable.
\end{proposition}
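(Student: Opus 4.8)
The plan is to write down an explicit canonical map and then verify, step by step, that it is an isomorphism of principal $\Rt$-bundles over $M$. The natural candidate is the map $\Psi\colon L_P^{\times}\to P$ defined on representatives by $\Psi([(p,r)])=rp$, where $rp$ denotes the principal $\Rt$-action of $r\in\Rt$ on $p\in P$; note that on $L_P^{\times}$ we have $r\neq 0$, so $r$ is a genuine group element and $rp$ is defined. First I would check that $\Psi$ is well defined, i.e. constant on equivalence classes: replacing $(p,r)$ by $(sp,r/s)$ gives $(r/s)(sp)=((r/s)s)p=rp$, using only that $\Rt$ is a group under multiplication. The inverse is the equally canonical $\Phi\colon P\to L_P^{\times}$, $\Phi(p)=[(p,1)]$, and I would confirm $\Psi\circ\Phi=\id_P$ and $\Phi\circ\Psi=\id_{L_P^{\times}}$ by the two short identities $\Psi([(p,1)])=p$ and $[(rp,1)]=[(p,r)]$, the latter obtained from the equivalence relation with $s=1/r$.

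Next I would dispatch the structural properties. That $\Psi$ covers the identity on $M$ is immediate, since the action is fibrewise and hence $\pi(rp)=\pi(p)$, which is exactly the base point of $[(p,r)]$. Smoothness is also routine: $\Psi$ is induced by the smooth action map $P\times\Rt\to P$, $(p,r)\mapsto rp$, descending through the surjective submersion $P\times\Rt\to L_P^{\times}$, while $\Phi$ is manifestly smooth; hence $\Psi$ is a diffeomorphism.

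The one place that requires care --- and which I expect to be the only genuine subtlety --- is $\Rt$-equivariance, because the associated-bundle relation $[(p,r)]=[(sp,r/s)]$ carries an inverse twist in the second factor. The principal action on $L_P^{\times}$ is scalar multiplication inside $L_P$, namely $t\cdot[(p,r)]=[(p,tr)]$. I would then compute $\Psi(t\cdot[(p,r)])=\Psi([(p,tr)])=(tr)p=t(rp)=t\,\Psi([(p,r)])$, so that the twist is precisely absorbed by the defining formula $rp$ and $\Psi$ is equivariant. (Equivalently, one records the identity $[(p,t)]=[(tp,1)]$, which says that $\Phi$ intertwines the two actions.) Combined with the previous paragraph, this shows that $\Psi$ is an isomorphism of principal $\Rt$-bundles, proving the first assertion; keeping consistent track of where the scalar acts after the twist is essentially the whole content of the argument, the rest being formal.

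Finally, the ``in particular'' clause follows with no extra work. For any line bundle, trivializability is equivalent to the existence of a nowhere-vanishing section, which is the same datum as a global section of the complement of the zero section regarded as a principal $\Rt$-bundle; and a principal bundle is trivializable exactly when it admits a global section. Transporting global sections across the isomorphism $\Psi$ between $L_P^{\times}$ and $P$, I conclude that $L_P$ is trivializable if and only if $L_P^{\times}$ is, if and only if $P$ is.
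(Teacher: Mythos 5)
Your proof is correct and is essentially the paper's own argument: the paper defines the same canonical isomorphism via $p\mapsto[(p,1)]$ and verifies equivariance through the identity $h_s(p)=[(h_s(p),1)]=[(p,s)]=s[(p,1)]$, which is exactly your map $\Psi$ read in the opposite direction. You simply spell out more explicitly the well-definedness, smoothness, and the section-transport argument for the ``in particular'' clause, all of which the paper leaves implicit.
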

\begin{proof}
By construction, the vector bundle structure of $L_P$ comes from the vector space structure of $\R$ (the multiplication by reals completely determines the vector bundle structure \cite{Grabowski:2009}),
$$\lambda[(p,r)]=[(p,\lambda r)]\quad\text{for}\quad \zl\in\R.$$
If $s\neq 0$, then in the class $[(p,s)]$ there is a distinguished element, namely $(h_s(p),1)$. This defines an inclusion $P\ni p\mapsto [(p,1)]\in L_P$ which yields an isomorphism of the principal bundle $P$ with $L^\ti_P$,
$$h_s(p)=[(h_s(p),1)]=[(p,s)]=s[(p,1)].$$

\end{proof}

The above isomorphism actually means that by gluing $L_P$ and $P$ out of local trivial bundles we get the same transition functions. Let us observe also that elements of $L_P$ are in a one-to-one correspondence with homogeneous functions of degree $-1$ on the appropriate fiber of $P$, i.e., functions $f:P_{\pi(p)}\to\R$ such that $f(h_s(p))=s^{-1}f(p)$. The homogeneous function $f$ associated with $[(p,r)]$ is $f(h_s(p))=s^{-1}r$. Sections of $L_P$ are therefore in a one-to-one correspondence with homogeneous functions of degree $-1$ on $P$.

The dual bundle $L^\ast_P$ can be characterized as the vector bundle associated with the opposite action of $\Rt$ on $\R$,
$$\R^\times\times \R\ni(s,r)\mapsto r/s\in\R.$$
Elements of $L^\ast_P$ are then equivalence classes of pairs $(p,q)\in P\times\R$, $$[(p,q)]=\{(h_s(p),sq),\; s\in\R^\times\},$$
and it is easy to see that each equivalence class $[(p,q)]$ corresponds to a homogeneous function of degree $1$ on the fibre $P_{\pi(p)}$. Consequently, (local) sections of $L^\ast_P$ correspond to (local) $1$-homogeneous functions on $P$. If $\sigma$ is such a section, then the corresponding 1-homogeneous function we will denote $\zi_\zs$.

\begin{example}\label{ex:4}
Probably the simplest example of a line bundle that is not trivializable is that of the M\"obius band. In the trivial line bundle $\R^2\ni(x,y)\mapsto x\in\R$ we consider a linear $\Z$-action given by $$k.(x,y)=\left(x+k,(-1)^ky\right).$$
This action defines a line bundle structure on $B=\R^2/\Z$ over $S^1=\R/\Z$, where $\Z$ acts on $\R$ by translations, together with a vector bundle surjection $\R^2\to B$ covering the canonical projection $\R\to\R/\Z=S^1$. Elements of $B$ are equivalence classes $[(x,y)]$ defined by orbits of $\Z$. Suppose that there is a non-vanishing smooth section $\zs:S^1\to B$. This section has a unique smooth representative $(x,\zs_1(x))\in[0,1[\;\ti\Rt$ and a unique smooth representative $(x,\zs_2(x))\in\;]0,1]\ti\Rt$. Hence, $\zs_1$ coincides with $\zs_2$ on $]0,1[$. But $(0,y)\sim(1,-y)$, so that $\zs_2(1)=-\zs_1(0)\ne 0$; a contradiction.
Of course, in consequence, the principal bundle $B^\ti$ is not trivializable.
\end{example}

\subsection{Lifted $\Rt$-actions}
The $\Rt$-action $h:\Rt\ti P\to P$ on the $\Rt$-principal bundle $\pi:P\to M$ can be lifted to $\sT P$ and $\sT^\ast P$. In the first case, applying the tangent functor to $h_s$, we obtain $\dt h$ defined by
\be\label{tl}({\dt}h)_s=\sT h_s:\sT P\rightarrow \sT P,\ee
which is again a principal $\Rt$-action. It is easy to see that the base of the new principal bundle is $\sT P\slash \R^\times\simeq \sA P$, i.e., the so-called \emph{Atiyah algebroid} of the principal bundle $P$. Sections of this Lie algebroid are naturally identified with $\Rt$-invariant vector fields on $P$. Such vector fields on $P$ are projectable on $M$, and $X\mapsto\pi_*(X)$ represents the anchor of the Atiyah algebroid.

Starting from adapted local coordinates $(x^i,\zt)$ in $P$, we construct in the usual way the induced local coordinates $(x^i,\zt,\dot x^j,\dot\zt)$ in $\sT P$. In these coordinates, the lifted action reads
\be\label{la}
({\dt}h)_s(x^i,\zt,\dot x^j,\dot\zt)=(x^i,s\zt,\dot x^j,s\dot\zt).
\ee
Natural coordinates in $\sA P$ are then $(x^i, \dot x^i, t={\dot\zt}/{\zt})$.

Defining the phase lift is a little more tricky (there is no `cotangent functor'), however, since $h_s$  are diffeomorphisms, it can be done easily. The \emph{phase lift} $\dts h$ of the $\Rt$-action $h$ on $P$ is given by the formula
$$({\dts}h)_s(\alpha)=s\cdot(\sT h_{s^{-1}})^\ast(\alpha)$$
for $\alpha\in\sT^\ast P$.
In the induced local coordinates $(x^i,\zt,\pi_j,z)$ on $\sT^\ast P$ this action reads
\begin{equation}\label{e:17}
({\dts}h)_s(x^i,\zt,\pi_j,z)=(x^i,s\zt,s\pi_j,z).
\end{equation}
Again, the lifted action provides $\sT^\ast P$ with the structure of a principal bundle. To determine the base of this bundle, let us go back to the correspondence between 1-homogeneous functions on $P$ and sections of the bundle $L_P^\ast\rightarrow M$. This correspondence can be carried over to a correspondence between first jets of sections of $L^*_P$ and differentials of functions. Indeed, if two sections $\sigma_1$ and $\sigma_2$ of $L_P^\ast$ are such that $\sj^1\sigma_1(x)=\sj^1\sigma_2(x)$, then the two sections differ by the multiplication by a function $\lambda$ (defined on a neighbourhood of $x\in M$) such that $\lambda(x)=1$, $d\lambda(x)=0$,
$$\sigma_2(x)=\lambda(x)\sigma_1(x).$$
The corresponding homogeneous functions on $P$ are in the same relation, i.e., they differ by the multiplication by $\lambda$:
$$\imath_{\sigma_{2}}=\lambda \imath_{\sigma_{1}},$$
with some abuse of notation, since we use $\lambda$ for both, the function on $M$ and its pullback to $P$. Then, of course,
$$\xd\imath_{\sigma_{2}}=\xd(\lambda \imath_{\sigma_{1}})=\lambda \xd\imath_{\sigma_{1}}+\imath_{\sigma_{1}} \xd\lambda,$$
and since $\lambda(x)=1$ and $\xd\lambda(x)=0$, we have $$\xd\imath_{\sigma_{1}}(p)=\xd\imath_{\sigma_{2}}(p)\ \text{for any}\ p\ \text{such that}\ \zp(p)=x.$$

For a local trivialization of $P$, let us now choose local coordinates $(x^i)$ in $M$ and the adapted homogeneous coordinates $(x^i,\zt)$ on the local trivialization of $P$. The corresponding coordinates on $L^\ast_P$ will be denoted $(x^i, z)$. For $\ell\in L^\ast_P$, the vector $(x^i(\ell))$ represents the projection of $\ell$ on $M$, and $z$ is such that $\ell=[(p,z(\ell))]$ if $\zt(p)=1$.  If a section $\sigma$ of the bundle $L^\ast_P\rightarrow M$ is given in coordinates $(x^i, z)$ by a function $f$, more precisely $z(\sigma(x))=f(x^i(x))$, then the homogeneous function $\imath_{\sigma}$ on $P$ reads $\imath_{\sigma}(x^i, \zt)=\zt f(x^i)$. Both, the first jet prolongation of $\sigma$ and the differential of $\imath_\sigma$, are then determined by the values of $f$ and the partial derivatives of $f$. The differential reads
$$\xd\imath_\sigma(x^i,\zt)=f(x^i)\xd\zt+\zt\frac{\partial f}{\partial x^i}\xd x^i,$$
and $\sj^1\sigma$ is given in natural coordinates $(x^i, z, p_j)$ on $\sJ^1 L^\ast_P$ by $\big(x^i, f(x^i), \frac{\partial f}{\partial x^j}\big)$.
Note that
$$\xd\imath_\sigma(h_s(p))=({\dts}h)_s(\xd\imath_\sigma(p)).$$
The projection $\sT^\ast P\rightarrow\sJ^1L^\ast_P$ reads
$$(x^i,\zt,\pi_j,z)\longmapsto (x^i, p_j={\pi_j}/{\zt}, z).$$
The above observations about the correspondence between jets of sections and differentials of 1-homogeneous functions can be formalized as follows.

\begin{proposition}\label{prop:4} The cotangent bundle $\sT^\ast P$ equipped with the $\Rt$-action ${\dts}h$ is a $\R^\times$-principal bundle over the manifold $\sJ^1L^\ast_P$ of first jets of sections of the line bundle $L^\ast_P$.
\end{proposition}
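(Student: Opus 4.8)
The plan is to identify $\sJ^1L^\ast_P$ with the orbit space of the phase-lifted action $\dts h$ on $\sT^\ast P$, to show that this action is free, and that the canonical projection $\sT^\ast P\to\sJ^1L^\ast_P$ is locally trivial with fibres equal to the $\dts h$-orbits. The identification itself is supplied by the correspondence, developed above, between sections $\sigma$ of $L^\ast_P$ and $1$-homogeneous functions $\imath_\sigma$ on $P$.

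First I would record freeness. From the coordinate expression $(\dts h)_s(x^i,\zt,\pi_j,z)=(x^i,s\zt,s\pi_j,z)$ and the fact that $\zt$ is a nonvanishing fibre coordinate on $P=L^\times_P$, the equation $s\zt=\zt$ forces $s=1$; equivalently, freeness is inherited from the free action $h$ on $P$, which $\dts h$ covers. Next I would construct the projection. Every covector $\alpha\in\sT^\ast_p P$ can be written as $\alpha=d\imath_\sigma(p)$ for a suitable section $\sigma$, since in the local model $\imath_\sigma(x^i,\zt)=\zt f(x^i)$ gives $d\imath_\sigma=f\,d\zt+\zt\,\partial_i f\,dx^i$, whose coefficients $z=f$ and $\pi_j=\zt\,\partial_j f$ realize any prescribed covector (here $\zt\ne0$ is essential). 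By the observation above that $\sj^1\sigma_1(x)=\sj^1\sigma_2(x)$ implies $d\imath_{\sigma_1}=d\imath_{\sigma_2}$ along the whole fibre $P_x$, the jet $\sj^1\sigma(x)$ attached to $\alpha$ does not depend on the chosen $\sigma$, and the assignment $\alpha\mapsto\sj^1\sigma(x)$ reproduces the stated surjective submersion $(x^i,\zt,\pi_j,z)\mapsto(x^i,z,p_j=\pi_j/\zt)$.

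It then remains to match orbits with fibres. The inclusion of an orbit into a fibre is exactly the homogeneity identity $d\imath_\sigma(h_s(p))=\dts h_s(d\imath_\sigma(p))$ recorded above, which says that the attached jet is constant along orbits; for the converse, two covectors with the same image lie over points $p$ and $h_s(p)$ of a single fibre $P_x$, and comparison of the $\pi_j$-coordinates shows they differ precisely by $\dts h_s$. Together with freeness this yields the principal-bundle structure, which I would make manifest by an explicit local trivialization: over a trivializing chart $U$ of $P$ the local section $(x^i,z,p_j)\mapsto(x^i,1,p_j,z)$ of the projection, combined with the action, gives the diffeomorphism $\sJ^1L^\ast_P|_U\times\Rt\cong\sT^\ast P|_U$, $\big((x^i,z,p_j),s\big)\mapsto(x^i,s,s\,p_j,z)$.

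The step I expect to demand the most care is the canonical identification of the orbit space with the jet bundle, rather than freeness, which is routine: one must verify that the pointwise differentials of $1$-homogeneous functions along a fibre assemble into exactly the $1$-jet data, with no spurious identifications. Using the explicit trivializations above sidesteps any abstract properness argument for the quotient and simultaneously proves local triviality, so I would organize the proof around them, invoking the homogeneous-function correspondence only to guarantee that these trivializations glue canonically into the global projection $\sT^\ast P\to\sJ^1L^\ast_P$.
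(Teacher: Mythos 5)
Your proof is correct and follows essentially the same route as the paper, which states Proposition \ref{prop:4} as a formalization of the immediately preceding discussion: the correspondence between sections $\sigma$ of $L^\ast_P$ and $1$-homogeneous functions $\imath_\sigma$, the equivariance $d\imath_\sigma(h_s(p))={\dts}h_s(d\imath_\sigma(p))$, and the coordinate projection $(x^i,\zt,\pi_j,z)\mapsto(x^i,p_j=\pi_j/\zt,z)$. What you add — freeness, the two-way matching of ${\dts}h$-orbits with fibres of the projection, and the explicit equivariant local trivializations $\big((x^i,z,p_j),s\big)\mapsto(x^i,s,s\,p_j,z)$ — is exactly the detail the paper leaves implicit, so your write-up is a faithful, more rigorous rendering of the paper's argument rather than a different one.
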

\noindent By lifting the $\Rt$-action on $P$ to $\sT P$ and $\sT^\ast P$, we have obtained two new $\R^\times$-principal bundles, $\sT P\rightarrow \sA P$ and $\sT^\ast P\rightarrow \sJ^1L_P^\ast$. A natural question is now: what are the corresponding line bundles $L_{\sT P}$ and $L_{\sT^\ast P}$? The following proposition gives the answer.

\begin{proposition}\label{prop:5}
The line bundles $L_{\sT P}$ and $L_{\sT^\ast P}$, corresponding to the principal bundles $\sT P$ and $\sT^\ast P$ with $\Rt$-actions ${\dt}h$ and ${\dts}h$, respectively, are
$$L_{\sT P}\simeq \sA P\times_M L_P\qquad \text{and}\qquad L_{\sT^\ast P}\simeq \sJ^1L_P^\ast\times_M L_P.$$
In other words, $L_{\sT P}$ and $L_{\sT^\ast P}$ are the pull-back bundles of $L_P$ with respect to canonical projections $\sA P\to M$ and $\sJ^1L_P^\ast\to M$.
\end{proposition}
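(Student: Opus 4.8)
The plan is to reduce the statement to the naturality of the associated-line-bundle construction. I would first recall, from the observation following Proposition \ref{prop:6}, that an $\Rt$-principal bundle $Q\to N$ and its associated line bundle $L_Q\to N$ are glued from trivial pieces by one and the same $\Rt$-valued transition cocycle. Consequently, the associated-bundle construction commutes with pullback: for any smooth map $f\colon N'\to N$ one has $L_{f^\ast Q}\simeq f^\ast L_Q$, since pulling back a bundle pulls back its transition cocycle. Hence it suffices to identify $\sT P$ and $\sT^\ast P$, as $\Rt$-principal bundles over $\sA P$ and $\sJ^1L_P^\ast$, with the pullbacks of $P\to M$ along the canonical projections $\pr_{\sA}\colon\sA P\to M$ and $\pr_{\sJ}\colon\sJ^1L_P^\ast\to M$.

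For the tangent case I would combine the quotient map $q\colon\sT P\to\sA P$ with the tangent bundle projection $\tau_P\colon\sT P\to P$ into
\[
\Phi\colon\sT P\longrightarrow\sA P\times_M P=\pr_{\sA}^\ast P,\qquad \Phi(w)=\big(q(w),\tau_P(w)\big),
\]
which is well defined because both components cover the same point of $M$. The map $\Phi$ is $\Rt$-equivariant: $q$ is invariant under ${\dt}h$ by the very definition of $\sA P$, while $\tau_P\circ({\dt}h)_s=\tau_P\circ\sT h_s=h_s\circ\tau_P$ by naturality of the tangent projection, so the $P$-component is scaled by $s$, matching the action of $\Rt$ on $\pr_{\sA}^\ast P$. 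In the coordinates of (\ref{la}), $\Phi(x^i,\zt,\dot x^j,\dot\zt)=\big((x^i,\dot x^j,t=\dot\zt/\zt),(x^i,\zt)\big)$, which is invertible via $\dot\zt=t\zt$; thus $\Phi$ is an isomorphism of $\Rt$-principal bundles over $\sA P$. Naturality then yields $L_{\sT P}\simeq\pr_{\sA}^\ast L_P=\sA P\times_M L_P$.

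The cotangent case I would handle the same way, and this is the step I expect to require the most care, precisely because the phase lift $\dts h$ is not induced by any functor. I would combine the quotient map $q'\colon\sT^\ast P\to\sJ^1L_P^\ast$ of Proposition \ref{prop:4} with the cotangent bundle projection $\pi_P\colon\sT^\ast P\to P$ into
\[
\Psi\colon\sT^\ast P\longrightarrow\sJ^1L_P^\ast\times_M P=\pr_{\sJ}^\ast P,\qquad \Psi(\alpha)=\big(q'(\alpha),\pi_P(\alpha)\big).
\]
The delicate point is to verify that $\pi_P$ intertwines $\dts h$ with $h$; I would read this off the coordinate formula (\ref{e:17}), where $({\dts}h)_s$ sends the base point $(x^i,\zt)\in P$ to $(x^i,s\zt)=h_s(x^i,\zt)$, so that $\Psi$ is $\Rt$-equivariant. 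In coordinates $\Psi(x^i,\zt,\pi_j,z)=\big((x^i,p_j=\pi_j/\zt,z),(x^i,\zt)\big)$ is invertible via $\pi_j=p_j\zt$, hence an isomorphism of $\Rt$-principal bundles over $\sJ^1L_P^\ast$. Applying naturality once more gives $L_{\sT^\ast P}\simeq\pr_{\sJ}^\ast L_P=\sJ^1L_P^\ast\times_M L_P$, completing the proof.
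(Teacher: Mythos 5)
Your proof is correct, but it is organized along a genuinely different route than the paper's. The paper works directly at the level of the associated line bundles: it defines $\Phi\colon L_{\sT P}\to \sA P\times_M L_P$ on equivalence classes by $[(v,r)]\mapsto\big([v],[(\tau_P(v),r)]\big)$ and constructs the inverse by hand, choosing for given $[v]\in\sA P$ and $[(p,r)]\in L_P$ the representative $w\in[v]$ with $\tau_P(w)=p$ and checking independence of choices; the cotangent case is then repeated verbatim with $\pi_P$ in place of $\tau_P$. You instead prove the stronger intermediate statement that $(q,\tau_P)\colon\sT P\to\sA P\times_M P$ and $(q',\pi_P)\colon\sT^\ast P\to\sJ^1L_P^\ast\times_M P$ are isomorphisms of $\Rt$-principal bundles over the quotient bases, and then descend to the line bundles via the naturality lemma $L_{f^\ast Q}\simeq f^\ast L_Q$, which you justify by the transition-cocycle observation made after Proposition \ref{prop:6}. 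Both arguments hinge on the same geometric fact — the bundle projection to $P$ splits off the fibre direction equivariantly, your verification via (\ref{e:17}) being exactly the delicate point for the phase lift — but the logical order is reversed: the paper obtains the principal-bundle identifications $\sT P\simeq\sA P\times_M L_P^\times$ and $\sT^\ast P\simeq\sJ^1L_P^\ast\times_M L_P^\times$ only afterwards, as corollaries of Proposition \ref{prop:5} combined with Proposition \ref{prop:6}, whereas you establish them first and deduce the proposition from them. The paper's route is self-contained and displays the vector bundle morphism explicitly; yours isolates a reusable structural principle (associated bundles commute with pullback) and delivers the principal-bundle statement as a bonus rather than a consequence.
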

\begin{proof} By definition, points of $L_{\sT P}$ are equivalence classes $$[(v,r)]=\big\{\left({({\dt}h)_s}(v),{r}/{s}\right):\; s\in \R^\times\big\}.$$
To every $[(v,r)]$ we can associate uniquely an element of $\sA P$, namely the orbit $[v]$ of $v$ with respect to ${\dt}h$, and an element of $L_{P}$, namely $[(\tau_P(v), r)]$. This defines a vector bundle morphism
$$\Phi: L_{\sT P}\rightarrow \sA P\times_M L_P.$$
Conversely, starting from $[v]\in \sA P$ and $[(p,r)]\in L_P$ such that $\rho(\tau_P(v))=\rho(p)$, we can reconstruct the element of $L_{\sT P}$. Let $w\in[v]$ be such that $\tau_P(w)=p$. Then $[(w,r)]$ does not depend on the choice of a representative $(p,r)$ of $[(p,r)]$, and
$$\Phi([(w,r)])=([w],[(p,r)])=([v],[(p,r)]), $$
so $L_{\sT P}\simeq \sA P\times_M L_P$.
Similarly, points of $L_{\sT^\ast P}$ are equivalence classes $$[(\alpha,r)]=\big\{\big(({\dts}h)_s(\alpha),{r}/{s}\big):\; s\in \R^\times\big\}.$$
To every $[(\alpha,r)]$ we can associate uniquely an element of $\sJ^1L^\ast_P$, namely the orbit $[\alpha]$ of $\alpha$ with respect to ${\dts}h$, and an element of $L_{P}$, namely ${[(\pi_P(\alpha), r)]}$. This defines a vector bundle morphism
$$\Theta: L_{\sT^\ast P}\rightarrow \sJ^1L^\ast_P\times_M L_P.$$
Again, starting from $[\alpha]\in \sJ^1L^\ast_P$ and $[(p,r)]\in L_P$ such that $\tau(\pi_P(\alpha))=\tau(p)$, we can reconstruct the element of $L_{\sT^\ast P}$. Let $\beta\in[\alpha]$ be such that $\pi_P(\beta)=p$. Then $[(\beta,r)]$ does not depend on the choice of a representative $(p,r)$ of $[(p,r)]$, and
$$\Psi([(\beta,r)])=([\beta],[(p,r)])=([\alpha],[(p,r)]), $$
so $L_{\sT^\ast P}\simeq \sJ^1L^\ast_P\times_M L_P$.

\end{proof}
\noindent As a consequence of Proposition \ref{prop:5} we get
\begin{equation}\label{e:21}
L^\ast_{\sT P}\simeq \sA P\times_M L^\ast_P\quad\text{and}\quad L^\ast_{\sT^\ast P}\simeq \sJ^1L^\ast_P\times_M L^\ast_P.
\end{equation}
In view of Proposition \ref{prop:6}, we get also
$$
\sT P\simeq\sA P\times_M {P} \quad\text{and}\quad \sT^\ast P\simeq \sJ^1L^\ast_P\times_M {P}.
$$
The above canonical isomorphisms refer to the principal as well as to vector bundle structures. Let us recall that sections of the Atiyah algebroid $\sA P\rightarrow M$ can be interpreted as $\R^\times$-invariant vector fields on $P$. Let now $\sigma$ be a section of $L^\ast_P\rightarrow M$ and $\imath_\sigma$ be the corresponding 1-homogeneous function on $P$. If $X$ is an invariant vector field on $P$, then $X(\imath_\sigma)$ is again a 1-homogeneous function, therefore it corresponds to a section of $L^\ast_P\rightarrow M$. The vector field $X$ can be then understood as an operator acting on sections of $L^\ast_P\rightarrow M$ with values in sections of the same bundle. It is easy to see that this is in fact a first-order linear differential operator $D_X:L^\ast_P\to L^\ast_P$. This differential operator can also be viewed as a morphism of vector bundles $D_X:\sJ^1L^*_P\to L^*_P$ covering the identity on $M$. The anchor $\zp_*(X)$ can be identified with the principal symbol of $D_X$. We can formalize this observation as follows.
\begin{proposition}
There is a canonical isomorphism between sections $X$ of the Atiyah algebroid $\sA P\rightarrow M$ and first-order linear differential operators $D_X:L^\ast_P\to L^*_P$. In other words, the Atiyah algebroid $\sA P$ is exactly the Lie algebroid $DO^1(L^*_P;L^*_P)$ of first-order linear differential operators from $L^*_P$ to $L^*_P$ with the commutator bracket,
\be\label{iso1}\sA P=DO^1(L^*_P;L^*_P)=(\sJ^1L_P^*)^*\ot_ML^*_P.\ee
In other words, we have a canonical non-degenerate bilinear pairing
\be\label{pairing}
\Pi:\sA P\ti_M\sJ^1L^*_P\to L^*_P\,,\quad \zi_{\Pi(X,j^1\zs)}=X(\zi_\zs),
\ee
which in local coordinates reads
\be\label{Pi}\Pi(x^i,\dot x^j,t,p_k,z)=(x^i,\dot x^jp_j+tz).\ee
\end{proposition}
\begin{corollary} We have a canonical non-degenerate bilinear pairing
\be\label{pairing1} \sA\Lt\ti_M\sJ^1L^*\to L^*\ee
and canonical vector bundle isomorphisms
\be\label{iso} \Psi_\sA:\sA(L^*)^\ti\to(\sJ^1L)^*\ot_ML\quad \text{and}\quad
\Psi_\sJ:\sJ^1L^*\to\sA^*\Lt\ot_{M} L^*.\ee
\end{corollary}
\begin{proof}
The first isomorphism and the pairing are just  (\ref{iso1}) and (\ref{pairing}), respectively, for $P=(L^*)^\ti$, as $L^*_{(L^*)^\ti}=L$.
Tensoring both sides of (\ref{iso1}) with $L_P$, we get
$$\sA P\ot_ML_P=(\sJ^1L_P^*)^*,$$
since $L_P^*\ot_M L_P=M\ti\R$ is the trivial line bundle. Dualizing the latter for $P=\Lt$, we get
the second isomorphism.
\end{proof}

\begin{remark} Note that for any line bundle $L\to M$ there is a (non-canonical) vector bundle isomorphism $\phi:\sJ^1L^*=(\sJ^1L)^*$ covering the identity on $M$. Indeed, as $\phi$ we can take the composition of an isomorphism $\sj^1\zeta:\sJ^1L^*\to\sJ^1L$, induced by a vector bundle isomorphism $\zeta:L^*\to L$, the last one associated with a vector bundle metric on $L$, with an isomorphism $\sJ^1L\to(\sJ^1L)^*$, associated with a vector bundle metric on $\sJ^1L^*$.
\end{remark}

\subsection{Iterated tangent and cotangent bundles}
Discussing the contact version of the Tulczyjew triple we shall start from the classical version and consider the iterated tangent and cotangent bundles $\sT^\ast\sT P$, $\sT^\ast\sT^\ast P$, and $\sT\sT^\ast P$, which all are canonically (linear) symplectic manifolds.  The wings of the classical Tulczyjew triple for the manifold $P$ consist of maps $\beta_P$ and $\alpha_P$, where
$$\beta_P:\sT\sT^\ast P\ni v\longmapsto \omega_P(\cdot,v)\in \sT^\ast\sT^\ast P$$
is an instance of the canonical symplectic form $\omega_P$ on $\sT^\ast P$, and
$$\alpha_P :\sT\sT^\ast P\longrightarrow \sT^\ast\sT P$$
is the Tulczyjew isomorphism -- the dual of the canonical flip $\kappa_P:\sT\sT P\rightarrow\sT\sT P$.
It is well known that both $\beta_P$ and $\alpha_P$ are double vector bundle isomorphisms, as well as (anti)symplectomorphisms.
Our extra assumption will be that $P$ is an $\R^\times$-principal bundle. In this case, all the three manifolds $\sT^\ast\sT P$, $\sT^\ast\sT^\ast P$, and $\sT\sT^\ast P$ are equipped with the lifted $\Rt$-actions. Since $\beta_P$ and $\alpha_P$ are canonical, the following proposition is not surprising at all.
\begin{proposition}\label{prop:3} The maps $\beta_P$ and $\alpha_P$ intertwine the lifted\, $\Rt$-actions on
$\sT^\ast\sT P$, $\sT^\ast\sT^\ast P$, and $\sT\sT^\ast P$. In other words, $\beta_P$ and $\alpha_P$ are additionally principal bundle isomorphisms.
\end{proposition}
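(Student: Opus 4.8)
The plan is to read off all three intertwinings from the naturality of $\beta_P$ and $\alpha_P$ under diffeomorphisms, with the one subtle point being that the phase lifts carry an extra scaling factor $s$ that must be matched on both sides. First I would make the three second-level actions explicit: on $\sT\sT^\ast P$ it is the tangent lift $\dt(\dts h)_s=\sT(\dts h_s)$ of the phase lift (\ref{e:17}); on $\sT^\ast\sT^\ast P$ it is the phase lift $\dts(\dts h)_s$ of $\dts h$; and on $\sT^\ast\sT P$ it is the phase lift $\dts(\dt h)_s$ of the tangent lift $\dt h$ from (\ref{tl}). The structural fact I would isolate is that every phase lift splits as a purely functorial cotangent lift followed by a fibre homothety: for a diffeomorphism $g$ of a manifold $N$ one has $s\,(\sT g^{-1})^\ast=m_s\circ\widehat g$, where $\widehat g$ is the cotangent lift of $g$ and $m_s$ is multiplication by $s$ in the fibres of $\sT^\ast N\to N$. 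In particular $\dts h_s=m_s\circ\widehat{h_s}$ with $\widehat{h_s}$ a symplectomorphism of $(\sT^\ast P,\omega_P)$ preserving the Liouville form $\theta_P$; since $m_s^\ast\theta_P=s\,\theta_P$, this yields $(\dts h_s)^\ast\theta_P=s\,\theta_P$ and hence $(\dts h_s)^\ast\omega_P=s\,\omega_P$, the degree-one homogeneity of $\omega_P$ that underlies the symplectization picture.

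For $\beta_P(v)=\omega_P(\cdot,v)$ I would first prove the elementary identity that for any diffeomorphism $g$ of $\sT^\ast P$ with $g^\ast\omega_P=c\,\omega_P$ one has $\beta_P\circ\sT g=m_c\circ\widehat g\circ\beta_P$, where $\widehat g$ now denotes the cotangent lift of $g$ to $\sT^\ast\sT^\ast P$ and $m_c$ the fibre homothety there. This follows in one line from $\omega_P(\sT g\,a,\sT g\,b)=c\,\omega_P(a,b)$ by taking $a=\sT g^{-1}w$, $b=v$ and recalling that the cotangent lift acts on covectors by $\widehat g(\xi)=\xi\circ\sT g^{-1}$. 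Applying this to $g=\dts h_s$ and $c=s$, and using $\sT(\dts h_s)=\dt(\dts h)_s$ together with $m_s\circ\widehat{\dts h_s}=\dts(\dts h)_s$, gives exactly $\beta_P\circ\dt(\dts h)_s=\dts(\dts h)_s\circ\beta_P$. The pleasant point is that the conformal factor $s$ produced by the homogeneity of $\omega_P$ is precisely the factor $s$ built into the definition of the phase lift on $\sT^\ast\sT^\ast P$.

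For $\alpha_P$, the dual of the canonical flip $\kappa_P$, there is no symplectic form to supply the factor, so I would split off the functorial and the scaling parts via $\dts h_s=m_s\circ\widehat{h_s}$. Naturality of the flip gives $\alpha_P\circ\sT(\widehat{h_s})=\widehat{\sT h_s}\circ\alpha_P$, where $\widehat{h_s}$ is the cotangent lift of $h_s$ and $\widehat{\sT h_s}$ is the cotangent lift of $\sT h_s$; this is the standard statement, valid for every diffeomorphism, that $\alpha$ intertwines these two lifts. For the remaining homothety I would use that $\alpha_P$ is a morphism of double vector bundles: $\sT(m_s)$ is scalar multiplication by $s$ in the bundle $\sT\pi_P\colon\sT\sT^\ast P\to\sT P$, so linearity of $\alpha_P$ over $\sT P$ gives $\alpha_P\circ\sT(m_s)=m_s''\circ\alpha_P$, with $m_s''$ the fibre homothety of $\sT^\ast\sT P\to\sT P$. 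Composing the two, $\alpha_P\circ\sT(\dts h_s)=m_s''\circ\widehat{\sT h_s}\circ\alpha_P=\dts(\dt h)_s\circ\alpha_P$, since $\dts(\dt h)_s=m_s''\circ\widehat{\sT h_s}$.

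The main obstacle is nothing conceptual but the bookkeeping of these scaling factors: one has to keep straight that each phase lift is a cotangent lift post-composed with a fibre homothety, that $\omega_P$ (equivalently $\theta_P$) is homogeneous of degree one, and that $\alpha_P$ and $\beta_P$ respect, respectively, the relevant linear and conformal-symplectic structures. None of the three intertwinings needs more than this. As a final check of conventions and signs, I would recompute each identity directly in the adapted coordinates $(x^i,\zt,\pi_j,z,\dot x^k,\dot\zt,\dot\pi_l,\dot z)$, where $\beta_P$ and $\alpha_P$ and all three lifted actions are completely explicit.
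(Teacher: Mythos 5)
Your proof is correct and follows essentially the same route as the paper's: for $\beta_P$ both arguments rest on the $1$-homogeneity of $\omega_P$ (you derive it intrinsically via the Liouville form, the paper checks it in adapted coordinates) followed by the same one-line computation with the conformal factor $s$, and for $\alpha_P$ both combine naturality of the flip/Tulczyjew isomorphism under the diffeomorphisms $h_s$ with the double-vector-bundle morphism property of $\alpha_P$ to absorb the extra scaling. Your splitting ${\dts}h_s=m_s\circ\widehat{h_s}$ is simply an explicit packaging of the paper's step of replacing $s$ by $s^{-1}$ and then multiplying by $s$ in the vector bundle structure over $\sT P$, so the mathematical content coincides.
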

\begin{proof} First, let us observe that the canonical symplectic form $\omega_P$ on $\sT^\ast P$ is 1-homogeneous, i.e., $(({\dts}h)_s)^\ast\omega_P=s\,\omega_P.$ Indeed, for a local trivialization of $P$ and the induced adapted coordinates $(x^i,\zt,\pi_j,z)$ on $\sT^*P$ we have
$$\zw_P=\xd\pi_i\we \xd x^i+\xd z\we dt.$$
But, according to (\ref{e:17}), the coordinates $\zt,\pi_j$ are homogeneous of degree 1, and $x^i,z$ are homogeneous of degree 0. Denote for simplicity the $\Rt$-action on $\sT^\ast P$ by $k_s=({\dts}h)_s$. We should prove that $\zb_P$ intertwines the actions {$\dt k$ and $({\dts}k)$}. Indeed, we have
\begin{multline*}
\langle \zw_P(\cdot, \sT k_s(v)), w\rangle=
\zw_P(w, \sT k_s(v))=
s\,\zw_P(\sT k_{s^{-1}}(w),v)= \\
s\,\langle \zw_P(\cdot,v), \sT k_{s^{-1}}(w) \rangle=
s\,\langle (\sT k_{s^{-1}})^\ast\zw_P(\cdot,v), w \rangle=
\langle ({\dts}k)_s(\zw_P(\cdot,v)), w \rangle\,.
\end{multline*}
In the case of $\alpha_P$ we start with the observation that $\kappa_P$ intertwines the lifted $\Rt$-action on $\sT\sT P$. Indeed, the lifted action on $\sT\sT P$ is $({\dt\dt}h)_s=\sT\sT h_s$.
From the definition of the canonical flip $\zk_P$ it follows that $\sT\sT h_s\circ\kappa_P=\kappa_P\circ\sT\sT h_s$. The appropriate diagram of double vector bundle morphisms is the following
$$
\xymatrix@C+10pt{
\sT\sT P \ar[r]^{\kappa_P}\ar[d]_{\sT\sT h_s} & \sT\sT P\ar[d]^{\sT\sT h_s} \\
\sT\sT P \ar[r]^{\kappa_P} & \sT\sT P
}.
$$

\medskip\noindent Applying the duality with respect to the vector bundle structure $\sT\tau_P: \sT\sT P\rightarrow \sT P$ on the right-hand side of the diagram, and with respect to
$\tau_{\sT P}: \sT\sT P\rightarrow \sT P$ on the left-hand side, we get
$$
\xymatrix@C+10pt{
\sT^\ast\sT P & \sT\sT^\ast P\ar[l]_{\alpha_Q}\\
\sT^\ast\sT P \ar[u]^{\sT^\ast\sT h_s} & \sT\sT^\ast P\ar[l]_{\alpha_Q}
\ar[u]_{\sT\sT^\ast h_s}
},
$$

\medskip\noindent In the above, $\sT^\ast\Phi$ denotes $(\sT \Phi)^\ast$ for a diffeomorphism $\Phi$, in particular
$\sT^\ast \Phi$ is an isomorphism of the corresponding cotangent bundles over $\Phi^{-1}$. Replacing $s$ with $s^{-1}$, we get
$$\alpha_P\circ\sT\sT^\ast h_{s^{-1}}=\sT^\ast\sT h_{s^{-1}}\circ \alpha_P.$$
Multiplying by $s$ in the vector bundle structure over $\sT P$ on both sides and using the fact that $\alpha_P$ is a double vector bundle morphism, we get
$$\alpha_P\circ ({\dt}{\dts} h)_{s}=({\dts}{\dt}h)_{s}\circ \alpha_P.$$
\end{proof}
\noindent The full picture for the classical Tulczyjew triple, with the double vector bundle structures being indicated explicitly, is the following commutative diagram of surjective (double) vector bundle morphisms.
\be\xymatrix@C-20pt@R-5pt{
 &\sT^\ast \sT^* P
\ar[ddl] \ar[dr]
 &  &  & \sT \sT^\ast P \ar[lll]_{\zb_P}\ar[rrr]^{\za_P}\ar[ddl] \ar[dr]
 &  &  & \sT^\ast \sT P \ar[ddl] \ar[dr]
 & \\
 & & \sT P \ar[ddl]
 & & & \sT P\ar[rrr]\ar[lll]\ar[ddl]
 & & & \sT P\ar[ddl]
 \\
 \sT^\ast P \ar[dr]
 & & & \sT^\ast P\ar[dr]\ar[rrr]\ar[lll]
 & & & \sT^\ast P\ar[dr]
 & & \\
 & P
 & & & P \ar[rrr]\ar[lll]& & & P &
}\label{zb}\ee

\medskip\noindent Since $P$ is an $\Rt$-principal bundle, it is now clear that, in fact, all manifolds in the above diagram are canonically $\Rt$-principal bundles. By completely analogous methods as above we can prove the following extension of Proposition \ref{prop:3}.
\begin{theorem}\label{t1}
If $P$ is an $\Rt$-principal bundle, then all maps in diagram (\ref{zb})
are surjective morphisms of $\Rt$-principal bundles.
\end{theorem}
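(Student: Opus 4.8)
The plan is to reduce the whole diagram to Proposition \ref{prop:3} together with the naturality of the canonical tangent and cotangent projections. The two analytically nontrivial arrows, the wings $\zb_P$ and $\za_P$, have already been shown in Proposition \ref{prop:3} to intertwine the lifted $\Rt$-actions; every other arrow in (\ref{zb}) is a structural projection of one of the three double vector bundles, and for these equivariance should follow formally once the correct actions are in place.

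First I would equip each node of (\ref{zb}) with its canonical lifted action. Starting from $h_s$ on $P$, the tangent lift ${\dt}h_s=\sT h_s$ acts on $\sT P$ and the phase lift ${\dts}h_s$ acts on $\sT^\ast P$; iterating, $\sT\sT^\ast P$ carries ${\dt}{\dts}h_s$, the bundle $\sT^\ast\sT P$ carries ${\dts}{\dt}h_s$, and $\sT^\ast\sT^\ast P$ carries ${\dts}{\dts}h_s$. Each of these is again a free and proper action, being a tangent or phase lift of one, exactly as recorded around (\ref{tl})--(\ref{e:17}) and in Proposition \ref{prop:4}; hence every node is canonically an $\Rt$-principal bundle.

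Next I would verify that every projection arrow intertwines the actions at its two ends, sorting the arrows into types. For a tangent projection the naturality identity $\tau_N\circ\sT\phi=\phi\circ\tau_N$, valid for any diffeomorphism $\phi$, gives equivariance of the legs $\tau_{\sT^\ast P}\colon\sT\sT^\ast P\to\sT^\ast P$ and of $\tau_P,\pi_P$ in the lower rows at once. For a cotangent projection one uses that the phase lift covers the base map, $\pi_N\circ({\dts}h)_s=h_s\circ\pi_N$ (the factor $s$ in ${\dts}h_s$ acts only along the cotangent fibres), which handles $\pi_{\sT^\ast P}\colon\sT^\ast\sT^\ast P\to\sT^\ast P$ and $\pi_{\sT P}\colon\sT^\ast\sT P\to\sT P$. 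For a leg obtained by applying the tangent functor to a projection, such as $\sT\pi_P\colon\sT\sT^\ast P\to\sT P$, functoriality gives
\[
\sT\pi_P\circ({\dt}{\dts}h)_s=\sT\big(\pi_P\circ({\dts}h)_s\big)=\sT\big(h_s\circ\pi_P\big)=({\dt}h)_s\circ\sT\pi_P,
\]
the middle step being the equivariance of $\pi_P$ just established. Finally, the horizontal arrows in the lower three rows are the side maps that $\zb_P$ and $\za_P$ cover as double vector bundle morphisms; these are the canonical identifications of the copies of $\sT P$, $\sT^\ast P$ and $P$, so their equivariance is immediate, and surjectivity of all these arrows is automatic since each is a (double) vector bundle projection, hence a surjective submersion.

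The only legs not covered above are the two duality projections built into the double vector bundle structures, $\sT^\ast\sT^\ast P\to\sT P$ and $\sT^\ast\sT P\to\sT^\ast P$, each obtained by dualizing one vector bundle structure. I expect these to be the main obstacle, precisely because dualization is where the phase lift inserts its factor of $s$: one must track the homogeneity weights to confirm that the dual leg rescales the principal-direction coordinate by exactly the right power of $s$. I would treat them in the spirit of the $\za_P$ argument in Proposition \ref{prop:3}, realizing each as the dual of an already equivariant canonical map and checking the induced weights, with a cross-check in the adapted coordinates $(x^i,\zt,\pi_j,z)$ where the lifted actions multiply exactly the degree-one coordinates by $s$ and each projection visibly forgets or rescales coordinates compatibly. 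Assembling this with Proposition \ref{prop:3} then shows that every arrow in (\ref{zb}) is a surjective morphism of $\Rt$-principal bundles.
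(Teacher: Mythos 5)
Your proposal is correct, and in spirit it is the same as the paper's own treatment: the paper offers no written proof of Theorem \ref{t1} at all beyond the remark that it follows ``by completely analogous methods'' to Proposition \ref{prop:3}, so your type-by-type decomposition (wings via Proposition \ref{prop:3}; tangent projections via naturality of $\tau$; cotangent projections because the phase lift covers the base action; legs like $\sT\pi_P$ via functoriality of $\sT$; horizontal arrows as canonical identifications) is a legitimate, and in fact more explicit, filling-in of that one sentence.

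The only point to settle is the step you flag as ``the main obstacle'', and it is not one. First, it can be bypassed entirely: since $\zb_P$ and $\za_P$ are double vector bundle isomorphisms covering the identity on the side bundles $\sT P$ and $\sT^\ast P$, the (classical, action-independent) commutativity of diagram (\ref{zb}) identifies the two duality legs as compositions of maps you have already shown to be equivariant, namely the leg $\sT^\ast\sT^\ast P\to\sT P$ equals $\sT\pi_P\circ\zb_P^{-1}$ and the leg $\sT^\ast\sT P\to\sT^\ast P$ equals $\tau_{\sT^\ast P}\circ\za_P^{-1}$; equivariance is then inherited from the factors. Second, the direct weight check you sketch does come out right: on $\sT^\ast\sT P$ the lifted action reads $(x^i,\zt,\dot x^j,\dot\zt,a_k,a_\zt,b_l,b_{\dot\zt})\mapsto(x^i,s\zt,\dot x^j,s\dot\zt,sa_k,a_\zt,sb_l,b_{\dot\zt})$, i.e.\ the momenta conjugate to the degree-zero coordinates $x^i,\dot x^j$ acquire the factor $s$, while the duality leg sends this point to $(x^i,\zt,\pi_j,z)=(x^i,\zt,b_j,b_{\dot\zt})\in\sT^\ast P$, which is exactly equivariance with respect to $({\dts}h)_s(x^i,\zt,\pi_j,z)=(x^i,s\zt,s\pi_j,z)$ from (\ref{e:17}); the computation for $\sT^\ast\sT^\ast P\to\sT P$ is identical with the weights interchanged. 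With either of these remarks inserted in place of your ``I expect'', the argument is complete.
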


\section{Contact structures}\label{sec:3}
In the literature of the subject, a \emph{contact structure} is a \emph{contact distribution}, i.e., a maximally non-integrable distribution $C\subset\sT M$ of corank 1 on a manifold $M$ of odd dimension $2n+1$. Traditionally, the hyperplanes forming this distribution are called \emph{contact elements}. The maximal non-integrability means that the bilinear map
$$\zn_C:C\ti_MC\to\sT M/C\,,\quad \zn_C(X,Y)=\zg([X,Y])$$
is non-degenerate. Here $[X,Y]$ is the Lie bracket of vector fields belonging to $C$, and $\zg:\sT M\to\sT M/C$ is the canonical projection onto the line bundle $\sT M/C$. Note that $\zn_C$ is well defined as a bilinear map on $C$. Such a distribution is locally a kernel of a non-vanishing one form $\eta$ on $M$, and the maximal non-integrability condition is then expressed as
\begin{equation}\label{e:23}
\eta\wedge(d\eta)^n\neq 0.
\end{equation}
Such a form is called a \emph{contact form} and is not uniquely determined by $C$, since the kernels of $\eta$ and $f\eta$ are the same, provided $f$ is a non-vanishing function. A contact structure is also often understood as a manifold equipped with a global contact form $\zh$. We will call such contact structures $C=\ker(\zh)$ \emph{trivial} or \emph{co-oriented}. Note that (\ref{e:23}) implies that any trivializable contact manifold must be orientable. Of course, for a general contact structure $C$ a global contact form $\zh$ such that $C=\ker(\zh)$ need not exist.
Moreover, there is a certain inconsequence in understanding contact structures as manifolds with a chosen global contact form, since even in this case \emph{contactomorphisms} are defined as smooth maps preserving the contact form only up to a non-vanishing factor. This of course has the precise meaning that the distribution $C$, not the contact form, is preserved.

In most recent papers about contact mechanics (e.g  \cite{Bravetti:2017,Cruz:2018,deLeon:2019,deLeon:2021,deLeon:2021a,Esen:2021}) the authors work exclusively with trivial contact manifolds. We will use the more general concept of a contact manifold.
\begin{definition}
A {\it contact structure} on a manifold $M$ of odd dimension $2n+1$ is a distribution of hyperplanes $C\subset\sT M$ such that $C$ is locally a kernel of a contact form $\eta$. If the contact form $\zh$ may be chosen global, we call the contact structure \emph{trivializable} (\emph{co-orientable}); if a single global $\zh$ is chosen, the contact structure is \emph{trivial} (\emph{co-oriented}).
\end{definition}
\noindent To work with contact structures we shall use the language of \emph{symplectic $\Rt$-principal bundles}, due to the following theorem (see also \cite{Bruce:2017,Grabowska:2022}).
\begin{theorem}[\cite{Grabowski:2013}]
There is a canonical one-to-one correspondence betwe-en contact structures $C\subset\sT M$ on a manifold $M$  and symplectic $\Rt$-principal bundles over $M$, i.e., $\Rt$-principal bundles $\pi:P\to M$ equipped with a 1-homogeneous symplectic form $\zw$. In this correspondence,  the symplectic $\Rt$-principal bundle associated with $C$ is $(C^o)^\ti\subset\sT^*M$ equipped with the restriction of the canonical symplectic form $\zw_M$, where $C^o\subset\sT^*M$ is the annihilator of $C$.
\end{theorem}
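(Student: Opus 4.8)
The plan is to set up the correspondence in both directions and to reduce everything to a single local identity that converts maximal nonintegrability into nondegeneracy of a symplectic form. First I would treat the forward map $C\mapsto\big((C^o)^\ti,\zw_M|_{(C^o)^\ti}\big)$. Given a contact structure $C\subset\sT M$ on $M^{2n+1}$, the annihilator $C^o\subset\sT^\ast M$ is a line subbundle, so by the line-bundle/principal-bundle correspondence recalled at the start of Section~\ref{sec:2} the nonzero part $(C^o)^\ti=C^o\setminus 0_M$ is canonically an $\Rt$-principal bundle over $M$, with action the fibrewise multiplication $m_s(\alpha)=s\alpha$ inherited from $\sT^\ast M$. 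Since the Liouville $1$-form $\theta_M$ obeys $m_s^\ast\theta_M=s\,\theta_M$, the canonical form $\zw_M=d\theta_M$ is $1$-homogeneous, and so is its restriction to the invariant submanifold $(C^o)^\ti$. Nondegeneracy I would check in a local contact form $\eta$ generating $C^o$: writing $t$ for the fibre coordinate in which a point of $C^o$ reads $t\,\eta$, the restriction of $\theta_M$ is $t\,\eta$ (pulled back along $\pi\colon(C^o)^\ti\to M$), so $\zw_M|_{(C^o)^\ti}=dt\wedge\eta+t\,d\eta$. Because $(d\eta)^{n+1}$ is a $(2n+2)$-form pulled back from $M^{2n+1}$ it vanishes, and only the cross term survives:
\[
\big(\zw_M|_{(C^o)^\ti}\big)^{\,n+1}=(n+1)\,t^{\,n}\,dt\wedge\pi^\ast\!\big(\eta\wedge(d\eta)^n\big).
\]
As $t\neq0$ on $(C^o)^\ti$, this top form is nonzero exactly when $\eta\wedge(d\eta)^n\neq0$, i.e. precisely under the contact condition~(\ref{e:23}); hence $\zw_M|_{(C^o)^\ti}$ is symplectic. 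The objects are intrinsic, so the conclusion is independent of the auxiliary $\eta$.

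For the inverse, injectivity is immediate: $C^o$ is recovered from $(C^o)^\ti$ as its fibrewise linear span and $C=(C^o)^o$. For surjectivity I would start from an abstract symplectic $\Rt$-principal bundle $(\pi\colon P\to M,\zw)$ and let $\nabla$ be the fundamental (Euler) vector field of the action, which is nonzero everywhere since the action is free. Differentiating the homogeneity relation $h_s^\ast\zw=s\,\zw$ gives $\cL_\nabla\zw=\zw$, so by Cartan's formula $\zw=d\theta$ with $\theta:=i_\nabla\zw$; moreover $i_\nabla\theta=0$ (so $\theta$ is semibasic) and $h_s^\ast\theta=s\,\theta$. Semibasicity lets me define an $\Rt$-equivariant map $\Phi\colon P\to\sT^\ast M$ over the identity by $\langle\Phi(p),v\rangle=\theta_p(\tilde v)$ for any $\tilde v\in\sT_pP$ projecting to $v$; equivariance $\Phi(h_sp)=s\,\Phi(p)$ follows from $h_s^\ast\theta=s\,\theta$, and a short computation gives $\Phi^\ast\theta_M=\theta$, whence $\Phi^\ast\zw_M=\zw$.

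It then remains to identify the image. Since $\Phi^\ast\zw_M=\zw$ is nondegenerate, $\Phi$ is an immersion, and $\Phi(p)\neq0$ for every $p$, for otherwise $i_\nabla\zw$ would vanish at $p$ while $\nabla_p\neq0$, contradicting nondegeneracy. Being equivariant and covering the identity, $\Phi$ carries each fibre bijectively onto a punctured line $\Rt\!\cdot\Phi(p)\subset\sT^\ast_{\pi(p)}M$, so its image is $(C^o)^\ti$ for the line subbundle $C^o:=\Image\Phi\cup 0_M$, and $\Phi$ is an $\Rt$-equivariant diffeomorphism onto it. Putting $C:=(C^o)^o\subset\sT M$, the identity $\zw=\Phi^\ast\zw_M$ identifies $\zw$ with $\zw_M|_{(C^o)^\ti}$, so the computation above read backwards forces $\eta\wedge(d\eta)^n\neq0$ for a local generator $\eta$ of $C^o$; thus $C=\ker\eta$ is a genuine contact structure whose associated bundle is $(P,\zw)$, and the construction is manifestly canonical. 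The hard part will be exactly this surjectivity step — reconstructing the embedding $P\hookrightarrow\sT^\ast M$ canonically out of $(\zw,\nabla)$ and verifying that it lands on $(C^o)^\ti$ — together with the homogeneity bookkeeping that lets the single identity of the first paragraph serve both the ``contact $\Rightarrow$ symplectic'' and ``symplectic $\Rightarrow$ contact'' implications.
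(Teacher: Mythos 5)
The paper offers no proof of this statement to compare against: it is imported verbatim from \cite{Grabowski:2013}. Your argument is correct and is essentially the standard proof from that reference (see also \cite{Bruce:2017}): the symplectization $C\mapsto\big((C^o)^\ti,\zw_M|_{(C^o)^\ti}\big)$ with nondegeneracy equivalent to $\zh\we(d\zh)^n\neq 0$ via the identity $\big(dt\we\zh+t\,d\zh\big)^{n+1}=(n+1)\,t^n\,dt\we\zh\we(d\zh)^n$, and, conversely, reconstruction of the canonical equivariant embedding $P\hookrightarrow\sT^*M$ from the semibasic potential $i_\n\zw$ --- the same mechanism this paper itself exploits in Section~\ref{ss1}, where $\zvy=i_\n\zw_{\Lt}$ is semibasic and recovers the local contact form as $\zh=-\zvy/\zt$ (your only elided step, that $\Image\Phi\cup 0_M$ is a \emph{smooth} line subbundle, follows in one line from local sections of $P$).
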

Of course, $\zw$ is 1-homogeneous which means exactly that $h_s^*(\zw)=s\zw$.
\begin{definition}
An \emph{isomorphism of symplectic $\Rt$-principal bundles} $$(P_i,M_i,\zw_i),$$ $i=1,2$, is an isomorphism
\be\label{cms}
\xymatrix@C+10pt{
 P_1 \ar[r]^{\Phi}\ar[d]_{\zp_1} & P_2\ar[d]^{\zp_2} \\
M_1 \ar[r]^{\zf} & M_2}
\ee
of the $\Rt$-principal bundles $\zp_1:P_1\to M_1$ and $\zp_2:P_2\to M_2$
such that $\Phi^*(\zw_2)\\=\zw_1$. 
\end{definition}
The following is an easy exercise.
\begin{proposition}\label{cm}
If (\ref{cms}) is an isomorphism of symplectic $\Rt$-principal bundles, then $\zf$ is a contactomorphism of the corresponding contact structures on $M_1$ and $M_2$. Conversely,
if $(M_i,C_i)$, $i=1,2$, are contact manifolds and $\zf:M_1\to M_2$ is a contactomorphism, then
$(\sT\zf^{-1})^*:\sT^*M_1\to\sT^*M_2$ induces an isomorphism of symplectic $\Rt$-principal bundles
$(C_1^o)^\ti$ and $(C_2^o)^\ti$, so there is an isomorphism (\ref{cms}) of symplectic $\Rt$-bundles.
\end{proposition}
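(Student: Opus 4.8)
The plan is to prove the two implications separately. The conceptual bridge is that the contact hyperplane field can be recovered intrinsically from the $1$-homogeneous symplectic data on the principal bundle through the tautological (Liouville) one-form, and that on the symplectic side the morphism attached to a contactomorphism is its cotangent lift.

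For the converse implication, start from a contactomorphism $\zf:M_1\to M_2$, i.e.\ a diffeomorphism with $\sT\zf(C_1)=C_2$, and put $\Phi:=(\sT\zf^{-1})^\ast:\sT^\ast M_1\to\sT^\ast M_2$, the cotangent lift of $\zf$, which covers $\zf$. I would invoke three facts: (i) $\Phi$ preserves the tautological forms, $\Phi^\ast\theta_{M_2}=\theta_{M_1}$, hence $\Phi^\ast\zw_{M_2}=\zw_{M_1}$; (ii) $\Phi$ is fibrewise linear, hence equivariant for the fibre dilations, i.e.\ for the $\Rt$-actions; (iii) $\Phi$ carries $(C_1^o)^\ti$ onto $(C_2^o)^\ti$. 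Only (iii) uses the hypothesis: for $\alpha\in C_1^o$ over $m$ and $Y\in C_2$ over $\zf(m)$ one has $\langle\Phi(\alpha),Y\rangle=\langle\alpha,\sT\zf^{-1}(Y)\rangle$, and $\sT\zf^{-1}(Y)\in C_1$ precisely because $\zf$ is a contactomorphism, so $\Phi(\alpha)$ annihilates $C_2$; since the $C_i^o$ are line subbundles and $\Phi$ is a fibrewise isomorphism, $\Phi(C_1^o)=C_2^o$. Restricting $\Phi$ to the nonzero parts then produces, by (i)--(iii), an isomorphism $(C_1^o)^\ti\to(C_2^o)^\ti$ of $\Rt$-principal bundles over $\zf$ pulling the restricted canonical form back to the restricted canonical form (with $k=1$); this is exactly an isomorphism of symplectic $\Rt$-principal bundles.

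For the direct implication, let $\Phi:P_1\to P_2$ be an isomorphism of symplectic $\Rt$-principal bundles over $\zf$, with $\Phi^\ast\zw_2=k\,\zw_1$. By the correspondence theorem I may identify $P_i$ with the model $(C_i^o)^\ti\subset\sT^\ast M_i$ carrying $\zw_i$ the restriction of the canonical form $\zw_{M_i}$. The key step is a recovery lemma: writing $Z_i$ for the Euler (fibre-dilation) vector field on $P_i$ and $\theta_i:=i_{Z_i}\zw_i$, the form $\theta_i$ is the restriction of the tautological form, and for every $\alpha\in P_i$ over $m$ one computes $\ker(\theta_i)_\alpha=(\sT_\alpha\zp_i)^{-1}(C_{i,m})$, whence $C_i=\sT\zp_i(\ker\theta_i)$; in the model this is immediate from $\theta_{M}=p_i\,\rmd x^i$, $Z=p_i\partial_{p_i}$. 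Because $\Phi$ is $\Rt$-equivariant it intertwines the Euler fields, $\Phi_\ast Z_1=Z_2$, and therefore $\Phi^\ast\theta_2=i_{Z_1}\Phi^\ast\zw_2=k\,\theta_1$; as $k\ne0$, $\sT\Phi$ maps $\ker\theta_1$ onto $\ker\theta_2$. Using $\zp_2\circ\Phi=\zf\circ\zp_1$, hence $\sT\zp_2\circ\sT\Phi=\sT\zf\circ\sT\zp_1$, I obtain
$$C_2=\sT\zp_2(\ker\theta_2)=\sT\zp_2(\sT\Phi(\ker\theta_1))=\sT\zf(\sT\zp_1(\ker\theta_1))=\sT\zf(C_1),$$
so $\zf$ is a contactomorphism.

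The main obstacle is the direct implication: an abstract isomorphism of symplectic $\Rt$-principal bundles need not be the cotangent lift of anything, so the distribution cannot be transported directly. Everything rests on the intrinsic recovery $C=\sT\zp(\ker i_Z\zw)$ of the contact hyperplane field from the $1$-homogeneous symplectic form, together with the fact that an $\Rt$-equivariant symplectomorphism respects exactly this structure via $\Phi^\ast\theta_2=k\,\theta_1$. The converse implication is routine cotangent-lift bookkeeping, in line with the authors' remark that the statement is an easy exercise.
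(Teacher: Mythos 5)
Your proof is correct and complete. The paper itself offers no proof of this proposition (it is dismissed as ``an easy exercise''), and your argument is exactly the one its framework intends: the converse is the routine cotangent-lift computation, and the direct implication rests on recovering the contact distribution intrinsically as $C=\sT\zp(\ker i_Z\zw)$ via the $1$-homogeneous semibasic potential $i_Z\zw$ --- precisely the object the paper computes in Subsection~\ref{ss1} as $\zvy=i_\n\zw_{L^\ti}$ with $\zh=-\zvy/\zt$ --- combined with the equivariance identity $\Phi^*\theta_2=k\,\theta_1$.
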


A submanifold $\cL$ in the contact manifold $M$ of dimension $(2n+1)$ we call a \emph{Legendre submanifold} if {$\dim(\cL)=n$} and vectors tangent to $\cL$ belong to $C$, $\sT\cL\subset C$. In the symplectic picture, there is a one-to-one correspondence between Legendre submanifolds in $M$ and $\Rt$-invariant Lagrange submanifolds in $P$, given by $\cL\mapsto\pi^{-1}(\cL)$.

\subsection{Canonical contact structures on jet bundles}\label{ss1}
For more details and examples of contact structures in the symplectic $\Rt$-principal bundle approach and its application to Hamiltonian mechanics and reductions we refer to \cite{Grabowski:2013}, \cite{Grabowska:2022}, and \cite{Grabowska:2023}. For the purpose of the contact Tulczyjew triple, we shall need mostly a family of canonical examples of contact manifolds, namely that of the first jet bundles of sections of line bundles. 

Let $\rho: L\rightarrow M$ be a line bundle and $\rho^\ast: L^\ast\rightarrow M$ be its dual. Then, according to Proposition \ref{prop:4}, the bundle $\sT^\ast L^\times\rightarrow \sJ^1L^\ast$ is an $\Rt$-principal bundle. Moreover, while proving Proposition \ref{prop:3} we have already checked that the canonical symplectic structure on the cotangent bundle of an $\Rt$-principal bundle is homogeneous. This means that $\sJ^1L^\ast$ carries a canonical contact structure.

The contact distribution $C\subset \sT\sJ^1L^\ast$ is spanned by vectors tangent to first jet prolongations of sections of $L^\ast$ and vectors vertical with respect to the projection $\sJ^1L^\ast\rightarrow L^\ast$. It is therefore a particular \emph{Cartan distribution} defined generally on jet bundles and playing an important r\^ole in the theory of PDEs. To see that it is indeed the contact structure, let us choose local coordinates $(x^i)$, the adapted coordinates $(x^i, z)$ in $L^*$ associated with a local trivialization, and adapted coordinates $(x^i,p_j,z)$ in $\sJ^1L^\ast$. Let us now consider a local section $\sigma$ of $L^\ast$ given in coordinates by $z=\zs(x^i)$. Then the first jet prolongation is a map
$$M\ni x\longmapsto \sj^1\sigma(x)\in \sJ^1L^\ast,$$
which in coordinates reads
$$(x^i)\longmapsto \left(x^i, \frac{\partial \zs}{\partial x^j}(x), \zs(x)\right).$$
The image of $\partial_{x^i}$ by $\sT(\sj^1\sigma)$  at $\left(x^i,p_j=\frac{\partial \zs}{\partial x^j}, z=\zs(x)\right)$ reads
$$\sT(\sj^1\sigma)(\partial_{x^i})=\partial_{x^i}+\frac{\partial^2\zs}{\partial x^i\partial x^j}\partial_{p_j}+\frac{\partial \zs}{\partial x^i}\partial z.$$
Changing $\zs$ we can get at a point $(x^i,p_j,z)$ any vector of the form $\partial_{x^i}+a_{ij}\partial_{p_j}+p_i\partial z$ with symmetric $a_{ij}\in\R$. Vectors vertical with respect to the projection $\sJ^1L^\ast\rightarrow L^\ast$ are spanned by $\partial_{p_j}$, $j=1\ldots n$. Summarizing, the distribution $C$ is spanned by
$$\big\{\partial_{x^i}+p_i\partial_z, \partial_{p_j}\big\}, \quad i,j\in\{1,\ldots,n\}.$$
It is clear that $C$ is of rank $2n$. Locally, the distribution $C$ is annihilated by $\eta=\xd z-p_i\,\xd x^i$ which is a local contact form associated with the trivialization.
Notice that changing the local linear coordinate in $L^\ast$ according to the formula  $z'=\varphi(x)z$ for some function $\varphi$, we get the transformation rules for coordinates in $\sJ^1L^\ast$ in the form
$$p'_i=\frac{\partial \varphi}{\partial x^i}(x)z+\varphi(x)p_i,\quad z'=\varphi(x)z,$$
so that the new local contact form $\xd z'-p'_i\,\xd x^i$ differs from $\xd z-p_i\,\xd x^i$ by the conformal factor $\varphi$:
$$\xd z'-p'_i\,\xd x^i=\varphi(x)(\xd z-p_i\,\xd x^i).$$

In $L$ we can use the dual coordinates $(x^i,\zt)$, i.e., the pairing between $(x,\zt)$ and $(x, z)$ at $(x^i)$ reads just $\zt z$. The same coordinates can be used in $L^\times$ with the additional condition $\zt\neq 0$. The canonical symplectic form on $\sT^\ast L^\times$ reads then in the adapted coordinates $(x^i,\zt,\pi_j,z)$
$$\omega_{L^\times}=\xd z\wedge \xd\zt+\xd\pi_i\wedge \xd x^i.$$
Then the principal bundle projection
$\sT^*L^\ti\to\sJ^1L^\ast$ reads
$$\sT^\ast L^\times\ni (x^i, \zt, \pi_j, z)\longmapsto (x^i,p_j=\pi_j/\zt,z)\in \sJ^1L^\ast.$$
We can as well use the coordinates $(x^i, \zt, p_j, z)$ on $\sT^*L^\ti$ in which the canonical symplectic form  reads
\begin{align*}
\omega_{L^\times}&=\xd z\wedge\xd\zt+\zt \xd p_j\wedge \xd x^j +p_i\,\xd\zt\wedge\xd x^i\\
&=
(\xd z-p_i\,\xd x^i)\wedge\xd\zt +\zt\,\xd p_j\wedge \xd x^j=\zh\we \xd\zt-\zt\,\xd\zh.
\end{align*}
The form $\omega_{L^\times}$ is explicitly 1-homogeneous. Contracting it with the `Euler vector field' $\n$ associated with the $\Rt$-action (the opposite of the fundamental vector field), which is homogeneous of degree 0 and in coordinates reads $\n=\zt\,\partial_\zt$, we get a 1-homogenous one form
$$\zvy=i_\n\omega_{L^\times}=-\zt(\xd z-p_i\,\xd x^i)=-\zt\,\zh,$$
which is a potential for the symplectic form,
$$\omega_{L^\times}=\xd \zvy.$$
This form is semibasic with respect to the projection onto $\sJ^1L^\ast$. We can recover the local contact form $\zh$ as
$$\zh=\xd z-p_i\,\xd x^i=-{\zt}^{-1}\zvy.$$
Note that $\zvy$ is a geometric object associated with the symplectic $\Rt$-principal bundle structure and independent of the choice of trivialization and coordinates. On the other hand, $\eta=-\zvy/\zt$ involves the coordinate $\zt$, so depends on the local trivialization.
\begin{proposition}\label{triv}
The canonical contact structure on $\sJ^1L^*$ is trivializable if and only if $L$ (thus $L^*$) is trivializable.
\end{proposition}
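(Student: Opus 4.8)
The plan is to reduce trivializability of the canonical contact structure on $\sJ^1L^*$ to trivializability of a single line bundle over $\sJ^1L^*$, and then to recognize that line bundle as a pullback of $L$.

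First I would recall that, since the contact distribution $C\subset\sT\sJ^1L^*$ has corank $1$, a global contact form is exactly a nowhere-vanishing global section of the annihilator line bundle $C^o\subset\sT^*\sJ^1L^*$ (any nonzero covector annihilating $C$ has kernel precisely $C$, so the contact condition \eqref{e:23} is automatic). Hence the contact structure is trivializable if and only if $C^o$ is a trivial line bundle. The next step is to identify $C^o$. The local contact forms $\eta=dz-p_idx^i$ attached to local trivializations of $L^*$ are local frames of $C^o$, and the transformation rule computed just above, $\eta'=\varphi(x)\,\eta$ under $z'=\varphi z$, shows that the transition functions of $C^o$ are those of $L$ pulled back to $\sJ^1L^*$; that is, writing $\pi_1:\sJ^1L^*\to M$ for the canonical vector bundle projection, $C^o\cong\pi_1^*L$. (The same identification follows from the symplectic picture: the contact structure corresponds to the symplectic $\Rt$-principal bundle $\sT^*L^\times\to\sJ^1L^*$, whose associated line bundle is $L_{\sT^*L^\times}\cong\sJ^1L^*\times_M L=\pi_1^*L$ by Proposition \ref{prop:5}, and Proposition \ref{prop:6} equates its trivializability with that of the principal bundle.)

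It then remains to show that $\pi_1^*L$ is trivializable if and only if $L$ is. The implication $(\Leftarrow)$ is immediate: the pullback of a trivial bundle is trivial, and concretely a global frame of $L^*$ supplies a global fibre coordinate $z$, so that $\eta=dz-p_idx^i$ becomes a globally defined contact form. For $(\Rightarrow)$ I would use that $\pi_1$ is a vector bundle projection, hence admits the zero section $s_0:M\to\sJ^1L^*$ with $\pi_1\circ s_0=\mathrm{id}_M$; pulling a trivialization of $\pi_1^*L$ back along $s_0$ yields a trivialization of $s_0^*\pi_1^*L=(\pi_1\circ s_0)^*L=L$. Since $L$ is trivializable if and only if $L^*$ is, this gives the stated equivalence.

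The only substantive step is the identification $C^o\cong\pi_1^*L$; after that the argument is the routine zero-section pullback. I expect the main pitfall to be bookkeeping the direction of the conformal factor correctly, i.e. ensuring that $\varphi$ lands so as to produce $L$ rather than $L^*$, but this is pinned down exactly by the displayed rule $\eta'=\varphi\eta$.
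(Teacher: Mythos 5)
Your proof is correct, but it is organized around a different key lemma than the paper's, even though both ultimately rest on the same two ingredients: the conformal rule $\eta'=\varphi\,\eta$ for the local contact forms and a restriction to the zero section of $\sJ^1L^*\to M$. The paper works with the contact form directly: given a global $\zh$, it writes $\zh=F\,(dz-p_idx^i)$ in charts coming from local trivializations of $L^*$, restricts to the zero section, recognizes $\zh(x,0,0)$ as $d(\zi_\zs)(x,0)$ for a uniquely determined local section $\zs$ of $L$ (linear functions on $L^*$ being the same as sections of $L$), and glues these local $\zs$ into a global non-vanishing section of $L$; the small uniqueness step (``two linear functions cannot differ by a nonzero constant'') is what replaces your bundle bookkeeping. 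You instead isolate a structural statement the paper never makes explicit: the annihilator line bundle satisfies $C^o\cong\pi_1^*L$, so trivializability of the contact structure becomes triviality of $\pi_1^*L$, which the zero-section retraction $s_0^*\pi_1^*L=L$ settles. Two points in your write-up add genuine value: the observation that any nowhere-vanishing section of $C^o$ is automatically a contact form (maximal non-integrability depends only on the distribution $C$), which the paper passes over silently; and the isomorphism $C^o\cong\pi_1^*L$ itself, which pins down the exact topological obstruction to co-orientability and, as you note, also follows from machinery already in the paper (Proposition \ref{prop:5} with $P=\Lt$, Proposition \ref{prop:6}, and the correspondence theorem). What the paper's version buys in exchange is self-containedness: it never has to set up the line-bundle isomorphism or track cocycle conventions. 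Finally, the pitfall you flagged (whether $\eta'=\varphi\eta$ yields $L$ or $L^*$) is resolved correctly in your favor, and is in any case harmless for the equivalence, since $L$ is trivializable if and only if $L^*$ is.
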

\begin{proof}
The `if' part is obvious. Suppose that there is a global contact form $\zh$ for the canonical contact structure on $\sJ^1L^*$. We know that for local coordinates $(x^i,p_j,z)$ on $\sJ^1L^*$, associated with a local trivialization of $L^*$, a contact form for the canonical structure is $\xd z-p_i\,\xd x^i$, so
$$\zh(x^i,p_j,z))=F(x^i,p_j,z)\left(\xd z-p_i\,\xd x^i\right)$$ for a local non-vanishing function $F$ on $\sJ^1L^*$. Restricting $\zh$ to the zero-section, we get $\zh(x,0,0)=\xd(f(x)z)(x,0,0)$, where $f(x)=F(x,0,0)$. Here, $f(x)z$ is a local linear function on $L^*$,  i.e., of the form $\zi_\zs$ (more precisely, the pull-back of $\zi_\zs$ with respect to the canonical projection $\sJ^1L^*\to L^*$), where $\zs$ is a local section of $L$. Two different linear functions cannot differ by a nonzero constant, so $\zs$ is determined uniquely.  But $\zh$ is global, so $\zh(x,0,0)=\xd(\zi_\zs)(x,0)$ defines uniquely a section $\zs$ of $L$. If $\zs(x)=0$, then $\xd(\zi_\zs)(x,0)=0$, therefore $\zs$ is non-vanishing, since $\zh$ is non-vanishing.
\end{proof}
\begin{example}
In the case of the trivial line bundle $L=M\times \R$, both the dual bundle $L^\ast\simeq M\times \R^\ast$ and the first jet bundle $\sJ^1L^\ast\simeq \sT^\ast M\times\R^\ast$ are also trivial. All the above calculations have then a geometric character because we have distinguished global coordinates $\zt$ in $L$ and $z$ in $L^\ast$. There is then a distinguished global contact form on $\sJ^1L^\ast\simeq \sT^\ast M\times\R^\ast$, namely $\eta=\xd z-\theta_M$, where $\theta_M=p_i\,\xd q^i$ is the canonical Liouville 1-form on $\sT^\ast M$.
\end{example}
\begin{example}\label{AB}
The M\"obius band, as the line bundle $B\to S^1$ of Example \ref{ex:4}, can be described by two charts. If $x\notin \Z$ then in every equivalence class $[(x,y)]$ there is just one representative with $x\in]0,1[$. For further purposes, it will be convenient to reparameterize $x\mapsto \pi x$. We take then $\mathcal{O}=\{[(x,y)]: x\in]0,\pi[\}$. Similarly, if $x\slash \pi\neq\frac{2k+1}{2}$, then in every equivalence class $[(x,y)]$ there is just one representative with $x\in\big]\frac{\pi}{2},\frac{3\pi}{2}\big[$, we take then $\mathcal{U}=\{[(x,y)]: x\in\big]\frac{\pi}{2},\frac{3\pi}{2}\big[\}$. Both $\mathcal{O}$ and $\mathcal{U}$ are open and we have $\mathcal{O}\cup\mathcal{U}=B$. We can use in $B$ the following two charts:
$$\varphi:\mathcal{O}\rightarrow \R^2\,, \quad\varphi([(x,y)])=(x, y)\ \text{for}\ x\in \big]0,\pi\big[$$ and
$$\psi:\mathcal{U}\rightarrow \R^2\,,\quad\psi([(x',y')])=(x', y')\ \text{for}\ x'\in\Big]\frac{\pi}{2},\frac{3\pi}{2}\Big[.$$
The transition function
$$\psi\circ\varphi^{-1}(x,y)=(x,y)\ \text{for}\ x\in\Big]\frac{\pi}{2},\pi\Big[$$
and
$$\psi\circ\varphi^{-1}(x,y)=(x+\pi,-y)\ \text{for}\ x\in\Big]0,\frac{\pi}{2}\Big[$$
is linear in the second coordinate, therefore it defines a line bundle structure on $B$.
The first jet bundle $\sJ^1B$ of the M\"obius band can be, again, defined by two charts. Denoting with $\sj^1\varrho$ the projection $\sj^1 \varrho: \sJ^1B\rightarrow B$, as the domains of the two charts we take
$\bar{\mathcal{O}}=(\sj^1\varrho)^{-1}(\mathcal{O})$  and  $\bar{\mathcal{U}}=(\sj^1\varrho)^{-1}(\mathcal{U})$. The adapted coordinates in $\bar{\mathcal{O}}$ for the map $\bar{\varphi}$ are
$$(x, p, \zt)\in\,]0,\pi[\times\R\times \R,$$
while the adapted coordinates in $\bar{\mathcal{U}}$ for the map $\bar{\psi}$ are
\be\label{tr1}(x',p',\zt')\in\,\Big]\frac{\pi}{2},\frac{3\pi}{2}\Big[\times\R\times \R,\ee
with the transformation rule
\be\label{tr2}\bar{\psi}\circ\bar{\varphi}^{-1}(x,p,\zt)=(x,p,\zt)\quad\text{if}\quad x\in\Big]
\frac{\pi}{2}, \pi\Big[\ee
and
$$ \bar{\psi}\circ\bar{\varphi}^{-1}(x,p,\zt)=(x+\pi,-p,-\zt)
\quad\text{if}\quad x\in\Big]0,\frac{\pi}{2}\Big[.$$
The coordinates $p$ and $p'$ change sign in the same way as $y$ and $y'$ because if a section $\sigma$ is given in chart $\mathcal{O}$ as $x\mapsto (x, y(x))$ then the first jet $\sj^1\sigma$ in chart $\bar{\mathcal{O}}$ has coordinates $p(\sj^1\sigma(x))=\frac{\pa y}{\pa x}$ and $\tau(\sj^1\sigma(x))=y(x)$.
The contact distribution over $\bar{\mathcal{O}}$ is spanned by $(\partial_{x}+p\partial_\zt, \partial_{p})$, and over $\bar{\mathcal{U}}$ by
$(\partial_{x'}+p'\partial_{\zt'}, \partial_{p'})$. It is easy to see that for points for which $x\in\big]0,\frac{\pi}{2}\big[$ we have
$\partial_{p'}=-\partial_{p}$, but of course this yields a well-defined global distribution $C$. The corresponding local contact form forms on $\bar{\mathcal{O}}$ are equivalent to
$\eta_\mathcal{O}=\xd\zt-p\,\xd x$, and the local contact forms on $\bar{\mathcal{U}}$ are equivalent to $\eta_\mathcal{U}=\xd\zt'-p'\,\xd x'$. Again, if $x\in\big]0,\frac{\pi}{2}\big[$, then we have
$\eta_\mathcal{O}=-\eta_\mathcal{U}$, while for $x\in\big]\frac{\pi}{2},\pi\big[$ we have $\eta_\mathcal{O}=\eta_\mathcal{U}$. Therefore we cannot agree local contact forms for these two charts, so the contact distribution $C$ is well defined, but there is no global contact form for $C$. Of course, it follows also from the general fact (see Proposition \ref{triv}).

Note that the vector bundle $\sJ^1 B\rightarrow S^1$ is trivializable. Let us consider the following two sections of the bundle $B\rightarrow S^1$, written in coordinates from the chart $(\mathcal{O},\varphi)$,
$$\sigma_1(x)=(x,\sin(x)), \qquad \sigma_2(x)=(x, \cos(x)).$$
It is easy to check that they are well defined on the whole $S^1$. Using the chart $(\bar{\mathcal{O}},\bar\varphi)$, we can write the first jet prolongations of $\sigma_1$ and $\sigma_2$ as
$$\sj^1\sigma_1(x)=(\xi,\sin(x), \cos(x)), \qquad \sj^1\sigma_2(x)=(\xi, \cos(x), -\sin(x)).$$
The two sections $\sj^1\sigma_1$ and $\sj^1\sigma_2$ are global, non-vanishing, and linearly independent sections of $\sJ^1 B\rightarrow S^1$. This shows that $\sJ^1L$ may be trivializable even for a non-trivializable $L$.
\end{example}

An interesting observation is that the tangent lifts of $\Rt$-principal bundles (cf. (\ref{tl})), combined with the tangent lifts of symplectic structures (see, e.g., \cite{Grabowski:1995}), define the \emph{tangent lifts of contact structures} as follows.

\begin{theorem} If $\zp:P\to M$ is a symplectic $\Rt$-principal bundle, with  an $\Rt$-action $h$ and a 1-homogeneous symplectic form $\zw$, then $\sT P$ is canonically a symplectic $\Rt$-principal bundle over the Atiyah algebroid $\sA P$, equipped with the lifted action $\dt h$ and the lifted symplectic structure $\dt\zw$.
\end{theorem}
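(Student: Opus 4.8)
The plan is to verify the three defining properties of a symplectic $\Rt$-principal bundle for the pair $(\sT P,\dt\zw)$ over $\sA P$, of which two are already at hand. The principal-bundle structure was recorded above (see \eqref{la}): $\dt h$ is a free $\Rt$-action on $\sT P$ whose orbit space $\sT P/\Rt$ is canonically the Atiyah algebroid $\sA P$, so $\sT P\to\sA P$ is an $\Rt$-principal bundle. That the tangent lift $\dt\zw$ is again symplectic belongs to the general theory of tangent lifts of differential forms \cite{Grabowski:1995}: it is closed because the tangent lift commutes with the exterior derivative, $d(\dt\zw)=\dt(d\zw)=0$, and it is non-degenerate because, in adapted coordinates $(y^A,\dot y^A)$ on $\sT P$ induced from coordinates $(y^A)$ on $P$, writing $\zw=\frac{1}{2}\zw_{AB}\,dy^A\wedge dy^B$, the $d\dot y^A\wedge dy^B$ part of $\dt\zw$ reproduces the invertible matrix $(\zw_{AB})$ while its $d\dot y^A\wedge d\dot y^B$ part vanishes, forcing the coefficient determinant to equal $\pm(\det\zw_{AB})^2\neq0$. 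Hence the only genuinely new point is the $1$-homogeneity of $\dt\zw$ with respect to $\dt h$, i.e. $(\dt h_s)^\ast(\dt\zw)=s\,\dt\zw$.

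For this I would use the naturality of the tangent lift: for every diffeomorphism $\Phi$ of $P$ one has $(\sT\Phi)^\ast(\dt\zw)=\dt(\Phi^\ast\zw)$, and the lifting operation $\zw\mapsto\dt\zw$ is $\R$-linear, so $\dt(c\,\zw)=c\,\dt\zw$ for a constant $c$. Taking $\Phi=h_s$, recalling that $\dt h_s=\sT h_s$, and using the homogeneity hypothesis $(h_s)^\ast\zw=s\,\zw$, we obtain
\begin{equation*}
(\dt h_s)^\ast(\dt\zw)=(\sT h_s)^\ast(\dt\zw)=\dt\big((h_s)^\ast\zw\big)=\dt(s\,\zw)=s\,\dt\zw,
\end{equation*}
which is precisely the required homogeneity. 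The same conclusion can be reached infinitesimally: the Euler vector field $\n$ generating $h$ satisfies $\mathcal{L}_\n\zw=\zw$ (this is what $1$-homogeneity of $\zw$ means), its tangent lift $\dt\n$ is the Euler vector field of the lifted action $\dt h$, and since Lie differentiation commutes with the tangent lift we get $\mathcal{L}_{\dt\n}(\dt\zw)=\dt(\mathcal{L}_\n\zw)=\dt\zw$.

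Combining the three steps, $\dt\zw$ is a $1$-homogeneous symplectic form on the $\Rt$-principal bundle $\sT P\to\sA P$, so $(\sT P,\dt h,\dt\zw)$ is a symplectic $\Rt$-principal bundle over $\sA P$, which is the assertion (and, through the correspondence recalled in Section~\ref{sec:3}, it endows $\sA P$ with a canonical contact structure, the tangent lift of the one on $M$). I do not expect a serious obstacle here: all the analytic content is packaged in standard properties of the tangent lift -- commutation with $d$ and with pullbacks, $\R$-linearity, and preservation of non-degeneracy. The one place demanding care is the homogeneity weight: one must confirm that the tangent lift neither raises nor lowers the degree of homogeneity, i.e. that $\dt$ sends a $1$-homogeneous form to a $1$-homogeneous form rather than, say, a $0$- or $2$-homogeneous one. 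The short pullback computation above is exactly what pins this down, and it is the conceptual heart of the statement.
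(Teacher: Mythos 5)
Your proof is correct, but it reaches the key point --- the $1$-homogeneity of $\dt\zw$ --- by a genuinely different route than the paper. The paper's proof is a local computation: it writes $\zw$ in a local trivialization as $\zw=d\zt\we\zh+\zt\, d\zh$ for a local contact form $\zh$ on $M$, computes $\dt\zw$ explicitly in the adapted coordinates $(x^i,\zt,\dot x^j,\dot\zt)$, and reads off degree-$1$ homogeneity by weight-counting (with $\zt,\dot\zt$ of degree $1$ and the remaining coordinates of degree $0$), citing \cite{Grabowski:1995} for the fact that $\dt\zw$ is again symplectic. You instead argue globally: naturality of the tangent lift under diffeomorphisms, $(\sT\Phi)^\ast(\dt\zw)=\dt(\Phi^\ast\zw)$, together with $\R$-linearity of the lift and the identity $(\dt h)_s=\sT h_s$ from (\ref{tl}), gives $(\dt h_s)^\ast(\dt\zw)=\dt(s\,\zw)=s\,\dt\zw$ in one line. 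This is cleaner (no choice of trivialization or local contact form), it isolates exactly which properties of the lift are used, and it parallels how the paper itself proves homogeneity of $\zw_P$ in Proposition \ref{prop:3}; what the paper's computation buys in exchange is the explicit coordinate formula for $\dt\zw$, which exhibits the lifted structure concretely. You also spell out the two points the paper treats as already established (that $\sT P\to\sA P$ is an $\Rt$-principal bundle, from the subsection on lifted actions, and that $\dt\zw$ is symplectic, from \cite{Grabowski:1995}), which is harmless. One small caveat: your alternative infinitesimal argument via $\mathcal{L}_{\dt\n}(\dt\zw)=\dt(\mathcal{L}_{\n}\zw)=\dt\zw$ only yields homogeneity on the identity component $s>0$, since $\Rt$ is disconnected; homogeneity for $s<0$ needs the pullback argument (or an extra word about the $s=-1$ reflection), so the pullback computation should remain the primary proof, as indeed you present it.
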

\begin{proof}
We have to check that $\dt\zw$ is 1-homogeneous with respect to $\dt h$. Let $(x^i,\zt)$ be local coordinates on $P$, associated with a local trivialization of the principal bundle, and let $\zh$ be a local contact form on $M$ representing the contact structure on $M$. Then the symplectic form $\zw$ can be written as (see \cite{Bruce:2017,Grabowska:2022})
$$\zw(x,\zt)=\zh(x)\we\xd\zt -\zt\cdot\xd\zh(x).$$
In the adapted coordinates $(x^i,\zt,\dot x^j,\dot\zt)$ on $\sT P$, the lifted $\Rt$-action reads (see (\ref{la}))
$$(\dt h)_s(x^i,\zt,\dot x^j,\dot\zt)=(x^i,s\zt,\dot x^j,s\dot\zt),$$
and the lifted symplectic form (see \cite{Grabowski:1995})
$$\dt\zw=\zh\we \xd\dot\zt+\dt\zh\we \xd\zt-\dot\zt\cdot \xd\zh-\zt\cdot\dt\zh.$$
Since $\dt\zh$ is a 1-form on $\sT M$, it involves only coordinates $(x^i,\dot x^j)$, so it is homogeneous of degree 0. This immediately implies that $\dt\zw$ is homogeneous of degree 1, since
$(\zt,\dot\zt)$ are homogeneous of degree $1$.

\end{proof}

The above proposition implies that there is a canonical contact structure on the Atiyah algebroid $\sA P$. This contact structure we will call the \emph{principal lift} of the contact structure on $M$. This lift is probably new in the literature and difficult to express in the traditional language of contact geometry. It is a contact analog of the tangent lift of a symplectic structure.

\section{Contact Hamiltonian mechanics}\label{sec:ham}
The term `Hamiltonian mechanics' is usually connected with the symplectic structure of a phase space. In most cases, the phase space is the cotangent bundle $\sT^\ast M$, and the symplectic structure is the canonical form $\omega_M$. With a function $H$ on the phase space (Hamiltonian function) we associate the Hamiltonian vector field $X_H$ by
$$
\xd H=\omega_M(\cdot, X_H).
$$
Integral curves of the Hamiltonian vector field are phase trajectories of the mechanical system. In the Tulczyjew's formulation of mechanics, we replace $\zw_M$ with the map
$$\beta_M:\sT\sT^\ast M\rightarrow \sT^\ast\sT^\ast M,$$
which is determined by $\omega_M$,
$$\sT\sT^\ast M\ni w\longmapsto \beta_M(w)=\omega_M(\cdot,w)\in\sT^\ast\sT^\ast M.$$
A mechanical system is then described by a subset $\mathcal{D}$ of $\sT\sT^\ast M$, called the \emph{dynamics}, which is understood as an implicit first-order differential equation. The phase trajectories are in this case curves in $\sT^*M$ such that their tangent prolongations belong to $\mathcal{D}$. In the case of Hamiltonian mechanics with the Hamiltonian $H$, the dynamics is just the image of the Hamiltonian vector field. In terms of $\beta_M$ we can write
$$\mathcal{D}=\beta_M^{-1}\left(\xd H(\sT^\ast M)\right)=X_H(\sT^\ast M).$$
Passing from $\omega_M$ to $\beta_M$ is only a slight change of interpretation, but an important one, because it allows for a Hamiltonian description of more general mechanical systems for which there is no Hamiltonian understood as a single function on the phase space.

In the following, we shall construct the contact analog of the map $\beta_M$ in the case when the phase space carries a contact structure. More precisely, we replace the linear symplectic manifold $\sT^\ast M$ with the linear contact manifold $\sJ^1 L^\ast$ for some line bundle $L$ over M. Note that all linear contact manifolds are equivalent to one of $\sJ^1 L^\ast$ \cite{Grabowski:2013} like all linear symplectic manifolds are equivalent to $\sT^*M$. This choice of contact manifolds is important for our construction of contact Tulczyjew triples, but we can actually construct the contact Hamiltonian mechanics starting from an arbitrary contact structure, as we have done in \cite{Grabowska:2022}. The point there, which is different from most approaches to contact Hamiltonian mechanics in the literature, is that Hamiltonians are not functions on the contact manifold $Q$ itself, but 1-homogeneous functions on the corresponding symplectic $\Rt$-principal bundle over $Q$.

\subsection{Contact Hamiltonian vector field}\label{s4.1}
Let us recall that the symplectic $\Rt$-principal bundle associated with the contact manifold $\sJ^1L^\ast$ is $\sT^*L^\ti$. Before we pass to the Hamiltonian side of the contact Tulczyjew triple, let us fix a line bundle $L\rightarrow M$ and calculate in coordinates the Hamiltonian vector field associated with a 1-homogeneous Hamiltonian on $\sT^\ast L^\times$. Recall that $\sT^\ast L^\times\rightarrow \sJ^1L^\ast$ is canonically a symplectic $\R^\times$-principal bundle, with the $\Rt$-action being the phase lift of the obvious action $k$ on $L^\ti$ (from now on, with $\mathcal{h}$ we will denote contact Hamiltonians) and the canonical symplectic form $\zw_{L^\ti}$ on $\sT^*\Lt$. Starting with coordinates $(x^i,\zt)$ on $\Lt$, associated with a local trivialization, the action ${\dts}k$ in the adapted coordinates reads
$$(\dts k)_s(x^i,\zt,\pi_j,z)=(x^i,s\zt,s\pi_j,z).$$
Any 1-homogeneous function on $\sT^\ast L^\times$ is then of the form
$$H(x^i,\zt,\pi_j,z)=\zt\cdot \mathcal{h}\left(x^i, \frac{\pi_j}{\zt},z\right),$$
where $h$ is a function of coordinates $(x^i, p_j, z)$ on $\sJ^1L^*$.
Using the canonical symplectic form $\omega_{L^\times}=\xd\pi_i\wedge \xd x^i+\xd z\wedge \xd\zt$, we get the Hamiltonian vector field in the form
\begin{equation}\label{e:1}
X_H=\frac{\partial \mathcal{h}}{\partial p_i}\partial_{x^i}+\zt\frac{\partial \mathcal{h}}{\partial z}\partial_\zt-\zt\frac{\partial \mathcal{h}}{\partial x^i}\partial_{\pi_j}+\left(\frac{\pi_k}{\zt}\frac{\partial \mathcal{h}}{\partial p_k}-\mathcal{h}\right)\partial_z.
\end{equation}
From (\ref{e:1}) we immediately see that $X_H$ is projectable on $\sJ^1L^\ast$.
The projected vector field reads
\begin{equation}\label{e:2}
X_h=\frac{\partial \mathcal{h}}{\partial p_i}\partial_{x^i}-\left(\frac{\partial \mathcal{h}}{\partial x^j}+p_j\frac{\partial \mathcal{h}}{\partial z}\right)\partial_{p_j}+\left(p_k\frac{\partial \mathcal{h}}{\partial p_k}-\mathcal{h}\right)\partial_z,
\end{equation}
so we have recovered in local coordinates the formula which is well known in the literature on contact Hamiltonians. This time, however, this formula has an unambiguous global meaning valid also for contact structures that are nontrivial.

Every 1-homogeneous Hamiltonian $H$ on $\sT^*\Lt$ corresponds to a certain section of the line bundle $$L_{\sT^\ast L^\times}\simeq \sJ^1L^\ast\times_M L\raa\sJ^1L^\ast,$$
which in the adapted coordinates $(x^i, p_j, z, Z)$ is given by the condition $Z=h(x^i,p_j, z)$. Formula (\ref{e:2}) is therefore the expression for the contact Hamiltonian vector field generated by the section $\mathcal{h}$ of the line bundle $L^*_{\sT^\ast L^\times}$. Taking into account the identification $L_{\sT^\ast L^\times}\simeq \sJ^1L^\ast\times_M L$, we can write the contact Hamiltonian $\mathcal{h}$ as a map
$$\mathcal{h}: \sJ^1L^\ast\longrightarrow L^\ast$$
covering the identity on $M$. Using the same letter $\mathcal{h}$ to describe the section may be confusing, however, this is what we usually do with ordinary functions on a manifold -- we write $f$ for a function $f:N\rightarrow\R$ as well as its expression $f(y^a)$ in coordinates. The following is a useful observation we will use while considering contact Legendre maps.
\begin{proposition}
The line bundle $L_{\sT^*\Lt}$ is trivializable if and only if $L$ is trivializable.
\end{proposition}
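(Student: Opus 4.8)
The plan is to reduce the statement to the explicit description of $L_{\sT^\ast L^\times}$ that has already been recorded above. By Proposition \ref{prop:5} applied to the $\Rt$-principal bundle $P=L^\times$ (for which $L_P\simeq L$ by Proposition \ref{prop:6}), one has the canonical identification
$$L_{\sT^\ast L^\times}\simeq \sJ^1L^\ast\times_M L=\zp^\ast L,$$
the pull-back of the line bundle $L\to M$ along the canonical vector bundle projection $\zp:\sJ^1L^\ast\to M$. Thus the whole question becomes whether $\zp^\ast L$ is trivializable precisely when $L$ is. I will treat trivializability in the usual way for line bundles, via the existence of a nowhere-vanishing global section (equivalently, a global frame).

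The `if' direction is immediate: the pull-back of a trivializable bundle is trivializable, since a global frame of $L$ pulls back to a global frame of $\zp^\ast L$.

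For the `only if' direction the key observation is that $\zp:\sJ^1L^\ast\to M$ is a \emph{vector} bundle projection, hence it admits a distinguished global section, namely the zero section $s_0:M\to\sJ^1L^\ast$, satisfying $\zp\circ s_0=\id_M$. Consequently $s_0^\ast(\zp^\ast L)\simeq(\zp\circ s_0)^\ast L\simeq L$. Since pulling back a trivializable bundle again yields a trivializable bundle, trivializability of $L_{\sT^\ast L^\times}\simeq\zp^\ast L$ forces trivializability of $L$. In the language of sections this reads: a nowhere-vanishing section of $\zp^\ast L$ restricts along $s_0$ to a nowhere-vanishing section of $L$, so a global trivializing frame upstairs produces one downstairs.

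There is essentially no hard step here; the entire content is the existence of the zero section $s_0$ of the vector bundle $\zp$, which lets one transport trivializations in both directions between $M$ and $\sJ^1L^\ast$. The only thing to be careful about is the initial identification $L_{\sT^\ast L^\times}\simeq\zp^\ast L$, which is exactly the specialization of Proposition \ref{prop:5} to $P=L^\times$ combined with $L_{L^\times}\simeq L$; once this is in place, the statement follows formally.
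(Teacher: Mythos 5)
Your proof is correct and follows essentially the same route as the paper: the paper also reduces everything to restricting a non-vanishing section along the zero section of the vector bundle $\sJ^1L^*\to M$, writing the section of $L_{\sT^*\Lt}$ as a map $\mh:\sJ^1L^*\to L^*$ and setting $\zs(x)=\mh(0_x)$, which is precisely your pull-back identity $s_0^*(\zp^*L)\simeq(\zp\circ s_0)^*L\simeq L$ phrased without the pull-back formalism. The only cosmetic difference is that the paper lands in $L^*$ rather than $L$ and then invokes the fact that a line bundle is trivializable iff its dual is.
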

\begin{proof}
Suppose that $L_{\sT^*\Lt}$ is trivializable and let $\mathcal{h}: \sJ^1L^\ast\longrightarrow L^\ast$ represents a non-vanishing section. Since $L_{\sT^*\Lt}$ is a vector bundle over $M$, we can identify points $x\in M$ with the corresponding zero vectors $0_x$ in $L_{\sT^*\Lt}$. Now, we define a non-vanishing section $\zs$ of $L^*$ by $\zs(x)=\mathcal{h}(0_x)$. Of course, $L^*$ is trivializable if and only if $L$ is trivializable. The converse is trivial.

\end{proof}
\subsection{The Hamiltonian side of the contact Tulczyjew triples}
Let us now pass to the Hamiltonian side of the contact Tulczyjew triple. It can be obtained from the classical map $\beta_{L^\times}$ by means of Theorem \ref{t1}.

\begin{theorem}
The Tulczyjew double vector bundle isomorphism
\be\label{b1}\xymatrix@C-20pt@R-10pt{
& \sT^\ast\sT^\ast \Lt  \ar[dr] \ar[ddl]
 & & & \sT\sT^\ast \Lt\ar[dr]\ar[ddl]\ar[lll]_{\beta_\Lt}&  \\
 & & \sT \Lt\ar[ddl]
 & & & \sT \Lt \ar[ddl]\ar[lll]\\
 \sT^\ast \Lt\ar[dr]
 & & & \sT^\ast \Lt\ar[dr]\ar[lll] & &  \\
 & \Lt& & & \Lt\ar[lll] &
}\ee
induces canonically the double vector bundle isomorphism described by the following commutative diagram of surjective vector bundle morphisms.
\be\label{b2}\xymatrix@C-18pt@R-10pt{
& \sJ^1 L^\ast_{\sT^\ast L^\times}  \ar[dr] \ar[ddl]
 & & & \sA\sT^\ast L^\times\ar[dr]\ar[ddl]\ar[lll]_{\beta_0}&  \\
 & & \sA \Lt\ar[ddl]
 & & & \sA \Lt \ar[ddl]\ar[lll]\\
 \sJ^1L^*\ar[dr]
 & & & \sJ^1L^*\ar[dr]\ar[lll] & &  \\
 & M& & & M\ar[lll] &
}\ee
Moreover, $\sJ^1 L^\ast_{\sT^\ast L^\times}$ and $\sA\sT^\ast L^\times$ possess canonical contact structures and the map $\zb_0$ is additionally a contactomorphism.
\end{theorem}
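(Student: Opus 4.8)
The plan is to realise diagram (\ref{b2}) as the quotient of diagram (\ref{b1}) by the lifted $\Rt$-actions, so that $\zb_0$ is simply the map induced by $\zb_\Lt$ on orbit spaces. The starting point is to regard $P:=\sT^\ast\Lt$ as a symplectic $\Rt$-principal bundle over $\sJ^1L^\ast$: by the computation carried out in the proof of Proposition \ref{prop:3}, the canonical form $\zw_\Lt$ on $\sT^\ast\Lt$ is $1$-homogeneous for the phase-lifted action $k_s:={\dts}h_s$, while Proposition \ref{prop:4} identifies its base with $\sJ^1L^\ast$. Lifting $k_s$ once more yields exactly the two principal bundles on top of (\ref{b1}): the tangent lift gives $\sT\sT^\ast\Lt\to\sA\sT^\ast\Lt$ with action ${\dt}k_s$, and the phase lift gives $\sT^\ast\sT^\ast\Lt\to\sJ^1L^\ast_{\sT^\ast\Lt}$ with action ${\dts}k_s$ (Proposition \ref{prop:4} applied to $P$).

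By Theorem \ref{t1} applied to the $\Rt$-principal bundle $\Lt$, the map $\zb_\Lt$ intertwines precisely these two actions ${\dt}k_s$ and ${\dts}k_s$. Being an $\Rt$-equivariant diffeomorphism, it descends to a diffeomorphism $\zb_0:\sA\sT^\ast\Lt\to\sJ^1L^\ast_{\sT^\ast\Lt}$ of orbit spaces, which is the arrow of (\ref{b2}). To upgrade this to a double vector bundle isomorphism I would track each leg of the double vector bundles in (\ref{b1}) under the quotient. Both $\sT\sT^\ast\Lt$ and $\sT^\ast\sT^\ast\Lt$ are double vector bundles over $\Lt$ with side bundles $\sT\Lt$ and $\sT^\ast\Lt$; the action projects to ${\dt}h_s$ on the $\sT\Lt$-leg and to ${\dts}h_s$ on the $\sT^\ast\Lt$-leg, so these legs reduce to $\sA\Lt$ and $\sJ^1L^\ast$ respectively, and the base $\Lt$ reduces to $M$. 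Since every map in the classical triple is an $\Rt$-equivariant double vector bundle morphism (Theorem \ref{t1}), the reduced objects carry the double vector bundle structures displayed in (\ref{b2}) and $\zb_0$, being induced by an equivariant double vector bundle isomorphism, is itself one.

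For the contact statement I would use the symplectic $\Rt$-principal bundle picture of contact structures. The base $\sJ^1L^\ast_{\sT^\ast\Lt}$ carries a canonical contact structure because $\sT^\ast\sT^\ast\Lt$ is the cotangent bundle of the $\Rt$-principal bundle $\sT^\ast\Lt$, whence its canonical form is $1$-homogeneous (the argument of Proposition \ref{prop:3} applied to $P$) and the correspondence of \cite{Grabowski:2013} applies. The base $\sA\sT^\ast\Lt$ carries a canonical contact structure by the tangent lift of contact structures: $\sT\sT^\ast\Lt$ is a symplectic $\Rt$-principal bundle over it with the $1$-homogeneous form ${\dt}\zw_\Lt$. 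Finally, $\zb_\Lt$ is the canonical anti-symplectomorphism relating ${\dt}\zw_\Lt$ with $\zw_{\sT^\ast\Lt}$ up to sign; together with its $\Rt$-equivariance this makes $\zb_\Lt$ an isomorphism of symplectic $\Rt$-principal bundles in the sense of the definition preceding Proposition \ref{cm} (with constant $k=-1$). By Proposition \ref{cm} the induced base map $\zb_0$ is then a contactomorphism.

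The routine parts -- the explicit reduced coordinates and the verification that $\zb_\Lt^\ast\zw_{\sT^\ast\Lt}=-{\dt}\zw_\Lt$ -- I would leave to a coordinate description. The step requiring most care is the double vector bundle reduction: one must keep straight that the two \emph{different} side bundles of $\sT\sT^\ast\Lt$ and $\sT^\ast\sT^\ast\Lt$ are quotiented by \emph{differently} lifted actions (the tangent lift on the $\sT\Lt$-side, the phase lift on the $\sT^\ast\Lt$-side), so that the reduced legs are genuinely $\sA\Lt$ and $\sJ^1L^\ast$ and all structure maps remain equivariant double vector bundle morphisms after reduction. Granting this bookkeeping, the whole statement follows formally from Theorem \ref{t1} and Proposition \ref{cm}.
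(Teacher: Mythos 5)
Your proposal is correct and takes essentially the same approach as the paper: diagram (\ref{b2}) arises by quotienting diagram (\ref{b1}) by the lifted $\Rt$-actions, using Theorem \ref{t1} for equivariance, with the contact claims coming from the canonical contact structures on jet bundles and Atiyah algebroids plus Proposition \ref{cm}. The paper's own proof is simply a terser statement of this argument, delegating the homogeneity and equivariance bookkeeping you spell out to those earlier results.
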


\medskip\noindent\begin{proof} According to Theorem \ref{t1}, all maps in diagram (\ref{b1}) are surjective morphisms of $\Rt$-principal bundles. We obtain a diagram (\ref{b2}) just passing to the maps induced on the base manifolds of these principal bundles.
For contact structures, we refer to canonical contact structures on jet bundles and Atiyah algebroids described in Section \ref{ss1} and Proposition \ref{cm}.

\end{proof}
\begin{remark}
Since $\zb_0$ is a canonical diffeomorphism, the Atiyah algebroid $\sA\sT^\ast L^\times$ has a canonical structure of a first jet bundle, and the first jet bundle $\sJ^1 L^\ast_{\sT^\ast L^\times}$ has a canonical structure of a Lie algebroid. It would be interesting to describe these structures in intrinsic terms, not referring to the map $\zb_0$. We are guessing that this can be done by means of certain natural Jacobi brackets.
\end{remark}
\noindent We will use the following local coordinates:
\begin{itemize}
\item local coordinates $(x^i)$ on $M$;
\item the corresponding affine coordinates $(x^i,\zt)$ on $\Lt$ (or $L$);
\item the dual coordinates $(x^i,z)$ on $L^*$;
\item the adapted local coordinates $(x^i,\zt,\zp_j,z)$ on $\sT^*\Lt$;
\item the adapted local coordinates $(x^i,\zt,\zp_j,z,\dot x^k,\dot\zt,\dot\zp_l,\dot z)$ on $\sT\sT^*\Lt$;
\item the adapted local coordinates $(x^i,\zt,\zp_j,z,P_k,P_\zt,P^l,P_z)$ on $\sT^*\sT^*\Lt$.
\end{itemize}

Since both side bundles $\sT^\ast\sT^\ast L^\times$ and $\sT\sT^* L^\times$ of the classical Tulczyjew triple are canonically isomorphic double vector bundles with compatible  structures of  $\R^\times$-principal bundles, the quotiening by the group actions produces again a pair of isomorphic double vector bundles $\sJ^1L^*_{\sT^*\Lt}$ and $\sA\sT^*\Lt$, with the induced coordinates
$(x^i,p_j,z,Z_k,Z^l,Z_z,Z)$ and $(x^i,p_j,z,\dot x^k,\dot p_l,\dot z,t)$, respectively. Here, the coordinates on $L^*_{\sT^*\Lt}=\sJ^1L^*\ti_ML^*$ are $(x^i,p_j,z,Z)$. The projection $\sT^*\sT^*\Lt\to\sJ^1L^*_{\sT^*\Lt}$ is given by
$$
\left(x^i,p_j,z,Z_k,Z^l,Z_z,Z\right)=\left(x^i,\frac{\zp_j}{\zt},z,
\frac{P_k}{\zt},\,P^l,\frac{P_z}{\zt},P_\zt+\frac{\zp_rP^r}{\zt}\right),
$$
and the projection $\sT\sT^* L^\times\to\sA\sT^*\Lt$ by
$$
\left(x^i,p_j,z,\dot x^k,\dot p_l,\dot z,t\right)=\left(x^i,\frac{\zp_j}{\zt},z,\dot x^k,\frac{\dot\zp_l}{\zt}-\frac{\zp_l\dot\zt}{\zt^2},\dot z,\frac{\dot\zt}{\zt}\right).
$$
The isomorphism $\zb_0$ is now
$$
\left(x^i,p_j,z,Z_k,Z^l,Z_z,Z\right)\circ\zb_0=\left(x^i,p_j,z,\dot x^k,-\dot p_l-p_lt,t,
p_r\dot x^r-\dot z\right).
$$

The induced double vector bundle structures are, respectively,
$$
\xymatrix@C-10pt{
 & \sJ^1L^\ast_{\sT^\ast L^\times}\ar[dl]\ar[dr] & \\
\sJ^1L^\ast\ar[dr] & \sJ^1L^\ast\ar[d]\ar@{ (->}[u] & \sA L^\times\ar[dl] \\
 & M &
}\qquad
\xymatrix@C-40pt{
 & (x^i,p_j,z,Z_k, Z^l, Z_z,Z)\ar[dl]\ar[dr] & \\
(x^i,p_j, z)\ar[dr] & (x^i, Z_k, Z)\ar[d]\ar@{ (->}[u] & (x^i, Z^l, Z_z)\ar[dl] \\
 & (x^i) &
}
$$
and
$$
\xymatrix@C-10pt{
 & \sA\sT^*\Lt\ar[dl]\ar[dr] & \\
\sJ^1L^\ast\ar[dr] & \sJ^1L^\ast\ar[d]\ar@{ (->}[u] & \sA L^\times\ar[dl] \\
 & M &
}\qquad
\xymatrix@C-32pt{
 & (x^i,p_j,z,\dot x^l,\dot p_k,\dot z,t)\ar[dl]\ar[dr] & \\
(x^i,p_j, z)\ar[dr] & (x^i,-\dot p_k-p_kt,\, p_r\dot x^r-\dot z)\ar[d]\ar@{ (->}[u] & (x^i, \dot x^l, t)\ar[dl] \\
 & (x^i) &
}
$$

\noindent To have a dynamical part in the picture, we have to modify slightly commutative diagram \ref{b2}. For, we shall use the anchor map of the Atiyah algebroid,
$${\cA}: \sA\sT^\ast L^\times\rightarrow\sT\sJ^1L^\ast.$$
The full commutative diagram associated with the anchor is
\be\label{cA}\xymatrix@C-18pt@R-10pt{
& \sA\sT^\ast L^\times \ar[rrr]^\cA \ar[dr] \ar[ddl]
 & & & \sT\sJ^1L^*\ar[dr]\ar[ddl]&  \\
 & & \sA \Lt\ar[rrr]\ar[ddl]
 & & & \sT M \ar[ddl]\\
 \sJ^1L^*\ar[dr]\ar[rrr]
 & & & \sJ^1L^*\ar[dr] & &  \\
 & M\ar[rrr]& & & M &
}\ee

\medskip\noindent The map $\sA \Lt\to\sT M$ is of course the anchor of the Atiyah algebroid $\sA\Lt$. In coordinates, the anchors are just forgetting the coordinate $t$. The following is now straightforward.
\begin{proposition}
The composition of maps $\zb=\cA\circ\zb_0^{-1}$ defines a morphism of double vector bundles
$$\beta:\sJ^1 L_{\sT^\ast L^\times}\rightarrow \sT\sJ^1 {L^\ast},$$
with the corresponding commutative diagram of vector bundle morphisms
\be\label{Hs}\xymatrix@C-18pt@R-10pt{
& \sJ^1 L^\ast_{\sT^\ast L^\times} \ar[rrr]^\zb \ar[dr] \ar[ddl]
 & & & \sT\sJ^1L^*\ar[dr]\ar[ddl]&  \\
 & & \sA \Lt\ar[rrr]\ar[ddl]
 & & & \sT M \ar[ddl]\\
 \sJ^1L^*\ar[dr]\ar[rrr]
 & & & \sJ^1L^*\ar[dr] & &  \\
 & M\ar[rrr]& & & M &
}\ee
\end{proposition}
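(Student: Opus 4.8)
The plan is to read $\beta$ as what it is by definition, namely the composite $\cA\circ\zb_0^{-1}$, and to argue that each factor is a morphism of double vector bundles. The preceding theorem already asserts that $\zb_0$ is a double vector bundle \emph{iso}morphism, so $\zb_0^{-1}$ is a double vector bundle morphism as well. Since composites of such morphisms are again such morphisms, and since the source and target double vector bundles of $\cA\circ\zb_0^{-1}$ are precisely $\sJ^1L^\ast_{\sT^\ast L^\times}$ and $\sT\sJ^1L^\ast$, the statement reduces to the single claim that the anchor $\cA$ is a morphism of double vector bundles, together with a check that the two diagrams combine into (\ref{Hs}).

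For the anchor I would argue directly from the coordinate description recorded above. Conceptually, $\cA$ is the anchor of the Atiyah algebroid $\sA\sT^\ast L^\times$, i.e. the map induced on the quotient by the tangent prolongation of the principal bundle projection $\sT^\ast L^\times\to\sJ^1L^\ast$; this is exactly why it respects the linear structures. Explicitly, in the coordinates introduced for $\sA\sT^\ast L^\times$ and $\sT\sJ^1L^\ast$ the anchor is the map
$$(x^i,p_j,z,\dot x^k,\dot p_l,\dot z,t)\longmapsto(x^i,p_j,z,\dot x^k,\dot p_l,\dot z)$$
that forgets the coordinate $t$. This is visibly linear in the fibres over $\sJ^1L^\ast$ (the base coordinates $(x^i,p_j,z)$ being preserved), and on the other leg it restricts to the Atiyah anchor $\sA\Lt\to\sT M$, $(x^i,\dot x^k,t)\mapsto(x^i,\dot x^k)$, which is again linear. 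Both compatibility conditions for a double vector bundle morphism are therefore immediate.

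Granting this, $\beta=\cA\circ\zb_0^{-1}$ is a morphism of double vector bundles, and to produce diagram (\ref{Hs}) I would stack the cube expressing $\zb_0^{-1}$ (obtained by reversing (\ref{b2})) on top of the cube (\ref{cA}): the intermediate faces over $\sJ^1L^\ast$ and $\sA\Lt$ are shared, so the outer faces compose to the vertical and oblique legs shown, with the base maps reducing to the identity on $\sJ^1L^\ast$ and to the anchor $\sA\Lt\to\sT M$. If one prefers, commutativity can be confirmed outright by substituting the explicit formula for $\zb_0$ given above into the coordinate form of $\cA$.

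I do not anticipate a genuine obstacle; the only delicate point is the bookkeeping of the several linear structures on these iterated bundles, and in particular the observation that the kernel of the anchor is the (trivial, since $\Rt$ is abelian) adjoint line bundle of the $\Rt$-principal bundle, which occupies exactly the $t$-direction. Verifying that this kernel is annihilated without disturbing either fibration --- so that forgetting $t$ is genuinely the anchor and both linear structures survive --- is the one computation I would spell out in full.
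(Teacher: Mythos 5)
Your proposal is correct and follows essentially the same route as the paper: the paper declares the proposition ``straightforward'' precisely because it has just established that $\zb_0$ is a double vector bundle isomorphism (diagram (\ref{b2})) and that the anchor $\cA$ in the adapted coordinates simply forgets the coordinate $t$, so that $\zb=\cA\circ\zb_0^{-1}$ is a composite of double vector bundle morphisms and diagram (\ref{Hs}) results from stacking (\ref{b2}) with (\ref{cA}). Your additional remark that the kernel of $\cA$ is the trivial adjoint line bundle occupying the $t$-direction is a correct and slightly more explicit justification of the coordinate claim than the paper offers.
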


\medskip\noindent The above diagram represents exactly the Hamiltonian part of the (dynamical) contact Tulczyjew triple. Note that, in contrary to $\beta_0$, the map
$\beta$ is no longer a diffeomorphism (the dimension of $\sT\sJ^1 {L^\ast}$ is by 1 smaller than the dimension of $\sJ^1 L_{\sT^\ast L^\times}$) and acts, in a sense, in the direction opposite to that of $\beta_0$; however, it serves the same purposes as $\beta_M$ in the case of Hamiltonian mechanics. Using $\beta$, we can write the contact Hamiltonian vector field generated by a contact Hamiltonian, i.e., a section
$$\mathcal{h}:\sJ^1L^\ast\to L_{\sT^\ast L^\times}\simeq \sJ^1L^\ast\times_M L,$$
(often viewed as a map $\mathcal{h}:\sJ^1L^\ast\to L$ covering the identity on $M$) as
$$X_\mathcal{h}=\beta\circ\sj^1 \mathcal{h},$$
which looks very similar to the definition of the symplectic Hamiltonian vector field $X_H=\beta^{-1}_M\circ \xd H$ for $H:\sT^\ast M\rightarrow \R$. In coordinates,
$$
(\,x^i,\,p_j,\,z,\,\dot x^k,\,\dot p_l,\,\dot z\,)\circ\zb=(x^i,\, p_j,\,z,\, Z^k,\, -Z_l-Z_zp_l,\, p_rZ^r-Z).
$$

\medskip\noindent
The map $\beta_{L^\times}$ is an anti-symplectomorphism, as well as a principal bundle isomorphism, which means that it maps invariant Lagrangian submanifolds to invariant Lagrangian submanifolds. Since Legendrian submanifolds of contact manifolds are just images of invariant Lagrangian submanifolds in the corresponding symplectic $\Rt$-principal bundles under the principal bundle projections, we infer that $\beta_0$ maps Legendrian submanifolds to Legendrian submanifolds. The map $\beta$ can then serve as a tool for obtaining dynamics from more general Legendrian submanifolds than just the images of the first jet prolongations $\sj^1 \mathcal{h}$ of contact Hamiltonians $\mathcal{h}$.

\begin{example}\label{ex:1} Let us construct the map $\beta$ from the above proposition for the trivial line bundle $L=M\times\R$. For that, we have to examine all the spaces and maps from the diagram (\ref{Hs}), assuming the triviality of $L$.

Points of the trivial line bundle $L=M\times\R$ will be denoted by $(x,\zt)$, with $x\in M$ and $\zt\in\R$. We shall use the same notation for elements $(x,\zt)$ of $L^\times=M\times \R^\times$, with the additional condition $\zt\neq 0$. The line bundle $L^\ast$ can be identified with $M\times \R^\ast$ with elements $(x,z)$. We have then
$$\sT^\ast L^\times\simeq \sT^\ast M\times\R^\times\times\R^\ast,\qquad \sJ^1L_P\simeq \sT^\ast M\times\R^\ast,$$
and the projection reads
\begin{equation}\label{e:5}
\sT^\ast M\times\R^\times\times\R^\ast\ni(\pi,\zt,z)\longmapsto \left(\frac{\pi}{\zt}, z\right)\in \sT^\ast M\times\R^\ast.
\end{equation}
By definition, the Atiyah algebroid of the principal bundle $\sT^\ast L^\times\rightarrow \sJ^1L^\ast$ is $\sT\sT^\ast L^\times\slash \R^\times$. We identify $\sT\sT^\ast L^\times$ with
$\sT\sT^\ast M\times \R^\times \times \R^\ast\times \R\times\R^\ast$ with elements $(w,t,z,\dot t, \dot z)$. The lifted $\Rt$-action reads then
$$({\dt}{\dts}k)_s(w,\zt,z,\dot \zt, \dot z)=(s\odot w,s\zt,z,s\dot \zt,\dot z),$$
where $\odot$ stands for the multiplication by reals in the vector bundle $\sT\pi_M: \sT\sT^\ast M\rightarrow \sT M$. We have then the following identifications and  projections:
$$\sT\sT^\ast L^\times\simeq\sT\sT^\ast M\times \R^\times \times \R^\ast\times \R\times\R^\ast, \qquad  \sA\sT^\ast L^\times\simeq\sT\sT^\ast M\times\R^\ast\times \R\times\R^\ast,$$
$$(w,t,z,\dot t, \dot z)\longmapsto \left(\frac{1}{t}\odot w,\, z,\,\frac{\dot t}{t},\, \dot z\right).$$
The anchor map ${\cA}$ for $\sA\sT^\ast L^\times$ can be read from the tangent of the map given by equation (\ref{e:5}). The tangent map is
$$(w,\zt,z,\dot \zt, \dot z)\longmapsto \left(\frac{1}{\zt}\odot w\,\oplus\,\frac{-\pi\dot \zt}{\zt^2},\, z,\, \dot z\right),$$
where  $\pi=\tau_{\sT^\ast M}(w)$ and $\oplus$ stands for the operation of adding an element of the core (in this case $\sT^\ast M$) to the element of the double vector bundle $\sT\sT^\ast M$. The anchor then reads
$${\cA}:\sA\sT^\ast L^\times\simeq\sT\sT^\ast M\times\R^\ast\times \R\times\R^\ast\longrightarrow \sT\sT^\ast M\times\R^\ast\times \R^\ast\simeq \sT\sJ^1 L^\ast,$$
\begin{equation}\label{e:12}
(w,z,\tau,\dot z)\longmapsto (w\oplus (-\tau\pi), z,\dot z).
\end{equation}
The dual line bundle $L^\ast_{\sT^\ast L^\times}\simeq \sJ^1L^\ast\times_M L^\ast$ can be identified with $\sT^\ast M\times \R^\ast\times\R^\ast$, whose points we denote $(p,z,Z)$. Sections of this bundle are now functions on $\sT^\ast M\times \R^\ast$, i.e., $Z=h(p,z)$. As usual in the trivial case, the first jet of a function $f$ is composed of the differential of $f$ and the value of $f$. We have therefore
$$\sJ^1L^\ast_{\sT^\ast L^\times}\simeq \sT^\ast\sT^\ast M\times \R^\ast \times \R^\ast\times \R\ni(\varphi, z, Z, Z_z).$$
If $\mh$ is a function on $\sT^\ast M\times \R^\ast$, then
$$\sj^1 \mathcal{h}=\left(\xd_{\sT^*M}\mathcal{h},z,\mathcal{h},\frac{\partial \mathcal{h}}{\partial z}\right),$$
where ${\xd}_{\sT^*M}$ is the differential in the direction of $\sT^*M$.
The manifold $\sJ^1L^\ast_{\sT^\ast L^\times}$ is the base for the principal bundle $\sT^\ast\sT^\ast L^\times$, with $\R^\times$ acting by the phase lift of ${\dts}k$. In the case $L^\times=M\times\R^\times$, we have the identification
$$\sT^\ast\sT^\ast L^\times\simeq\sT^\ast\sT^\ast M\times \R^\times\times\R^\ast\times\R^\ast\times\R\ni(\varphi,t,z,a_t,a_z).$$
The canonical map $\beta_{L^\times}$ for the trivial $L^\times$ reads
\begin{align*}
&\beta_{L^\times}:\sT\sT^\ast L^\times\simeq \sT\sT^\ast M\times \R^\times \times \R^\ast\times \R\times\R^\ast\\
&\longrightarrow \sT^\ast\sT^\ast M\times \R^\times \times \R^\ast\times \R^\ast\times\R\simeq \sT^\ast\sT^\ast L^\times,\\
&(w,t,z,\dot t, \dot z)\longmapsto (\beta_M(w),t,z,-\dot z, \dot t), \ \text{i.e.,}\;\; a_t=-\dot z, \; a_z=\dot t\,.
\end{align*}
The last map we need is the projection from $\sT^\ast\sT^\ast L^\times$ to $\sJ^1L^\ast_{\sT^\ast L^\times}$,
\begin{align*}
&\sT^\ast\sT^\ast L^\times \simeq  \sT^\ast\sT^\ast M\times \R^\times \times \R^\ast\times \R^\ast\times\R\\
& \longrightarrow
 \sT^\ast\sT^\ast M\times \R^\ast \times \R^\ast\times \R\simeq\sJ^1L^\ast_{\sT^\ast L^\times},\\
&(\varphi,t,z,a_t, a_z)\longmapsto
\left(\frac{1}{t}\odot\varphi,\; z,\; \frac{1}{t}\langle p,\, v\rangle+a_t,\; \frac{a_z}{t}\;\right),
\end{align*}
where $p$ and $v$ are the projections of $\varphi$ on $\sT^\ast M$ and $\sT M$, respectively. It means that
$$Z=\frac{1}{t}\langle p,v\rangle+a_t\ \text{and}\ Z_z=\frac{a_z}{t}.$$
Using the identifications described above, we can write
$$
\xymatrix@C+30pt{
(w,t,z,\dot t,\dot z)\ar@{|->}[r]^{\beta_{L^\times}}\ar@{|->}[d] & (\beta_M(w),t,z,-\dot z,\dot t)\ar@{|->}[d] \\
\left(\frac{1}{t}\odot w, z,\frac{\dot t}{t}, \dot z\right)\ar@{|->}[r]^(0.4){\beta_0} & \left(\frac{1}{t}\odot \beta_M(w), z, \frac{1}{t}\langle p,\, v\rangle-\dot z, \frac{\dot t}{t}\right).
}
$$
From the above diagram we read $\beta^{-1}_0$
as
\begin{align*}
\sT^\ast\sT^\ast M\times \R^\ast \times \R^\ast\times \R\ni(\varphi, z, Z, Z_z)&\longmapsto (\beta_M^{-1}(\varphi),z, Z_z,  \langle p,\, v\rangle-Z)\\
&\in
\sT\sT^\ast M\times \R^\ast \times \R\times \R^\ast\,.
\end{align*}
Composing $\beta_0^{-1}$ with the anchor (\ref{e:12}), we get the map $\beta$,
\bea&\beta:\sJ^1L_{\sT^\ast L^\times}\simeq \sT^\ast\sT^\ast M\times \R^\times \times \R^\times\times \R\notag\\
&\qquad\longrightarrow \sT\sT^\ast M\times\R^\ast\times\R^\ast\simeq\sT\sJ^1L^\ast, \notag \\
&(\varphi, z, Z, Z_z)\longmapsto (\beta_M^{-1}(\varphi)\oplus (-Z_zp), z,\langle p,v\rangle-Z).\label{e:14}
\eea
\end{example}

Applying $\beta$ to $\xd\mathcal{h}(x^i,p_j,z)$, i.e., putting $Z=\mathcal{h}$, $Z_k=\frac{\partial \mathcal{h}}{\partial x^k}$, $Z^l=\frac{\partial \mathcal{h}}{\partial p_l}$, and $Z_z=\frac{\partial \mathcal{h}}{\partial z}$,
we get
\be\label{hd}(\beta\circ\sj^1\mathcal{h})(x^i,p_j,z)=\left(x^i,\, p_j,\,z,\, \frac{\partial \mathcal{h}}{\partial p_k},\, -\frac{\partial \mathcal{h}}{\partial x^l}-\frac{\partial \mathcal{h}}{\partial z}p_l,\, p_n\frac{\partial \mathcal{h}}{\partial p_n}-\mathcal{h}\right),\ee
which, written as a vector field, gives exactly (\ref{e:2}).

\section{The Lagrangian side of the contact Tulczyjew triples}\label{sec:lag}
Completely analogously to the previous section, we can obtain the following proposition, this time for the Lagrangian part of the contact Tulczyjew triple.
\begin{theorem}
The Tulczyjew double vector bundle isomorphism
$$\xymatrix@C-20pt@R-10pt{
& \sT^\ast\sT \Lt  \ar[dr] \ar[ddl]
 & & & \sT\sT^\ast \Lt\ar[dr]\ar[ddl]\ar[lll]_{\za_\Lt}&  \\
 & & \sT \Lt\ar[ddl]
 & & & \sT \Lt \ar[ddl]\ar[lll]\\
 \sT^\ast \Lt\ar[dr]
 & & & \sT^\ast \Lt\ar[dr]\ar[lll] & &  \\
 & \Lt& & & \Lt\ar[lll] &
}$$
induces canonically the double vector bundle isomorphism described by the following commutative diagram of surjective vector bundle morphisms.
$$\xymatrix@C-18pt@R-10pt{
& \sJ^1 L^\ast_{\sT L^\times}  \ar[dr] \ar[ddl]
 & & & \sA\sT^\ast L^\times\ar[dr]\ar[ddl]\ar[lll]_{\za_0}&  \\
 & & \sA \Lt\ar[ddl]
 & & & \sA \Lt \ar[ddl]\ar[lll]\\
 \sJ^1L^*\ar[dr]
 & & & \sJ^1L^*\ar[dr]\ar[lll] & &  \\
 & M& & & M\ar[lll] &
}$$
The map $\za_0$ is additionally a contactomorphism with respect to the canonical contact structures
on $\sJ^1 L^\ast_{\sT L^\times}$ and $\sA\sT^\ast L^\times$.
Moreover, the composition of maps $\za=\cA\circ\za_0^{-1}$, where $\cA$ is the anchor of the Lie algebroid $\sA\sT^\ast L^\times$ (see~(\ref{cA}))  defines a morphism of double vector bundles
$$\za:\sJ^1 L_{\sT L^\times}\rightarrow \sT\sJ^1 {L^\ast},$$
with the corresponding commutative diagram of vector bundle morphisms
\be\label{Ls}\xymatrix@C-18pt@R-10pt{
& \sJ^1 L^\ast_{\sT L^\times} \ar[rrr]^\za \ar[dr] \ar[ddl]
 & & & \sT\sJ^1L^*\ar[dr]\ar[ddl]&  \\
 & & \sA \Lt\ar[rrr]\ar[ddl]
 & & & \sT M \ar[ddl]\\
 \sJ^1L^*\ar[dr]\ar[rrr]
 & & & \sJ^1L^*\ar[dr] & &  \\
 & M\ar[rrr]& & & M &
}\ee

\end{theorem}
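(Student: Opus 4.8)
The plan is to follow the Hamiltonian side almost verbatim, replacing $\zb_{\Lt}$ by the Tulczyjew isomorphism $\za_{\Lt}\colon\sT\sT^\ast\Lt\to\sT^\ast\sT\Lt$, the dual of the canonical flip $\zk_{\Lt}$. First I would invoke Theorem \ref{t1} with $P=\Lt$, which already asserts that every arrow of the classical diagram (\ref{zb}) — in particular $\za_{\Lt}$ — is a surjective morphism of $\Rt$-principal bundles; indeed the proof of Proposition \ref{prop:3} explicitly records the intertwining $\za_{\Lt}\circ\dt\dts h_s=\dts\dt h_s\circ\za_{\Lt}$. It then remains to identify the two relevant base manifolds: $\sT\sT^\ast\Lt/\Rt=\sA\sT^\ast\Lt$ is the Atiyah algebroid exactly as on the Hamiltonian side, while $\sT^\ast\sT\Lt/\Rt=\sJ^1L^\ast_{\sT\Lt}$ follows from Proposition \ref{prop:4} applied to the $\Rt$-principal bundle $P=\sT\Lt$ carrying its tangent-lifted action $\dt h$. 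Passing $\za_{\Lt}$ to these quotients yields the induced double vector bundle isomorphism $\za_0\colon\sA\sT^\ast\Lt\to\sJ^1L^\ast_{\sT\Lt}$ of the second diagram.

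For the contactomorphism claim I would exhibit $\za_{\Lt}$ as an isomorphism of symplectic $\Rt$-principal bundles and then quote Proposition \ref{cm}. The classical Tulczyjew map is an (anti)symplectomorphism, $\za_{\Lt}^\ast\,\omega_{\sT\Lt}=k\,\dt\omega_{\Lt}$ for a constant $k\in\Rt$, where $\omega_{\sT\Lt}$ is the canonical symplectic form on $\sT^\ast\sT\Lt$ and $\dt\omega_{\Lt}$ is the tangent lift of $\omega_{\Lt}$. Both forms are $1$-homogeneous: $\dt\omega_{\Lt}$ by the tangent-lift theorem applied to the symplectic $\Rt$-principal bundle $\sT^\ast\Lt\to\sJ^1L^\ast$, and $\omega_{\sT\Lt}$ by the homogeneity of the canonical symplectic form on the cotangent bundle of an $\Rt$-principal bundle, established in the proof of Proposition \ref{prop:3} for $P=\sT\Lt$. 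Since $\za_{\Lt}$ simultaneously intertwines the lifted actions and rescales these homogeneous forms by the constant $k$, it is an isomorphism of symplectic $\Rt$-principal bundles, so Proposition \ref{cm} makes its base map $\za_0$ a contactomorphism between the principal-lift contact structure on $\sA\sT^\ast\Lt$ and the canonical jet contact structure on $\sJ^1L^\ast_{\sT\Lt}$.

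For the dynamical part I would mimic the construction of $\zb$ exactly: set $\za=\cA\circ\za_0^{-1}$, with $\cA\colon\sA\sT^\ast\Lt\to\sT\sJ^1L^\ast$ the anchor of diagram (\ref{cA}). As $\cA$ is a double vector bundle morphism and $\za_0$ a double vector bundle isomorphism, the composite $\za$ is a double vector bundle morphism, and commutativity of (\ref{Ls}) follows by chasing the three vertical projections through $\za_0^{-1}$ and then $\cA$. A short coordinate verification — reading $\za_0$ off the quotient projections and composing with the coordinate form of $\cA$, which merely forgets the fibre coordinate $t$ — pins down the explicit formula and confirms that $\za$ drops the dimension by one, precisely as $\zb$ does on the Hamiltonian side.

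The step needing most care is the bookkeeping of the two distinct lifts: on the source one lifts the symplectic structure of $\sT^\ast\Lt$ tangentially, producing the principal-lift contact structure on $\sA\sT^\ast\Lt$, whereas on the target one uses the canonical symplectic form on the cotangent bundle of $\sT\Lt$, producing the jet contact structure on $\sJ^1L^\ast_{\sT\Lt}$. The crux is to check that these are exactly the two $1$-homogeneous symplectic structures related by the classical $\za_{\Lt}$, with matching homogeneity degree, so that the single hypothesis of Proposition \ref{cm} is met; once that is secured, the double-vector-bundle and principal-bundle compatibilities descend automatically and everything else is routine.
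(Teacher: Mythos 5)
Your proposal is correct and takes essentially the same route as the paper: the paper obtains this theorem ``completely analogously'' to its Hamiltonian counterpart, namely by applying Theorem \ref{t1} with $P=\Lt$ to reduce the classical Tulczyjew diagram by the lifted $\Rt$-actions, identifying the quotient bases ($\sA\sT^\ast\Lt$ and, via Proposition \ref{prop:4} applied to $\sT\Lt$, the jet bundle $\sJ^1L^\ast_{\sT\Lt}$), citing the canonical contact structures of Section \ref{ss1} together with Proposition \ref{cm} for the contactomorphism claim, and composing with the anchor for the dynamical map $\za$. Your explicit check that $\za_{\Lt}$ intertwines the two $1$-homogeneous symplectic forms $\dt\omega_{\Lt}$ and $\omega_{\sT\Lt}$, so that the hypothesis of Proposition \ref{cm} is met, is precisely the detail the paper leaves implicit.
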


\medskip\noindent The last diagram represents exactly the Lagrangian part of the (dynamical) contact Tulczyjew triple. Note that, in contrary to $\za_0$, the map
$\za$ is no longer a diffeomorphism and acts, in a sense, in the direction opposite to that of $\za_0$; however, it serves the same purposes as $\za_M$ in the case of Lagrangian mechanics.
To show how these maps look in coordinates, let us use the following ones:
\begin{itemize}
\item adapted local coordinates $(x^i,\tau, \dot x^j,\dot \tau)$ in $\sT L^\times$;
\item adapted local coordinates $(x^i,\tau, \dot x^j,\dot \tau, V_k, V_\tau, \dot V^j, \dot V_\tau)$ in $\sT^\ast\sT L^\times$;
\item local coordinates $(x^i,\dot x^j, t,\mu)$ in $L^\ast_{\sT L^\times}$;
\item adapted local coordinates $(x^i,\dot x^j, t,\mu_k, \dot\mu_l,\mu_t,\mu)$ in $\sJ^1L^\ast_{\sT L^\times}$;
\end{itemize}
if we use on $\sT\Lt$ the coordinates $(x^i,\zt,\dot x^j,t=\dot\zt/\zt)$ instead of $(x^i,\zt,\dot x^j,\dot\zt)$, then the adapted coordinates on $\sT^\ast\sT L^\times$ are
$$\left(x^i,\tau, \dot x^j,t, V_k, V_\tau-t\dot V_\tau, \dot V^j, t\dot V_\tau\right).$$ The projection $pr$ from  $\sT^*\sT\Lt$ onto $\sJ^1L^*_{\sT\Lt}$ is given by
$$
\left(x^i,\dot x^j,t,\zm_k,\dot\zm_l,\zm_t,\zm\right)\circ pr=\left(x^i,\dot x^j,t,\frac{V_k}{\zt},\frac{\dot V_l}{\zt},z,V_\tau-t\dot V_\tau\right).
$$
Starting form $\alpha_{L^\times}$ which reads
$$
\left(x^i,\tau, \dot x^j,\dot \tau, V_k, V_\tau, \dot V^l, \dot V_\tau\right)\circ\alpha_{L^\times}=\left(x^i, \tau,\dot x^j,\dot \tau, \dot \pi_j, \dot z, \pi_l, z\right),
$$
we get the isomorphism $\za_0:\sA\sT^\ast L^\times\to\sJ^1 L^\ast_{\sT L^\times}$ in the form
$$
\left(x^i,\dot x^j,t,\zm_k,\dot\zm_l,\zm_t,\zm\right)\circ\za_0=
\left(x^i,\dot x^j,t,\dot p_k+tp_k,p_l,z,\dot z-tz\right).
$$
Consequently, the map $\za:\sJ^1 L^\ast_{\sT L^\times}\to\sT\sJ^1L^*$ reads
$$
(\,x^i,\,p_j,\,z,\,\dot x^k,\,\dot p_l,\,\dot z\,)\circ\za=(x^i,\, \dot\zm_j,\,\zm_t,\, \dot x^k,\, \zm_l-t\dot\zm_l,\, \zm+t\zm_t).
$$
The double vector bundle structure on $\sJ^1 L^\ast_{\sT L^\times}$ can be presented as follows.
$$
\xymatrix@C-10pt{
 & \sJ^1L^\ast_{\sT L^\times}\ar[dl]\ar[dr] & \\
\sJ^1L^\ast\ar[dr] &  \sJ^1L^\ast\ar@{ (->}[u]\ar[d] & \sA L^\times\ar[dl] \\
 & M &
}\qquad
\xymatrix@C-20pt@R-10pt{
 & (x^i,\dot x^j,t,\zm_k,\dot\zm_l,\zm_t,\zm)\ar[dl]\ar[dr] & \\
(x^i,\dot\zm_l,\zm_\tau)\ar[dr] & (x^i,\zm_k,\zm)\ar@{ (->}[u]\ar[d] & (x^i,\dot x^j,t)\ar[dl] \\
 & (x^i) &
}
$$
Consequently, the isomorphism $R:\sJ^1L^*_{\sT\Lt}\to\sJ^1L^*_{\sT^*\Lt}$, an analog of $R_{\sT\Lt}$~(\ref{ci}), takes the form
$$
\left(x^i,p_j,z,Z_k,Z^l,Z_z,Z\right)\circ R=\left(x^i,\dot\zm_j,\zm_t,-\zm_k,\dot x^l,t,
\dot\zm_r\dot x^r+t\zm_t-\zm\right).
$$

\begin{example} Following Example \ref{ex:1}, we construct the map $\alpha$ for the trivial line bundle $L=M\times\R$. For that, we have to examine all the spaces and maps from the diagram (\ref{Ls}), assuming the triviality of $L$.
The Atiyah algebroid $\sA(M\times\R^\times)$ can be identified with $\sT M\times\R$ with points $(v,\tau)$, and the projection $\sT L^\times\rightarrow \sA L^\times$ reads
$$
\sT M\times \R^\times\times\R\ni(v,\zt,\dot\zt)\longmapsto \left(v,t={\dot\zt}/{\zt}\right)\in\sT M\times \R.
$$
The above projection is, in fact, the projection in the principal bundle with the $\R^\times$-action {${\dt} k$}. 1-homogeneous functions on $\sA L^\times$ are identified with sections of the dual line bundle
$$L^\ast_{\sT L^\times}\simeq \sT M\times\R\times\R^\ast$$
with elements denoted by $(v,t,\mu)$. The first jet bundle will then be
$$\sJ^1 L^\ast_{\sT L^\times}\simeq\sT^\ast\sT M\times\R\times\R^\ast\times\R^\ast\ni(\psi,t,\mu,\mu_t).$$
Next, we need the projection from $\sT^\ast \sT L^\times$ to $\sJ^1 L^\ast_{\sT L^\times}$. For that, we identify $\sT^\ast \sT L^\times$ with $\sT^\ast\sT M\times \R^\times \times\R\times \R^\ast\times\R^\ast$. The projection reads
\begin{align*}
&\sT^\ast\sT M\times \R^\times \times\R\times \R^\ast\times\R^\ast\ni (\varphi,\,\zt,\,\dot\zt,\, c_\zt,\, c_{\dot\zt})\\
&\qquad\longmapsto \left(\frac{\varphi}{\zt},\, t,\, c_\zt+tc_{\dot\zt},\, c_{\dot \zt}\right)\in \sT^\ast\sT M\times \R\times \R^\ast\times\R^\ast,
\end{align*}
which means in particular that $\mu=c_\zt+tc_{\dot\zt}$ and $\mu_t=c_{\dot\zt}$.
Finally, we are ready to write the map $\alpha:\sJ^1L^\ast_{\sT L^\times}\rightarrow \sT\sJ^1 L^\ast$ for the trivial bundle ${L\!=M\times\R}$:
\beas
\alpha:\sJ^1L^\ast_{\sT L^\times}&\simeq& \sT^\ast\sT M\times \R\times \R^\ast\times\R^\ast\longrightarrow  \sT\sT^\ast M\times \R^\ast\times \R^\ast\simeq\sT\sJ^1 L^\ast \\
&&(\psi,\,t,\,\mu,\,\mu_t)\longmapsto \left(\alpha_M^{-1}(\psi)\oplus(-t p),\, \mu_t,\, \mu-t\mu_t\right).
\eeas
In the above formula $p=\tau_{\sT^\ast M}(\alpha_M^{-1}(\varphi))\in\sT^\ast M$.

\medskip\noindent In the contact Lagrangian mechanics, a Lagrangian is a section of the line bundle $L^\ast_{\sT L^\times}$. In our case this bundle is trivial,
$$L^\ast_{\sT L^\ast}\simeq\sT M\times\R\times\R^\ast\rightarrow\sT M\times\R.$$
Every section of this bundle can then be identified with a function
$$\sT M\times\R\ni(v,t) \mapsto\ell(v,t)\in\R^\ast.$$
The first jet of the section is composed of the value of $\ell$ and its differential. In the identification
$$\sJ^1 L^\ast_{\sT L^\times}\simeq\sT^\ast\sT M\times \R\times \R^\ast\times\R^\ast,$$
the first jet reads
$$\sj^1\ell(v,t)=\left({\xd}_{\sT M}\ell(v,t),\, t, \ell(v,t), \frac{\partial \ell}{\partial t}\right),$$
where $\xd_{\sT M}\ell$ is the differential of $\ell$ in the direction of $\sT M$. If we use the adapted coordinates $(x^i,p_j,\dot x^k,\dot p_l)$ in $\sT\sT^\ast M$, then the image of $\sj^1\ell$ by $\alpha$ in the identification
$$\sT\sJ^1L^\ast\simeq \sT\sT^\ast M\times\R^\ast\times\R^\ast$$
is determined by the equations
\be\label{ld}
p_i=\frac{\partial\ell}{\partial \dot x^i},\qquad
\dot{p_j}=\frac{\partial\ell}{\partial x^j}-t\frac{\partial\ell}{\partial \dot x^j},\qquad
z=\frac{\partial\ell}{\partial t},\qquad
\dot z=\ell-t\frac{\partial\ell}{\partial t}.
\ee
From these equations, we can deduce the second-order differential equations for a curve in $M\times \R$ in the form
$$
\frac{d}{ds}\left(\frac{\partial\ell}{\partial\dot x}\right)=\frac{\partial\ell}{\partial x^j}-t\frac{\partial\ell}{\partial \dot x^j},\qquad
\frac{d}{ds}\left(\frac{\partial\ell}{\partial t}\right)=\ell-t\frac{\partial\ell}{\partial t},
$$
with $s$ being the parameter of the curve.
They can be viewed as \emph{contact Euler-Lagrange equations}.
Note that with respect to $t$ the above equations are of the first order.
\end{example}

\section{The contact Legendre map}
In our approach, elements of $\sJ^1L^\ast_{\sT L^\times}$ are first jets of Hamiltonians, which are maps
$$
\xymatrix{
\sJ^1L^\ast\ar[r]^{\mh}\ar[d] & L^\ast\ar[d] \\
M\ar[r]^{\id_M} & M
}$$
over the identity on $M$. In the adapted coordinates, $\mh$ is represented by a function  $\mh=\mh(x^i,p_j,z)$ on $L^\ast_{\sT^\ast L^\times}$. The first jet of $\mh$ is just the tangent map at a fixed point, i.e., $\sj^1\mh(u)$, for a fixed $u=\sj^1\sigma(x)\in \sJ^1L^\ast$, is a map
$$
\xymatrix{
\sT_{u}\sJ^1L^\ast\ar[r]\ar[d] & \sT_{\mh(u)} L^\ast\ar[d] \\
\sT_xM\ar[r]^{\id} & \sT_xM
}$$
over the identity on $\sT_xM$. Restricting it to vectors vertical with respect to the projection onto $M$, we get a map $\sV_{u}\sJ^1L^\ast\rightarrow\sV_{\mh(u)} L^\ast$. The both bundles
$\sJ^1L^\ast\rightarrow M$ and $L^\ast\longrightarrow M$ are vector bundles, therefore we can use the fact that vertical vectors can be identified with elements of the fiber. We get a map
$$\sJ^1L^\ast\times_M\sJ^1L^\ast\rightarrow L^\ast\times_M L^\ast, \qquad (u, \delta u)\longmapsto (\mh(u), \xd^\sv \mh(u,\delta u)).$$
For fixed $u$, the second part of the above map, namely $\sJ^1 L^\ast\ni\delta u\rightarrow \xd^\sv \mh(u,\delta u)\\\in L^\ast$, defines an element $\xd^\sv \mh(u)\in\sJ^1L\ot L^*\simeq\sA L^\times$,
according to (\ref{iso}). This gives a contact version of the Legendre map on the Hamiltonian side,
$$\zl_\mh:\sJ^1L^*\to\sA\Lt,\quad \zP(\zl_\mh(u),\zd u)=\xd^\sv \mh(u,\zd u),$$
where $\zP$ is the canonical pairing (\ref{pairing}).
This is a smooth map covering the identity on $M$. We call the Hamiltonian $\mh$ \emph{hyperregular} if $\zl_\mh$ is actually a diffeomorphism.

\medskip\noindent Since $L^\ast_{\sT L^\times}\simeq \sA L^\times\times_M L^\ast$, Lagrangians are sections of $L^\ast_{\sT L^\times}$, which are maps
$$
\xymatrix{
\sA L^\times\ar[r]^\ell\ar[d] & L^\ast\ar[d] \\
M\ar[r]^{\id_M} & M
}$$
covering the identity on $M$. We fix $v\in\sA L^\times$ and consider $\sj^1\ell(v)$ as a map
$$
\xymatrix{
\sT_{v}\sA L^\times \ar[r]\ar[d] & \sT_{\ell(v)} L^\ast\ar[d] \\
\sT_xM\ar[r]^{\id} & \sT_xM
}$$
over the identity on $\sT_xM$. Restricting the above map to vertical vectors and using again the description of the vertical tangent bundle for a vector bundle, we get a map
$$\sA L^\times\times_M\sA L^\times\longrightarrow L^\ast\times_M L^\ast, \qquad (v, \delta v)\longmapsto (\ell(v), \xd^\sv \ell(v,\delta v)).$$
For a fixed $v$, the second part of the above map determines an element of $\sA^*\Lt\ot L^*\simeq\sJ^1L^\ast$ (cf. (\ref{iso})). This defines the contact version of the Legendre map:
$$\lambda_\ell:\sA L^\times\longrightarrow \sJ^1 L^\ast,\quad \zP(\zd v,\lambda_\ell(v))=\xd^\sv \ell(v,\delta v). $$
This is again a smooth map covering the identity on $M$. We will call $\ell$ \emph{hyperregular} if $\zl_\ell$ is actually a diffeomorphism.
\begin{example}
In local coordinates $(x^i,\dot x^j,t)$ on $\sA\Lt$ and $(x^i,p_j,z)$ on $\sJ^1L^*$, a Lagrangian
$\ell$ is represented by a function $\ell(x^i,\dot x^j,t)$. The pairing $\zP$ (\ref{pairing1}) with values in $L^*$ is represented by the function
$$\zP(x^i,\dot x^j,t,p_k,z)=\dot x^jp_j+tz.$$
Since
$$\xd^\sv\ell\left((x^i,\dot x^j,t),(x^j,(\dot x^j)',t')\right)=\frac{\pa\ell}{\pa \dot x^j}(x^i,\dot x^j,t)\cdot (\dot x^j)'
+\frac{\pa\ell}{\pa t}(x^i,\dot x^j,t)\cdot t',$$
the Legendre map $\lambda_\ell:\sA L^\times\longrightarrow \sJ^1 L^\ast$ reads
$$\zl_\ell(x^i,\dot x^j,t)=\left(x^i,\frac{\pa\ell}{\pa \dot x^j}(x^i,\dot x^j,t),\frac{\pa\ell}{\pa t}(x^i,\dot x^j,t)\right).$$
Similarly, we obtain that for a Hamiltonian $\mh$ the Legendre map $\zl_\mh:\sJ^1L^*\!\!\to\sA\Lt$ reads
$$\zl_\mh(x^i,p_j,z)=\left(x^i,\frac{\pa\mh}{\pa{p_j}}(x^i,p_j,z),\frac{\pa\mh}{\pa z}(x^i, p_j,z)\right).$$
\end{example}
\begin{proposition}\label{pr1}
If $\ell:\sJ^1L^*\to L^*$ is a hyperregular Lagrangian, then
$$\mh(u)=\zP\left(u,\zl^{-1}_\ell(u)\right)-(\ell\circ\zl^{-1}_\ell)(u)$$
is a hyperregular Hamiltonian and $\zl_\mh=\zl^{-1}_\ell$.
\end{proposition}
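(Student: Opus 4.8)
The plan is to read off $\zl_\mh$ directly from the defining formula for $\mh$ by computing its vertical derivative $\xd^\sv\mh$, and then to identify the result using the defining property of the Lagrangian Legendre map $\zl_\ell$ together with the non-degeneracy of the pairing $\zP$ from (\ref{pairing}). Throughout, write $w(u)=\zl^{-1}_\ell(u)\in\sA\Lt$, so that by definition
\begin{equation*}
\mh(u)=\zP\bigl(w(u),u\bigr)-\ell\bigl(w(u)\bigr),
\end{equation*}
where both $u\in\sJ^1L^*$ and $w(u)\in\sA\Lt$ lie over the same point $x\in M$ (here $\zP(w,u)$ denotes the same element of $L^*$ as the paper's $\zP(u,\zl^{-1}_\ell(u))$, the pairing being insensitive to the order in which its two arguments are listed; $\ell$ is as before a section of $L^*_{\sT\Lt}$, i.e.\ a map $\sA\Lt\to L^*$ over $\id_M$). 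Since $\zl_\ell$ covers the identity on $M$, for a vertical curve $u_s=u+s\,\zd u$ in the fibre of $\sJ^1L^*\to M$ over $x$ the curve $w(u_s)=\zl^{-1}_\ell(u_s)$ stays in the fibre of $\sA\Lt\to M$ over $x$, so its derivative $\dot w:=\tfrac{d}{ds}\big|_{0}w(u_s)$ is a \emph{vertical} vector in $\sA\Lt$.

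First I would differentiate along $u_s$. Using that $\zP$ is bilinear over $M$, the Leibniz rule, and the fact that the derivative of $\ell$ along the vertical curve $w(u_s)$ is by definition $\xd^\sv\ell$, I obtain
\begin{equation*}
\xd^\sv\mh(u,\zd u)=\zP(\dot w,u)+\zP\bigl(w(u),\zd u\bigr)-\xd^\sv\ell\bigl(w(u),\dot w\bigr).
\end{equation*}
The key step is the cancellation of the first and third terms. By the defining property of the Lagrangian Legendre map, $\xd^\sv\ell(w,\zd w)=\zP(\zd w,\zl_\ell(w))$; applying this with $w=w(u)$ and $\zd w=\dot w$, and using $\zl_\ell(w(u))=\zl_\ell(\zl^{-1}_\ell(u))=u$, gives $\xd^\sv\ell(w(u),\dot w)=\zP(\dot w,u)$. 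Hence the two terms involving $\dot w$ cancel and
\begin{equation*}
\xd^\sv\mh(u,\zd u)=\zP\bigl(w(u),\zd u\bigr)\qquad\text{for every vertical }\zd u.
\end{equation*}

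Comparing this with the definition $\zP(\zl_\mh(u),\zd u)=\xd^\sv\mh(u,\zd u)$ and invoking the non-degeneracy of $\zP$ in its $\sA\Lt$-argument, I conclude $\zl_\mh(u)=w(u)=\zl^{-1}_\ell(u)$, that is, $\zl_\mh=\zl^{-1}_\ell$. Since $\ell$ is hyperregular, $\zl_\ell$ is a diffeomorphism, so $\zl_\mh=\zl^{-1}_\ell$ is a diffeomorphism as well, and therefore $\mh$ is hyperregular.

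I expect the only real subtlety to be bookkeeping rather than a genuine obstacle: one must check that $\dot w$ is genuinely vertical for $\sA\Lt\to M$ (so that the definition of $\zl_\ell$, which constrains only vertical directions, may be applied) and that differentiating $\zP\bigl(w(u_s),u_s\bigr)$ separates cleanly into the two displayed terms. Both follow at once from the fact that $\zl_\ell$ covers $\id_M$, together with the bilinearity of $\zP$ over the base. As a sanity check, the same computation in the coordinates of the preceding Example is a one-line application of the chain rule: with $\mh=\dot x^jp_j+tz-\ell$ and $(\dot x^j,t)$ solved from $p_j=\partial\ell/\partial\dot x^j$, $z=\partial\ell/\partial t$, the cancellations give $\partial\mh/\partial p_k=\dot x^k$ and $\partial\mh/\partial z=t$, which are exactly $\zl_\mh=\zl^{-1}_\ell$.
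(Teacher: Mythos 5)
Your proof is correct, and it is more self-contained than the paper's: the paper disposes of this proposition in a single sentence, observing that in local coordinates $\zl_\ell$ is just the vertical derivative and $\zP$ looks like the standard scalar product (see (\ref{Pi})), so that the computation is ``formally the same as in standard mechanics,'' and the calculation is skipped. You instead carry the argument out intrinsically: differentiating $\mh(u)=\zP\bigl(\zl_\ell^{-1}(u),u\bigr)-\ell\bigl(\zl_\ell^{-1}(u)\bigr)$ along a vertical curve, using fibrewise bilinearity of $\zP$ (Leibniz rule), then the defining identity $\xd^\sv\ell(w,\zd w)=\zP(\zd w,\zl_\ell(w))$ together with $\zl_\ell(\zl_\ell^{-1}(u))=u$ to cancel the two terms containing $\dot w$, and finally nondegeneracy of $\zP$ to conclude $\zl_\mh(u)=\zl_\ell^{-1}(u)$. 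This is the same cancellation that drives the classical coordinate computation, but packaged invariantly, which buys two things: it requires no choice of local trivialization of $L$ (so there is nothing to patch together globally), and it makes explicit exactly which structural facts are used, namely bilinearity and nondegeneracy of $\zP$ and the fact that $\zl_\ell$ covers $\id_M$ (which is what makes $\dot w$ vertical, so that the defining property of $\zl_\ell$ may be invoked). The paper's appeal to coordinates is shorter but leaves those checks implicit. Two further points in your favour: you correctly read the hypothesis as $\ell:\sA\Lt\to L^*$, i.e.\ a section of $L^*_{\sT\Lt}$, silently repairing what is evidently a typo in the statement (the displayed type $\ell:\sJ^1L^*\to L^*$ is that of a Hamiltonian), and you note that $\zP\bigl(u,\zl_\ell^{-1}(u)\bigr)$ in the statement lists its arguments in the order opposite to the convention of (\ref{pairing}); both of your readings are the ones consistent with the rest of the paper.
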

\begin{proof}
Since in local coordinates $\zl_\ell$ is just the vertical derivative and $\zP:\sA\Lt \ti_M\sJ^1L^*\to L^*$ looks like the standard scalar product (see (\ref{Pi})), the proof
in coordinates is formally the same as in the standard mechanics, so we skip the calculations.
\end{proof}
\begin{corollary}\label{c1}
If $\mh:\sA\Lt\to L^*$ is a hyperregular Hamiltonian, then
$$\ell(v)=\zP\left(\zl^{-1}_\mh(v),v\right)+(\mh\circ\zl^{-1}_\mh)(u)$$
is a hyperregular Lagrangian.
\end{corollary}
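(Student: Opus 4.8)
The plan is to read Corollary \ref{c1} as the mirror image of Proposition \ref{pr1} under the involution interchanging the two wings of the contact Tulczyjew triple. The Hamiltonian Legendre map $\zl_\mh:\sJ^1L^*\to\sA\Lt$ and the Lagrangian Legendre map $\zl_\ell:\sA\Lt\to\sJ^1L^*$ are defined by one and the same prescription --- the fibrewise vertical derivative, contracted through the canonical pairing $\zP$ of (\ref{pairing}) --- with the roles of the bundles $\sA\Lt$ and $\sJ^1L^*$ merely exchanged. Because $\zP:\sA\Lt\ti_M\sJ^1L^*\to L^*$ is a perfect $L^*$-valued pairing, identifying each of $\sA\Lt$ and $\sJ^1L^*$ with the twisted dual of the other through the isomorphisms $\Psi_\sA$ and $\Psi_\sJ$ of (\ref{iso}), every assertion about one wing transcribes verbatim into an assertion about the other. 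Thus the argument that proves Proposition \ref{pr1} applies after exchanging $\ell\leftrightarrow\mh$ and $\sA\Lt\leftrightarrow\sJ^1L^*$.

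Carrying this out, I would first verify that $\ell$ is a well-defined smooth section of $L^*_{\sT\Lt}$ over $\sA\Lt$. This uses hyperregularity of $\mh$ alone: by hypothesis $\zl_\mh$ is a diffeomorphism, so $\zl^{-1}_\mh:\sA\Lt\to\sJ^1L^*$ is smooth and covers the identity on $M$; hence both $v\mapsto\zP(\zl^{-1}_\mh(v),v)$ and $v\mapsto\mh(\zl^{-1}_\mh(v))$ are smooth sections of $L^*$, and $\ell$ is their difference.

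The crux is the identity $\zl_\ell=\zl^{-1}_\mh$, for once it holds, hyperregularity of $\ell$ is immediate, the inverse of a diffeomorphism being a diffeomorphism. To establish it I would pass to the adapted coordinates of the example preceding Proposition \ref{pr1}, where $\zl_\mh$ is the vertical derivative $(x^i,p_j,z)\mapsto(x^i,\partial\mh/\partial p_j,\partial\mh/\partial z)$ and $\zP$ is the pairing $\dot x^jp_j+tz$. There the defining formula becomes the classical Legendre transform $\ell=\dot x^jp_j+tz-\mh$, with $(p_j,z)$ viewed as functions of $(\dot x^j,t)$ via $\zl^{-1}_\mh$; differentiating in $\dot x^j$ and $t$ and using the envelope cancellation furnished by $\dot x^j=\partial\mh/\partial p_j$, $t=\partial\mh/\partial z$ gives $\partial\ell/\partial\dot x^j=p_j$ and $\partial\ell/\partial t=z$, that is $\zl_\ell=\zl^{-1}_\mh$. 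As in Proposition \ref{pr1}, this computation is formally identical to its counterpart in ordinary mechanics, so I would not spell it out.

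The one place to exercise care --- rather than simply quote the local calculation --- is that $\ell$, $\mh$ and both Legendre maps are global objects covering the identity on $M$ and valued in the line bundle $L^*$, not in $\R$. Hyperregularity is the coordinate-free statement that a base-preserving bundle morphism is a global diffeomorphism, and $\zP$ is canonical; consequently the local equality $\zl_\ell=\zl^{-1}_\mh$ patches to a global one and the conclusion is independent of any trivialization of $L$. Equivalently, once $\zl_\ell=\zl^{-1}_\mh$ is known one may feed $\ell$ back into Proposition \ref{pr1}, recovering $\mh$ and thereby exhibiting the contact Legendre transformation as an involution on hyperregular generating objects.
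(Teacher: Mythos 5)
Your proposal is correct and follows essentially the same route as the paper: the corollary is obtained by symmetry from Proposition \ref{pr1}, swapping the roles of $\ell$ and $\mh$ (equivalently of $\sA\Lt$ and $\sJ^1L^*$), with the key identity $\zl_\ell=\zl^{-1}_\mh$ coming from the coordinate ``envelope'' computation that the paper likewise declares formally identical to the classical Legendre transform and skips. Your added remarks on smoothness of $\zl^{-1}_\mh$ and on the coordinate-free, global nature of hyperregularity are consistent with the paper's framework and fill in exactly the details the authors omit.
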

\enlargethispage{3em}
\begin{example}
Let us suppose that the line bundle $L$ is trivializable so that we can represent $L$ as $M\ti\R$. Consequently, we can use identifications $L^*=M\ti\R^*$, $\sJ^1L^*=\sT^*M\ti\R^*$, and
$\sA\Lt=\sT M\ti\R$. The pairing $\zP$ can be identified with the canonical pairing (real-valued) between $\sJ^1L^*=\sT^*M\ti\R^*$, $\sA\Lt=\sT M\ti\R$, and Lagrangians $\ell$ with functions on $\sT M\ti\R$. Hence, for vectors $v_x,v'_x\in\sT_xM$, we have
$$\xd^\sv\ell\left((v_x,t)(v'_x,t')\right)=\la\xd^\sv_{\sT M}\ell(v_x,t),v'_x\ran+
\frac{\pa\ell}{\pa t}(v_x,t)\cdot t',$$
where $\xd^\sv\ell(v_x,t)$ is the vertical derivative of the function $\ell(\cdot,t)$ on $\sT M$.
The corresponding Legendre map is
$$\zl_\ell(v_x,t)=\left(\xd^\sv_{\sT M}\ell(v_x,t),\frac{\pa\ell}{\pa t}(v_x,t)\right).$$
Let us choose a Riemannian metric $g$ on $M$, and consider the Lagrangian
$$\ell(v_x,t)=\frac{1}{2}\left(\Vert v_x\Vert^2+t^2\right),$$
where $\Vert\cdot\Vert$ is the norm corresponding to $g$. Then
$$\zl_\ell(v_x,t)=\left(g(v_x,\cdot),t\right),$$
which is clearly an isomorphism of the vector bundle $\sT M\ti\R$ onto $\sT^*M\ti\R^*$, so $\ell$ is hyperregular.
\end{example}
\begin{example}\label{regL}
We will construct an example of a hyperregular Lagrangian $\ell:\sJ^1B\to B^*$ for the M\"obius band $B\to S^1$ which is a non-trivializable line bundle.
\enlargethispage{3em}
Similarly as in Example \ref{AB}, we will use two charts $\cO_1$ and $\cO_2$  on $B$ with coordinates $(x,\zt)\in]0,\pi[\ti\R$ and $(x',\zt')\in]\frac{\zp}{2},\frac{3\zp}{2}[\ti\R$, respectively.

The intersection $\cO_{12}=\cO_1\cap\cO_2$ consists of two disjoint sets $\cO_{12}=\cO_{12}^1\cup\cO_{12}^2$ represented by $]\frac{\zp}{2},\pi[\ti\R$ in $\cO_1$ and $\cO_2$, and $\cO_{12}^2$ represented by $]0,\frac{\zp}{2}[\ti\R$ in $\cO_1$ and with $]\pi,\frac{3\zp}{2}[$ in $\cO_2$, with the transition functions
$(x',\zt')=(x,\zt)$ on $\cO_{12}^1$, and $(x',\zt')=(x+\pi,-\zt)$ on $\cO_{12}^2$.
The dual coordinates $(x,z)$ in the dual line bundle $B^*$ transform formally in the same way, if we replace $\zt$ and $\zt'$ with $z$ and $z'$, respectively.

The adapted coordinates $(x,\dot x,y=\dot\zt/\zt)$ and $(x',\dot x',y')$ in $\sA B^\ti$ are defined on the corresponding open subsets $\cU_1$ and $\cU_2$ of $\sA B^\ti$, having the intersection $\cU_{12}$ composed with two disjoint parts, $\cU_{12}=\cU_{12}^1\cup\cU_{12}^2$. The chart $\cU_{12}^1$ is represented by $]\frac{\zp}{2},\pi[\ti\R\ti\R$ in $\cU_1$ and $\cU_2$, and $\cU_{12}^2$ represented by $]0,\frac{\zp}{2}[\ti\R\ti\R$ in $\cU_1$ and with $]\pi,\frac{3\zp}{2}[\ti\R\ti\R$ in $\cU_2$, with the trivial transition function on $\cU_{12}^1$, and the transition $(x',\dot x',t')=(x+\pi,\dot x,t)$ on $\cU_{12}^2$.

Completely analogously, the transition functions for the corresponding charts $\cV_1$ and $\cV_2$ in $\sJ^1B^*$ are: the identity $(x',p',z')=(x,p,z)$ on $\cV_{12}^1$
and
\begin{align*}
]0,3\pi/2[\ti\R\ti\R&\ni(x,p,z)\mapsto(x'=x+\pi,p'=-p,z'=-z)\\
&\in ]\pi,{3\zp}/{2}[\ti\R\ti\R
\end{align*}
on $\cV_{12}^2$.
A Lagrangian $\ell:\sA B^\ti\to B^*$ is represented by a function $z\!=\!F_1(x,\dot x,t)$ on $\cU_1$
and a function $z'=F_2(x',\dot x',t)$ on $\cU_2$. These two functions define consistently a Lagrangian if and only if $F_1=F_2$ on $]\frac{\zp}{2},\pi[\ti\R\ti\R$ and
\be\label{F}F_2(x+\pi,\dot x,t)=-F_1(x,\dot x,t)\ee
for $x\in]0,\frac{\zp}{2}[$.
Then the Legendre map $\zl_\ell$ is consistently represented by a map $\cU_1\to\cV_1$,
$$(x,p,z)=\left(x,\frac{\pa F_1}{\pa\dot x},\frac{\pa F_1}{\pa t}\right),$$
and a map $\cU_2\to\cV_2$,
$$(x',p',z')=\left(x',\frac{\pa F_2}{\pa\dot x'},\frac{\pa F_2}{\pa t'}\right).$$
Let us consider
$$F_1(x,\dot x,t)=\frac{\cos(x)}{2}\left(\dot x^2-t^2\right)+\sin(x)t\dot x$$
and $F_2$ which is formally the same, but with a different domain,
$$F_2(x',\dot x',t')=\frac{\cos(x')}{2}\left((\dot x')^2-(t')^2\right)+\sin(x')t'\dot x'.$$
These functions clearly coincide for $x{=x'}\in]\frac{\zp}{2},\zp[$. For $x\in]0,\frac{\zp}{2}[$ we have
$$F_2(x+\pi,\dot x,t)=\frac{\cos(x+\pi)}{2}\left(\dot x^2-t^2\right)+\sin(x+\pi)t\dot x=-F_1(x,\dot x,t).$$
The compatibility condition is satisfied, so these two functions define uniquely a Lagrangian $\ell$ for $B$. The hyperregularity of $\ell$ is now equivalent to the hyperregularity of these two functions. They are given formally by the same formula, so we need to check the hyperregularity of the function $F$ defined formally as $F_1$ but on the domain $\R^3$. We have
\begin{align}
\label{hr}\zl_F(x,\dot x,t)&=\left(x,\frac{\pa F}{\pa\dot x},\frac{\pa F}{\pa t}\right)\nonumber\\
&=\big(x,\cos(x)\dot x+\sin(x)t,\sin(x)\dot x-\cos(x)t\big).
\end{align}
The matrix
$$\begin{bmatrix}
\cos(x) & \ \sin(x)\\
\sin(x) & -\cos(x)\end{bmatrix}$$
is orthogonal for all $x\in\R$, so map (\ref{hr}) is a diffeomorphism.
\end{example}
\begin{remark}
The problem of the existence of hyperregular Lagrangians/Ha\\miltonians for an arbitrary $L$ seems to be difficult and substantially extends the purposes of this paper, so we postpone it to a further development of the theory.
\end{remark}

\section{The contact Tulczyjew triples}\label{sec:trip}
Starting with a line bundle $L\to M$, and combining the Hamiltonian and the Lagrangian parts described in the last two sections, we obtain a contact Tulczyjew triple. The geometric part, which comes from the reduction of the classical triple for the manifold $\Lt$ by the $\Rt$-actions, consists, in the top line, of contactomorphism which are simultaneously isomorphisms of double vector bundles:
\be\xymatrix@C-20pt@R-5pt{
 &\sJ^1L^*_{\sT^*\Lt}
\ar[ddl] \ar[dr]
 &  &  & \sA \sT^\ast \Lt \ar[lll]_{\zb_0}\ar[rrr]^{\za_0}\ar[ddl] \ar[dr]
 &  &  & \sJ^1L^*_{\sT\Lt} \ar[ddl] \ar[dr]
 & \\
 & & \sA\Lt \ar[ddl]
 & & & \sA\Lt\ar[rrr]\ar[lll]\ar[ddl]
 & & & \sA\Lt\ar[ddl]
 \\
 \sJ^1L^* \ar[dr]
 & & & \sJ^1L^*\ar[dr]\ar[rrr]\ar[lll]
 & & & \sJ^1L^*\ar[dr]
 & & \\
 & M
 & & & M \ar[rrr]\ar[lll]& & & M &
}\label{gTt}\ee

\medskip\noindent The composition
$$\zb_0\circ\za_0^{-1}:\sJ^1L^*_{\sT\Lt}\;\to\;\sJ^1L^*_{\sT^*\Lt}$$
is a double vector bundle isomorphism which is the contact analog of
the Tulczyjew isomorphism (\ref{ci}).
Much more important for applications to mechanics is the derived dynamical part:
\be\xymatrix@C-20pt@R-5pt{
 &\sJ^1L^*_{\sT^*\Lt}\ar[rrr]^{\zb}
\ar[ddl] \ar[dr]
 &  &  & \sT\sJ^1L^* \ar[ddl] \ar[dr]
 &  &  & \sJ^1L^*_{\sT\Lt} \ar[lll]_{\za}\ar[ddl] \ar[dr]
 & \\
 & & \sA\Lt \ar[ddl]\ar[rrr]
 & & & \sT M\ar[ddl]
 & & & \sA\Lt\ar[ddl]\ar[lll]
 \\
 \sJ^1L^* \ar[dr]\ar[rrr]
 & & & \sJ^1L^*\ar[dr]
 & & & \sJ^1L^*\ar[dr]\ar[lll]
 & & \\
 & M\ar[rrr]
 & & & M & & & M \ar[lll]&
}\label{dTt}\ee
The middle double vector bundle, $\sT\sJ^1L^*$, is the tangent bundle of the contact phase space $\sJ^1L^*$ whose subsets are interpreted as implicit dynamics on the contact phase space, but this time the maps $\za,\zb$ are no longer diffeomorphisms. Note that the contact phase space carries a canonical contact structure, but $\sT\sJ^1L^*$ is not a contact manifold, since it is even-dimensional. The left-hand side of the triple corresponds to the Hamiltonian formalism, while the right-hand side corresponds to the Lagrangian formalism. Let us recall the canonical isomorphisms (\ref{e:21})
$$L^\ast_{\sT^\ast L^\times}\simeq \sJ^1L^\ast\times_M L^\ast\quad\text{and}\quad  L^\ast_{\sT L^\times}\simeq \sJ^1L^\ast\times_M L^\ast.$$

\medskip\noindent
In this picture, Hamiltonians are sections of the line bundle
$$L^\ast_{\sT^\ast L^\times}\simeq\sJ^1L^\ast\times_M L^\ast\longrightarrow \sJ^1L^\ast,$$
or, equivalently, maps $\mh: \sJ^1L^\ast\longrightarrow L^\ast$ over the identity on $M$. For a given Hamiltonian $\mh$ the dynamics is the image of the contact Hamiltonian vector field, and can be written as $$\mathcal{D}_{\mh}=\beta\left(\sj^1 {\mh}(\sJ^1L^\ast)\right)\subset\sT\sJ^1L^\ast.$$
Lagrangians are sections of the line bundle
$$L^\ast_{\sT L^\times}\simeq\sA L^\times\times_ML^\ast\rightarrow \sA L^\times,$$
or, equivalently, maps $\ell: \sA L^\times\rightarrow L^\ast$ over the identity on $M$. The Lagrangian dynamics is obtained directly from a Lagrangian by means of the map $\alpha$, namely $$\mathcal{D}_\ell=\alpha\left(\sj^1 \ell(\sA L^\times)\right).$$
Note that $\sj^1 \mh(\sJ^1L^\ast)$ and $\sj^1 \ell(\sA L^\times)$ are Legendre submanifolds
in the contact manifolds $\sJ^1L^*_{\sT^*\Lt}$ and $\sJ^1L^*_{\sT\Lt}$, respectively.

The purpose of the classical Tulczyjew triple was to deal with singular Lagrangians.  More precisely, to provide tools for obtaining the phase dynamic for Lagrangians for which there is no Hamiltonian, understood as a single function on the phase space. In such cases, dynamics is not the image of a Hamiltonian vector field but a more general Lagrangian submanifold. This submanifold serves as an implicit differential equation for phase trajectories. Here, we have exactly the same possibility to consider Lagrangians which are singular.

\subsection{The contact Legendre transformation}
In analytical mechanics, the Legendre transformation is a procedure of passing from a Lagrangian description of a mechanical system to a Hamiltonian description of this system. This process is associated with the symplectic structures of the Hamiltonian and Lagrangian sides of the classical Tulczyjew triple. In the classical case, with the configuration space being $\sT M$ and the phase space being $\sT^\ast M$, a Lagrangian function (or a more general generating object) generates a Lagrangian submanifold $\mathcal{D}_L$ of $\sT^\ast\sT M$. This submanifold can be mapped by $\alpha_M^{-1}$ to $\sT\sT^\ast M$, producing the dynamics $\mathcal{D}=\alpha_M^{-1}(\mathcal{D}_L)$. We can ask if there exists a Hamiltonian generating object $H$ for the same dynamics, more precisely for a Lagrangian submanifold $\mathcal{D}_H$ of $\sT^\ast\sT^\ast M$, such that $\mathcal{D}=\beta_M^{-1}(\mathcal{D}_H)$. If we allow for families of function as generating objects for Hamiltonian dynamics, then the answer is `yes'. There is also a universal formula for such a generating family: for a given Lagrangian $L:\sT M\rightarrow M$ we take
$$H:\sT M\times_M\sT^\ast M\longrightarrow \R, \quad H(v,p)=\langle p,v\rangle - L(v).$$
It is a family of functions on the phase space parameterized by elements of $\sT M$. In many cases, this family can be simplified. For hyperregular Lagrangians, it can be simplified to just one function on the phase space.
The Legendre transformation can also be performed from Hamiltonian to Lagrangian generating objects. For a given Hamiltonian function $H$, we can take the Lagrangian generating family of the form
$$L:\sT^\ast M\times_M\sT M\longrightarrow \R, \quad L(p,v)=\langle p,v\rangle - H(p),$$
and simplify it if possible.

The fundamental observation that a contact structure is a homogeneous symplectic structure on an $\Rt$-principal bundle allows us to perform the Legendre transformation for contact mechanical systems completely analogously. It is easy to see that, if we start from a 1-homogeneous Hamiltonian $H$ on $\sT^\ast L^\times$, we get a 1-homogeneous Lagrangian generating object. Indeed, if $H(({\dts}k)_s(p))=sH(p)$, then
\begin{align*}
L(({\dts}k)_s(p),({\dt}k)_s(v))&=\langle ({\dts}k)_s(p),({\dt}k)_s(v))\rangle - H(({\dts}k)_s(p))\\
&=\left\langle s\sT^\ast k_{s^{-1}}(p),\sT k_s(v)\right\rangle -sH(p)\\
&=
s\left(\langle p,v\rangle-H(p)\right)=sL(p,v).
\end{align*}
We can also work with sections of line bundles instead of homogeneous functions, and with Legendre submanifolds instead of Lagrangian homogeneous submanifolds.
The following theorem describes conditions under which a single Lagrangian and a single Hamiltonian determine the same dynamics. This is completely analogous to those in classical mechanics.
\begin{theorem}
Let $\mh\in\Sec(L^\ast_{\sT^\ast L^\times})$ be a Hamiltonian section and $\ell\in\Sec\\(L^\ast_{\sT L^\times})$ be a Lagrangian section. Then $\cD_\mh=\cD_\ell$ if and only if $\ell$ is hyperregular (equivalently, $\mh$ is hyperregular) and
\be\label{cDmh}\mh(u)=\zP\left(u,\zl^{-1}_\ell(u)\right)-(\ell\circ\zl^{-1}_\ell)(u);\ee
equivalently,
$$\ell(v)=\zP\left(\zl^{-1}_\mh(v),v\right)+(\mh\circ\zl^{-1}_\mh)(u),$$
where $\zl_\mh=\zl^{-1}_\ell$.
\end{theorem}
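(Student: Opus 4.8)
The plan is to prove the two implications separately, relying on Proposition \ref{pr1} and Corollary \ref{c1} for the explicit Legendre formulae and on the coordinate descriptions (\ref{e:2}) and (\ref{ld}) of the two dynamics. The guiding observation is that the Hamiltonian dynamics $\cD_\mh=\zb(\sj^1\mh(\sJ^1L^*))$ is always the image of the contact Hamiltonian vector field $X_\mh$, hence the graph of a section of the tangent projection $\tau_{\sJ^1L^*}\colon\sT\sJ^1L^*\to\sJ^1L^*$, whereas the Lagrangian dynamics $\cD_\ell=\za(\sj^1\ell(\sA\Lt))$ is a priori only a parametrized subset.

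For the implication ($\Leftarrow$) I would assume $\ell$ hyperregular and $\mh$ given by (\ref{cDmh}). By Proposition \ref{pr1} this gives $\zl_\mh=\zl_\ell^{-1}$ together with the usual Legendre identities $\partial\mh/\partial p_k=\dot x^k$, $\partial\mh/\partial z=t$ and $\partial\mh/\partial x^l=-\partial\ell/\partial x^l$, where $(\dot x,t)=\zl_\ell^{-1}(x,p,z)$ and $p_k=\partial\ell/\partial\dot x^k$, $z=\partial\ell/\partial t$. Substituting these into the three components of (\ref{e:2}) and comparing with (\ref{ld}) matches all equations: the $\partial_{x^i}$-component gives $\dot x^k=\partial\mh/\partial p_k$; the $\partial_{p_l}$-component gives $\dot p_l=\partial\ell/\partial x^l-t\,\partial\ell/\partial\dot x^l$; and the $\partial_z$-component evaluates as $p_k\partial\mh/\partial p_k-\mh=\ell-t\,\partial\ell/\partial t$. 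Hence $\cD_\mh=\cD_\ell$. This is exactly the classical computation, so I would state it and omit the routine algebra, just as in Proposition \ref{pr1}.

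For the converse ($\Rightarrow$), suppose $\cD_\mh=\cD_\ell$. Since $\cD_\mh$ is the graph of $X_\mh$, the restriction of $\tau_{\sJ^1L^*}$ to it is a diffeomorphism onto $\sJ^1L^*$. By the coordinate form of $\za$, the $\sJ^1L^*$-component of $\za\circ\sj^1\ell$ is precisely the Legendre map $\zl_\ell$, so $\zl_\ell=\tau_{\sJ^1L^*}\circ(\za\circ\sj^1\ell)$. Equality of the dynamics then forces $\cD_\ell$ to be a section of $\tau_{\sJ^1L^*}$ as well, which is possible only if $\zl_\ell$ is a bijection onto $\sJ^1L^*$, i.e.\ $\ell$ is hyperregular. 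Once this is known, Proposition \ref{pr1} produces a hyperregular Hamiltonian $\tilde\mh$ via (\ref{cDmh}), and the already-proved direction gives $\cD_{\tilde\mh}=\cD_\ell=\cD_\mh$. Finally, reading the $\partial_{x^i}$- and $\partial_z$-components of (\ref{e:2}) shows that a contact Hamiltonian is recovered from its vector field by $\mh=p_k\dot x^k-\dot z$; therefore $\cD_\mh=\cD_{\tilde\mh}$ yields $\mh=\tilde\mh$, which is (\ref{cDmh}), and Corollary \ref{c1} gives the equivalent Lagrangian formula together with $\zl_\mh=\zl_\ell^{-1}$.

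The step I expect to be the main obstacle is the assertion in ($\Rightarrow$) that $\cD_\mh=\cD_\ell$ forces $\zl_\ell$ to be a diffeomorphism. Surjectivity of $\zl_\ell$ is immediate from $\tau_{\sJ^1L^*}(\cD_\mh)=\sJ^1L^*$, but injectivity is delicate: if $\zl_\ell(x,\dot x_1,t_1)=\zl_\ell(x,\dot x_2,t_2)=u$, then the two image points of $\za\circ\sj^1\ell$ lie over the same $u$, and since $\cD_\mh$ meets each $\tau_{\sJ^1L^*}$-fibre in one point they must coincide; the $\dot x^k$-coordinates of (\ref{ld}) record the velocity directly and yield $\dot x_1=\dot x_2$, after which $t_1=t_2$ has to be extracted from the remaining fibre coordinates, and the bijection upgraded to a diffeomorphism by a dimension count and the inverse function theorem. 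This is where the real content sits, mirroring the classical fact that a Hamiltonian given by a single function on the whole phase space corresponds precisely to a hyperregular Lagrangian.
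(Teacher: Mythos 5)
Your plan has the right architecture, and its completed parts match the paper: the ($\Leftarrow$) direction is the same coordinate verification the paper performs (the paper reduces it to the single unchecked equation $-\partial\mh/\partial x^l=\partial\ell/\partial x^l$), and recovering the Hamiltonian from its dynamics via $\mh=p_k\dot x^k-\dot z$ is a clean variant of the paper's derivation of (\ref{cDmh}) from the $\dot z$-component of (\ref{mhe}). The genuine gap is the step you yourself flag and then leave open: that $\cD_\mh=\cD_\ell$ forces $\zl_\ell$ to be a diffeomorphism. Surjectivity and $\dot x_1=\dot x_2$ are fine, but your sketched route to $t_1=t_2$ and to smooth invertibility would fail as described. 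The remaining fibre coordinates do \emph{not} determine $t$ pointwise: comparing the $\dot p_l$-entries of the two preimages gives $\frac{\partial\ell}{\partial x^l}(x,\dot x,t_1)-t_1p_l=\frac{\partial\ell}{\partial x^l}(x,\dot x,t_2)-t_2p_l$, where $\partial\ell/\partial x^l$ is evaluated at two \emph{different} values of $t$, so nothing cancels; likewise the $\dot z$-entries give $\ell(x,\dot x,t_1)-t_1z=\ell(x,\dot x,t_2)-t_2z$, and all of these conditions (together with $\partial\ell/\partial t=z$ at both points) can hold with $t_1\neq t_2$, for instance when $\ell$ is affine in $t$ over an interval. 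What rules such behaviour out is not the fibre data but the smoothness of $\mh$ used globally: from the $\dot x$- and $\dot z$-components one first obtains, without any injectivity, the identity $\mh\circ\zl_\ell=\zP\left(\zl_\ell(\cdot),\cdot\right)-\ell$ on all of $\sA\Lt$; differentiating it in $t$ and in $\dot x^j$ and using $\frac{\partial\mh}{\partial p_k}\circ\zl_\ell=\dot x^k$ yields $\left(\frac{\partial\mh}{\partial z}\circ\zl_\ell-t\right)\frac{\partial^2\ell}{\partial t\,\partial\dot x^j}=0$ and $\left(\frac{\partial\mh}{\partial z}\circ\zl_\ell-t\right)\frac{\partial^2\ell}{\partial t^2}=0$, so $t$ is recovered only off the locus where the whole $t$-column of the vertical Hessian of $\ell$ degenerates, and that locus still has to be dealt with separately. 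Moreover, ``bijection $+$ dimension count $+$ inverse function theorem'' does not produce a diffeomorphism: a smooth bijection can have degenerate differential (think of $t\mapsto t^3$), and invertibility of the differential of $\zl_\ell$ --- the vertical Hessian of $\ell$ --- is precisely what must be established, again from the smoothness of $\mh$.

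For comparison, the paper never attempts your set-theoretic argument: it interprets $\cD_\mh=\cD_\ell$ directly as the coordinate identity (\ref{mhe}), i.e.\ as a matching of the two parametrizations, from which $(x^i,p_j,z)=\zl_\ell(x^i,\dot x^j,t)$ and hence hyperregularity are read off at once, and (\ref{cDmh}) then follows from the last component together with Proposition \ref{pr1} and Corollary \ref{c1}. Your reading of the hypothesis as an equality of subsets of $\sT\sJ^1L^*$ is the more scrupulous one, but it is exactly what generates the hard step above, and declaring that step to be ``where the real content sits'' without carrying it out leaves the forward implication unproved.
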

\begin{proof}
According to (\ref{hd}) and (\ref{ld}), the identification of dynamics  $\cD_\mh=\cD_\ell$ is equivalent to the system of PDEs, which in coordinates reads
\begin{align}\label{mhe}
&\left(x^i,\, p_j,\,z,\, \frac{\partial \mh}{\partial p_k},\, -\frac{\partial \mh}{\partial x^l}-\frac{\partial \mh}{\partial z}p_l,\, p_n\frac{\partial \mh}{\partial p_n}-\mh\right)\\
&\quad=\left(x^i,\frac{\partial\ell}{\partial \dot x^l},\frac{\partial\ell}{\partial t},\dot x^k,
\frac{\partial\ell}{\partial x^l}-t\frac{\partial\ell}{\partial \dot x^l},
\ell-t\frac{\partial\ell}{\partial t}\right).\nonumber
\end{align}
This immediately implies that
$$(x^i, p_j,z)=\zl_\ell(x^i,\dot x^j,t),
$$
so $\zl_\ell$ is a diffeomorphism and $\ell$ is hyperregular. The equation
$$p_n\frac{\partial \mh}{\partial p_n}-\mh=\ell-t\frac{\partial\ell}{\partial t}$$
implies
$$\mh(x,p,z)=p_n\dot x^n+tz-\ell(x,\dot x,t),$$
where we identify $(x^i, p_j,z)$ with $(x^i,\dot x^j,t)$ \emph{via} $\zl_\ell$. This is exactly (\ref{cDmh}). According to Proposition \ref{pr1}, $\mh$ is hyperregular and $\zl_\mh=\zl^{-1}_\ell$.
The rest follows directly from Corollary \ref{c1}.

Conversely, if we know that $\mh$ and $\ell$ are hyperregular and related by (\ref{cDmh}), then
the only not checked equation in (\ref{mhe}) is
$$-\frac{\partial \mh}{\partial x^l}-\frac{\partial \mh}{\partial z}p_l=\frac{\partial\ell}{\partial x^l}-t\frac{\partial\ell}{\partial \dot x^l},$$
which reduces to
$$-\frac{\partial \mh}{\partial x^l}(x,p,z)=\frac{\partial\ell}{\partial x^l}(x,\dot x,t).$$
The latter follows easily by direct calculations.
\end{proof}
\subsection{A comparison with other concepts of contact Tulczyjew triples}

We have constructed a contact Tulczyjew triple starting from the classical Tulczyjew triple for the manifold $P=L^\times$, and dividing the diagram (\ref{zb}) by the action of $\R^\times$. As a result, we have obtained the diagram (\ref{gTt}) and its dynamical consequence (\ref{dTt}). In the case when $L^\times$ is the trivial bundle $L^\times=M\times \R^\times$, one can propose an alternative version of the Lagrangian side of the triple which was first constructed in \cite{Esen:2021}. Let us consider the following principal $\R^\times$ bundle,
$$\sT M\times\sT^\ast\R^\times\simeq \sT M\times\R^\times\times\R^\ast\longrightarrow \sT M\times\R^\ast,\qquad
(v,t,z)\longmapsto (v,z)$$
with the $\Rt$-action
$$\chi_s (v,t,z)=(v,st,z).$$
The phase lift ${\dts}\chi$ acts on $\sT^\ast(\sT M\times\sT^\ast\R^\times)\simeq \sT^\ast\sT M\times\sT^\ast\sT^\ast\R^\times$ by
$$ ({\dts}\chi)_s(\varphi,t,z,f_t,f_z)=(s\varphi, st, z,  f_t, sf_z),$$
where $(t,z,f_t,f_z)$ is an element of
$$\sT^\ast\sT^\ast\R^\times\simeq \R^\times\times\R^\ast\times\R^\ast\times \R.$$
The canonical symplectic structure on
$\sT^\ast(\sT M\times\sT^\ast\R^\times)$ reads
$$\omega_{\sT M}+\xd f_t\wedge\xd t+\xd f_z\wedge \xd z$$
and is clearly 1-homogeneous with respect to ${\dts}\chi$. This means that there is a contact structure on the base of the principal $\R^\times$-bundle $\sT^\ast(\sT M\times\sT^\ast\R^\times)$ which, as previously, can be identified with the space of first jets of some line bundle. To identify this bundle, let us note that 1-homogeneous functions on $\sT M\times\sT^\ast\R^\times$ are of the form
$$(v,t,z)\longmapsto tF(v,z)$$
for some $F:\sT M\times\R^\ast\rightarrow \R$. Such functions correspond to sections of the trivial line bundle $\sT M\times\R^\ast\times\R\to \sT M\times\R^\ast$, which can be more conveniently expressed as functions on $\sT M\times\R^\ast$. The homogeneous function $(v,t,z)\mapsto tF(v,z)$  corresponds to the function $(v,z)\mapsto F(v,z)$.
Since $\sT M\times\R^\ast\times\R\to \sT M\times\R^\ast$ is trivial, the space of first jets of its sections is of the form
$$\sJ^1(\sT M\times\R^\ast\times\R)=\sT^\ast(\sT M\times\R^\ast)\times \R\simeq \sT^\ast\sT^\ast M\times \R^\ast\times \R\times\R,$$
where the first jet of $F$ is
$$\left(\xd_{\sT M}F(v),z, \frac{\partial F}{\partial z}, F\right).$$
This space serves as an alternative Lagrangian side of the contact Tulczyjew triple in the trivial case. Using coordinates $(x^i, \dot x^i, b_{x^j}, b_{\dot x^j}; z,u, w)$ in $\sT^\ast\sT^\ast M\times \R^\ast\times \R\times\R$, we have then
$$b_{x^i}=\frac{\partial F}{\partial x^i}, \quad b_{\dot x^j}=\frac{\partial F}{\partial x^j}, \quad u=\frac{\partial F}{\partial z}, \quad w=F(x^i,\dot x^j, z).$$

To finish the construction we shall need an alternative version of $\alpha$, which we will denote $\bar\alpha$. Note that $\sT^\ast(\sT M\times\sT^\ast\R^\times)\simeq \sT^\ast\sT M\times\sT^\ast\sT^\ast\R^\times$ is isomorphic, as a double vector bundle as well as a symplectic $\Rt$-principal bundle to
$$\sT\sT^\ast(M\times\R^\times)\simeq \sT\sT^\ast M \times \sT\sT^\ast\R^\times$$
by means of $\alpha_M^{-1}\times \beta_{\R^\times}^{-1}$. From the latter we have the projection onto $\sA\sT^\ast(M\times \R^\times)$ and further, by means of the anchor, onto $\sT\sJ^1(M\times \R^\ast)$,
\begin{small}
$$\xymatrix@C+2pt@R+2pt{
\sT\sT^\ast M \times \sT\sT^\ast\R^\times \simeq \sT\sT^\ast(M\times\R^\times)\ar[d] &
\sT^\ast(\sT M\times\sT^\ast\R^\times)\simeq \sT^\ast\sT M\times\sT^\ast\sT^\ast\R^\times\ar[l]_{\alpha_M^{-1}\times \beta_{\R^\times}^{-1}}\ar[d] \\
\sT\sT^\ast M\times\R^\ast\times \R\times\R^\ast\simeq\sA\sT^\ast(M\times \R^\times)\ar[d] & \sJ^1(\sT M\times\R^\ast\times\R)\simeq\sT^\ast(\sT M\times\R^\ast)\times \R\ar[l]_{\bar\alpha_0}\ar[dl]_{\bar\alpha} \\
\sT\sT^\ast M\times\R^\ast\times\R^\ast\simeq\sT\sJ^1(M\times \R^\ast) & }.$$
\end{small}
In coordinates the map $\bar\alpha$ reads
$$\bar\alpha(x^i, \dot x^i, b_{x^j}, b_{\dot x^j}; z,u, w)=(x^i,  b_{\dot x^j}, \dot x^j, b_{ x^j}-ub_{\dot x^j}, z,-w).$$
We can write it alternatively as
$$(x,p,\dot x, \dot p, z, \dot z)\circ\bar\za=(x^i,  b_{\dot x^j}, \dot x^j, b_{x^j}-ub_{\dot x^j}, z,-w).$$
Applying $\bar\alpha$ to the first jet of $F$, we get
$$(x,p,\dot x, \dot p, z, \dot z)\circ \bar\alpha(\sj^1 F)=\left(x^i,\frac{\partial F}{\partial \dot x}, \frac{\partial F}{\partial x}- \frac{\partial F}{\partial z}\frac{\partial F}{\partial \dot x}, z, -F\right).$$
This leads to second-order equations of the form
$$\frac{\xd}{\xd s}\left(\frac{\partial F}{\partial \dot x}\right)= \frac{\partial F}{\partial x}-\frac{\partial F}{\partial z}\frac{\partial F}{\partial \dot x},\qquad \dot z=-F.$$
Taking $\bar \ell=-F$ as an alternative Lagrangian, we get
$$\frac{d}{ds}\left(\frac{\partial \bar\ell}{\partial \dot x}\right)= \frac{\partial \bar\ell}{\partial x}+\frac{\partial \bar\ell}{\partial z}\frac{\partial {\bar\ell}}{\partial \dot x},\qquad \dot z=\bar\ell,$$
which are the so called \emph{Herglotz equations} (cf. \cite{deLeon:2021,Esen:2021,Herglotz:1930,Simoes:2020}. Note, however, that this version of contact Lagrangian mechanics works only for trivial contact structures.

\begin{example} The contact mechanics was proposed as a tool that is appropriate for dissipative mechanical systems. Let us consider a simple example of such a system. Let $L=M\times\R$ be a trivial line bundle, where $M$ is a Riemannian manifold with a metric $g$. With $G:\sT M\rightarrow \sT^\ast M$ we denote the isomorphism induced by $g$, and by $\Vert \cdot\Vert$ the corresponding norms in $\sT M$ and $\sT^*M$ (this should not lead to any confusion). Since $L$ is trivial, a Hamiltonian section is given by a function on $\sT^\ast M\times \R^\ast$. Let us consider (in the notation of Example \ref{ex:1}) the Hamiltonian
$$\mh(p,z)=\frac{\Vert p\Vert^2}{2m} +\lambda z= \mh_0(p)+\lambda z,$$
with $\mh_0$ denoting the Hamiltonian for a free motion,  and $\lambda$ being a positive constant. The first jet of $\mh$ reads then
$$\sj^1\mh=(\xd\mh_0(p), \mh(p,z),\lambda).$$
Using formula (\ref{e:14}), we get the dynamics
$$\mathcal{D}_{\mh}=\{(\beta_M^{-1}(\xd\mh_0(p))\oplus(-\lambda p), z, {\Vert p\Vert^2}-\mh):\;\; p\in\sT^\ast M, z\in\R \}.$$
Since the dynamics is generated by a single Hamiltonian, it is the image of the Hamiltonian vector field. In coordinates $(x^i, p_j, z)$ on $\sJ^1L^\ast\simeq \sT^\ast M\times \R^\ast$
it reads
$$X_\mh=\frac{1}{m}g^{ij}(x)p_j\frac{\partial}{\partial x^i}-\left(\frac{1}{2m}\frac{\partial g^{ij}}{\partial x^k}p_ip_j+\lambda p_k\right)\frac{\partial}{\partial p_k}+
\left(\frac{1}{2m}g^{ij}p_ip_j-\lambda z\right)\frac{\partial}{\partial z}.$$
The above vector field is projectable on $\sT^\ast M$ and the result is the Hamiltonian vector field for the free motion, corrected by the term $-\lambda p_k\frac{\partial}{\partial p_k}$, which is responsible for the additional force linearly depending on the momentum, e.g., a viscosity force.
\medskip

Applying the contact Legendre transformation, we get a family of functions on $\sT M\times \R$ parameterized by points of $\sT^\ast M \times\R$,
$$\sT^\ast M\times_M\sT M\times\R\times\R\ni(p,v,z,\tau)\longmapsto\langle p,v\rangle+z(\tau-\lambda)-\frac{{\Vert p\Vert^2}}{2m}\in\R.$$
This family can be simplified, since the vanishing of its differential in the direction of $\sT M$ gives the condition $v=\frac{1}{m}G^{-1}(p)$. The simplified family is parameterized by $z$ only,
$$\sT M\times\R\times\R\ni(v,\tau,z)\longmapsto\ell(v,\tau,z)=\ell_0(v)+z(\tau-\lambda)
=\frac{m{\Vert v\Vert^2}}{2} +z(\tau-\lambda),$$
where $\ell_0$ is the Lagrangian for the free motion. The above family generates the following Legendre submanifold in $\sJ^1L_{\sT P}$:
$$\sJ^1L_{\sT P}\simeq \sT^\ast\R\times\R\times\R\supset
\{(\xd\ell_0(v), \lambda, \ell_0(v), z)\,:\; v\in\sT M, z\in\R\}.$$
Applying $\alpha$, we get the dynamics expressed in a Lagrangian way, namely
\begin{align*}
&\sT(\sJ^1L_P)\simeq\sT\sT^\ast M\times \R\times\R\supset\mathcal{D}\\
&\qquad=
\big\{\alpha_M^{-1}\big(\xd\ell_0(v)\big)\oplus \big(-\lambda m G(v)), z, \ell_0(v)-\lambda z\big)\,:\; v\in \sT M, z\in\R\,  \big\}.
\end{align*}
In the adapted coordinates $(x^i, p_j,\dot x^k, \dot p_l, z,\dot z)$, we get
$$p_j=mg_{ij}\dot x^i,\qquad \dot p_k=\frac{m}{2}\frac{\partial g_{ij}}{\partial x^k}\dot x^i\dot x^j-\lambda mg_{ik}\dot x^i, \qquad \dot z=\frac{m}{2}g_{ij}\dot x^i\dot x^j-\lambda z.$$
From the above, we get immediately second-order equations for curves in $M\times \R$ in the form
$$\ddot x^i=-\Gamma^{i}_{jk}\dot x^j\dot x^k-\lambda \dot x^i, \qquad \dot z=\frac{m}{2}g_{ij}\dot x^i\dot x^j-\lambda z,$$
which are exactly the Herglotz equations for the Herglotz Lagrangian
$$\ell_H:\sT M\times\R\ni (v,z)\longmapsto \frac{m\Vert v\Vert^2}{2}-\lambda z\in\R.$$
\end{example}

\begin{example}
To construct a Hamiltonian for topologically nontrivial line bundles, let us go back to the first jet bundle $\sJ^1B$ of the M\"obius band $B\to S^1$ (cf. Example \ref{AB}).  It is equipped with two charts $\bar\cO$ and $\bar\cU$, and the corresponding coordinates which we denote here $(q,p,z)$ and $(q',p',z')$, respectively,  which transform as in (\ref{tr1}) and (\ref{tr2}):
\beas \bar{\psi}\circ\bar{\varphi}^{-1}(q,p,z)&=&(q,p,z)\quad\text{if}\quad q\in\Big]
\frac{\pi}{2}, \pi\Big[;\\
\bar{\psi}\circ\bar{\varphi}^{-1}(q,p,z)&=&(q+\pi,-p,-z)
\quad\text{if}\quad q\in\Big]0,\frac{\pi}{2}\Big[.
\eeas
Let us recall that the contact structure on $\sJ^1B$ is represented by the local contact forms
$\eta_\mathcal{O}=\xd z-p\,\xd q$ and $\eta_\mathcal{U}=\xd z'-p'\,\xd q'$. If $q\in\big]0,\frac{\pi}{2}\big[$, then we have $\eta_\mathcal{O}=-\eta_\mathcal{U}$, while for $q\in\big]\frac{\pi}{2},\pi\big[$ we have $\eta_\mathcal{O}=\eta_\mathcal{U}$, so this contact structure is not trivializable.

To use the Hamiltonian picture described in Section \ref{s4.1}, let us put $L=B^*$, so that Hamiltonians $\mh$ are represented by maps $\mh:\sJ^1B\to B$ covering the identity on $S^1$.
The naive `free' Hamiltonian $\mh$ represented in coordinates $(q,p,z)$ by the function $H_\cO(q,p,z)=p^2/2$
does not define such a map properly because, like in (\ref{F}), it should be
\be\label{cham}H_\cO(q+\pi,-p,-z)=-H_\cO(q,p,z)\ee
for $q\in]0,\frac{\zp}{2}[$. A simple example of a correctly defined (i.e., satisfying (\ref{cham})) Hamiltonian is $\mh$ represented by
\be\label{hex} H_\cO(q,p,z)=\cos(q)\frac{p^2}{2}+\zl z,\ee
but this Hamiltonian is not regular. Regular Hamiltonians do exist, as we have shown in Example \ref{regL}, however, they have a rather complicated form. In other words, topologically non-trivial `contact worlds' require much more advanced studies.

In any case, we can compute the contact vector field on $\sJ^1B$ corresponding to Hamiltonian (\ref{hex}) which locally reads (cf. (\ref{e:2}))
\be\label{dyn}X_\mh=\cos(q)p\pa_q+\Big(\sin(q)\frac{p^2}{2}-\zl p\Big)\pa_p+\Big(\cos(q)\frac{p^2}{2}-\zl z\Big)\pa_z.\ee
Of course, we also obtain correctly defined Hamiltonians by adding to $H_\cO$ from (\ref{hex}) any $\pi$-anti-symmetric `potential' $V(q)$, i.e., such that $V(q+\pi)=-V(q)$ (e.g., $V(q)=\sin(q)$).

Since the Hamiltonian given by (\ref{hex}) is not regular, there is no Lagrangian (i.e., a map $\sA (B^\ast)^\ti\rightarrow B$) generating dynamics (\ref{dyn}). There is however a Lagrangian generating object, namely the following map
$$\ell: \sA (B^\ast)^\ti\times_{S^1}\sJ^1B\ni(v,u)\longmapsto \Pi(v,u)-\mh(u)\in B,$$
which can be understood as a family of maps $\sA (B^\ast)^\ti\rightarrow B$ parameterized by elements of $\sJ^1B$. In the coordinates $(q,\dot q, t)$ in $\sA (B^\ast)^\ti$ (see Example \ref{regL}) and $(q,p,z)$ in $\sJ^1 B$, the generating family is given  by the function
\be\label{Lf}L_{\mathcal{O}}(q,\dot q,t,p,z)=p\dot q+tz-\cos(q)\frac{p^2}{2}-\lambda z.\ee
In expression (\ref{Lf}), $(p,z)$ are the parameters of the generating family.
\end{example}

\section{Concluding remarks}
We proposed in this paper an approach to the concept of a Tulczyjew triple for contact manifolds.
This approach is not an \emph{ad hoc} postulate but is canonically derived from the classical one.
This was possible because of changing the standard language of contact geometry into a symplectic one, in which contact structures on $M$ are understood as particular (homogeneous) symplectic structures on certain principal bundles $P\to M$ with the structure group $\Rt=\R\setminus\{ 0\}$ of multiplicative non-zero reals. This approach is valid for all possible (also non-trivializable) contact structures. Natural examples of such structures are the cotangent bundles $\sT^*\Lt$, where $\Lt$ is the $\Rt$-principal bundle consisting of nonzero vectors in a line bundle $L\to Q$, with the standard multiplication by nonzero reals and the canonical symplectic form. It turns out that the base of this principal bundle, $\sT^*\Lt/\Rt$, is canonically isomorphic to the bundle $\sJ^1L^*$ of first jets of sections of the dual line bundle $L^*\to Q$, which therefore carries a canonical contact structure. The contact manifolds $\sJ^1L^*$, which are canonically vector bundles over $Q$, are the only (up to contactomorphisms) linear contact structures, like the cotangent bundles $\sT^*Q$ are the only (up to symplectomorphisms) linear symplectic structures on vector bundles.

It is therefore not strange that contact manifolds $\sJ^1L^*$ play the r\^ole of phase spaces for contact Tulczyjew triples. We obtain (generally implicit) dynamics on $\sJ^1L^*$ from Hamiltonians or Lagrangians understood as sections of certain line bundles: over the phase space in the case of Hamiltonians, and over the so-called Atiyah algebroid $\sA\Lt$ for the $\Rt$-principal bundle $\Lt$ in the case of Lagrangians. The Legendre maps and Legendre transformations, as relating Hamiltonian and Lagrangian formalisms, have been naturally defined, together with a concept of hyperregularity of Hamiltonians or Lagrangians and some examples. As a by-product we have obtained canonical lifts of contact structures on $M$ to contact structures on the Atiyah algebroid $\sA P$ associated with an $\Rt$-principal bundle $P\to M$.

This is the first paper dealing seriously with the subject (in paper \cite{Esen:2021} some slightly different contact Tulczyjew triples are proposed, however, defined \emph{ad hoc} and only for trivial contact structures),
so there is still much work to be done in this direction, especially if examples with physics motivation and applications are concerned. Also, the question of the existence of hyperregular Lagrangians in the general case, as well as a proper understanding of `kinetic energy' (`free' Hamiltonians and Lagrangians) in topologically nontrivial cases, are interesting but probably complicated problems.






\small{\vskip1cm}

\small{\vskip1cm}

\noindent Katarzyna GRABOWSKA\\
Faculty of Physics \\
University of Warsaw\\
Pasteura 5,
02-093 Warszawa, Poland
\\Email: konieczn@fuw.edu.pl\\

\noindent Janusz GRABOWSKI\\ Institute of
Mathematics\\  Polish Academy of Sciences\\ \'Sniadeckich 8, 00-656 Warszawa, Poland
\\Email: jagrab@impan.pl \\
\end{document}